\documentclass[10pt,a4paper]{article}
\usepackage[utf8]{inputenc}
\usepackage[T1]{fontenc}
\usepackage[english]{babel}
\usepackage{authblk}
\usepackage[shortlabels]{enumitem}
\usepackage[top=2.5cm, bottom=2.5cm, left=2.5cm, right=2.5cm]{geometry}




\usepackage{amssymb,amsthm}
\usepackage{latexsym,amsmath,subfigure,tikz,siunitx}


\usepackage{latexsym}
\usepackage{subfigure,latexsym,amsthm}
\usepackage{siunitx}
\usepackage[babel]{csquotes}
\usepackage{amsmath}
\usepackage{forest}
\usepackage{cancel}
\usepackage{url}
\usepackage{algorithm,algpseudocode}
\usepackage[extra]{tipa}
\usepackage{pstricks}
\usepackage{pst-all}
\usepackage{pstricks-add}
\usepackage{pst-grad} 
\usepackage{pst-plot} 
\usepackage[space]{grffile} 
\usepackage{etoolbox} 

\usepackage{siunitx}
\usepackage[shortlabels]{enumitem}

\tolerance=9999
\emergencystretch=10pt
\hyphenpenalty=10000





\newcommand{\pa}{\partial}
\newcommand{\ep}{\epsilon}
\newcommand{\eps}{\varepsilon}

\newcommand{\lam}{\lambda}

\newcommand{\omg}{\omega}
\newcommand{\Omg}{\Omega}

\newcommand{\alp}{\alpha}
\newcommand{\lap}{\Delta}

\newcommand{\bke}[1]{\left( #1 \right)}
\newcommand{\bkc}[1]{\left< #1 \right>}

\newcommand{\abs}[1]{\left| #1 \right|}
\newcommand{\sinc}{\operatorname{sinc}}

\newcommand{\disp}{\displaystyle}

\newcommand{\xh}{\hat{x}}

\newcommand{\rmi}{\mathrm{i}}
\newcommand{\rme}{\mathrm{e}}
\newcommand{\rmd}{\mathrm{d}}

\newcommand{\inc}{\textrm{inc}}
\newcommand{\scat}{\textrm{scat}}

\newcommand{\calI}{\mathcal{I}}

\newcommand{\ds}{\displaystyle}

\newcommand{\eqnref}{\eqref}
\newcommand{\RR}{\mathbb{R}}

\DeclareMathOperator{\BesselJ}{J}

\DeclareMathOperator{\Struve}{S}

\makeatletter
\newcommand{\doublewidetilde}[1]{{%
		\mathpalette\double@widetilde{#1}%
}}
\newcommand{\double@widetilde}[2]{%
	\sbox\z@{$\m@th#1\widetilde{#2}$}%
	\ht\z@=.9\ht\z@
	\widetilde{\box\z@}%
}
\makeatother

\theoremstyle{plain}
\newtheorem{theorem}{Theorem}[section]

\newtheorem*{problem*}{Problem}

\theoremstyle{remark}
\newtheorem{remark}{Remark}[section]

\title{Monostatic sampling methods in limited-aperture configuration}
\author{Sangwoo Kang} 
\author{Mikyoung Lim}  

\affil{Department of Mathematical Sciences, Korea Advanced Institute of Science and Technology, Daejeon 34141, Korea}

\date{}

\providecommand{\keywords}[1]{\textbf{Keywords:} #1 }

\begin{document}
\maketitle
\begin{abstract}
		We present monostatic sampling methods for limited-aperture scattering problems in two dimensions. 
	The direct sampling method (DSM) is well known to provide a robust, stable, and fast numerical scheme for imaging inhomogeneities from multistatic measurements even with only one or two incident fields. 
	However, in practical applications, monostatic measurements in limited-aperture configuration are frequently encountered. 
	A monostatic sampling method (MSM) was studied in full-aperture configuration in recent literature.
	In this  paper, we develop MSM in limited-aperture configuration and derive an asymptotic formula of the corresponding indicator function. Based on the asymptotic formula, we then analyze the imaging performance of the proposed method depending on the range of measurement directions and the geometric, material properties of inhomogeneities. 
	Furthermore, we propose a modified numerical scheme with multi-frequency measurements that improve imaging performance, especially for small anomalies. Numerical simulations are presented to validate the analytical results.
\end{abstract}

\keywords{Helmholtz equation, Direct sampling method, Monostatic imaging, Limited aperture, Multiple frequencies, Bessel function}

	\section{Introduction}

Determining geometric characteristics of unknown inhomogeneities from the measurement of a scattered field is of great interest due to its potential applications, such as in biomedical imaging and radar imaging. A variety of inverse scattering algorithms have been developed, and they can be categorized into three classes following the classification in the survey paper \cite{2Dsurvey}: iterative, decomposition, and sampling methods. Among them, sampling methods allow us to non-iteratively retrieve the support of (possibly multiconnected) targets assuming no {\it a priori} information about the targets with low computational cost.   
A sampling method tests a region of interest with its associated indicator function; the indicator function blows up if a test location is in the support of inhomogeneities.
Various sampling schemes were proposed, including the MUltiple SIgnal Classification (MUSIC), the linear sampling method (LSM), the topological derivative, the Kirchhoff migration, the orthogonality sampling method (OSM), and the direct sampling method (DSM) \cite{2Dsurvey, survey_sampling}. 
The sampling methods show promising results in multistatic, full-aperture measurement configuration, but they may show inaccurate results in the case of limited measurement data \cite{Kang_DSM_mono-static,Park,Joh_MUSIC_CL}. In practical applications such as synthetic aperture radar \cite{zhang2015ofdm} and ground penetrable radar \cite{ConstructionAndBuildingMaterials-2016}, monostatic measurements in limited-aperture configuration are frequently encountered. In this paper, we propose monostatic sampling methods in a limited-aperture configuration in two dimensions by modifying the DSM. 

The main advantage of the DSM is that, unlike other sampling methods, one can localize the inhomogeneities even with only one or two incident fields. 
Furthermore, it does not require any additional operations, such as singular value decomposition, and it is highly tolerant of the noise of measured data \cite{dsm2d_farfield,Kang_3DDSM,PARK2018648}. 
Consequently, the DSM has been applied to various imaging modalities, such as impedance tomography \cite{DSM2D_tomography}, diffusive optical tomography \cite{DSM2D_diffusive_tomography}, radar imaging \cite{bektas2016direct}, and recovering moving potentials in heat equations \cite{DSM2D_time_heat}.
We refer the reader to \cite{Kang_mfDSM_limited} for the DSM in limited-aperture configuration. 
	We also refer the reader to \cite{ESM-Bayesian-LimitedAperture} for the sampling-type method with Bayesian approach in limited-aperture configuration.
In monostatic configuration, an intuitive indicator function of the DSM was proposed in \cite{bektas2016direct} without a theoretical explanation. Later in \cite{Kang_DSM_mono-static}, this indicator function was analyzed to verify the reason for the failure of the DSM  with monostatic measurements. Then, the so-called monostatic sampling method (MSM), which successfully employs the DSM in monostatic format, was developed.

In the present paper, we consider the MSM for limited-aperture problems. First, under a small volume and well-separated assumptions of inhomogeneities, the asymptotic structure of the MSM's indicator function is identified in terms of the Bessel functions of the first kind, where the asymptotic formula reveals the dependence of the MSM on the range of measurement direction and material, geometric chracteriestics of the inhomogeneities. We discuss the proper measurement angle condition for a successful imaging performance.
Second, we suggest a multi-frequency MSM, named MMSM, where using multiple frequencies is one of the traditional approaches to improve imaging performance. Following a path of derivation similar to that of the single frequency case, the asymptotic property of the MMSM is verified by using the Struve functions as well as the Bessel functions of the first kind. Based on the asymptotic analysis of the MSM and MMSM, we compare their imaging performance. It turns out that the MMSM is an improved version of the MSM for imaging small inhomogeneities in limited-aperture configuration. Furthermore, by considering the fact that multistatic measurements of one fixed incident field and monostatic measurements have similar amounts of information, we compare the proposed methods with the classical DSM (in limited-view configuration) developed in \cite{Kang_mfDSM_limited}.

This paper is organized as follows. In Section \ref{sec:preliminary}, we briefly explain the scattering problem in two dimensions and introduce the concept of the direct sampling method. In Section \ref{sec:3}, monostatic sampling methods in limited-aperture configuration with single- and multi-frequency measurements are proposed, and the asymptotic structures of the methods are identified. We also compare the MSM and the DSM in Section \ref{sec:4}. We exhibit numerical simulations to support our theoretical results in Section \ref{sec:5}. Conclusions and perspectives are summarized in Section \ref{sec:6}.


\section{Preliminary}\label{sec:preliminary}
\subsection{Scattering Problem}\label{sec:2}
We briefly review the direct scattering problem of dielectric inhomogeneities in two dimensions. 
Let a finite number of dielectric inhomogeneities, namely $\tau_m$ ($m=1,\dots,M$), be embedded in a homogeneous background medium. 
We assume that $\tau_m$ is given by $\tau_{m}=c_{m}+\alp_m D_m$, where $c_m$, $\alpha_m$ and $D_m$ indicate the center, size, and reference shape of $\tau_m$; Figure \ref{Location} shows an example of such a circular shape.
We denote by $\eps_0$ and $\mu_0$ the electric permittivity and the magnetic permeability of the background, respectively, and set $\eps_m$ and $\mu_m$ to be the parameters of $\tau_m$. 
For the purpose of simplicity, we only consider the non-magnetic inhomogeneities, that is, $\mu_{m}=\mu_{0}$ for all $m$ (other cases can be dealt with in a similar way).
We then define the piecewise constant dielectric permittivity $0<\eps(x)<\infty$ for $x\in\mathbb{R}^{2}$ as
\begin{equation*}
	\eps(x)=\left\{
	\begin{array}{cl}
		\ds	\eps_{0}\quad&\mbox{in}~\mathbb{R}^{2}\backslash \left(\cup_{m=1}^M \overline{\tau_m}\right),\\
		\ds	\eps_{m}\quad&\mbox{in}~\tau_{m}\ (m=1,\dots,M).
	\end{array}
	\right.
\end{equation*}

Fix an angular frequency $\omg>0$. The corresponding wavenumber and wavelength in the background medium  are $k=\omg\sqrt{\eps_{0}\mu_{0}}$ and $\lambda=2\pi/k$, respectively. 
We let the incident field $u_{\inc}$ be given by the plane wave $u_{\inc}(x,\vartheta;k) = \rme^{\rmi k \vartheta\cdot x}$ with a direction vector $\vartheta\in\mathbb{S}^{1}$. Then the direct scattering problem is to find the solution $u= u_{\inc} + u_{\scat}$ of the Helmholtz equation 
\begin{equation}\label{HelmholtzEquation}
	\lap u(x,\vartheta;k)+ \omega^2 \mu_0\eps(x)\, u(x,\vartheta;k) = 0
\end{equation}
such that the scattered field $u_{\scat}$ satisfies the Sommerfeld radiation condition
\begin{equation}\label{SmmerfeldRadiationCondition}
	\lim_{|x|\to\infty} \sqrt{|x|}\left(\frac{\pa u_{\scat}(x,\vartheta;k)}{\pa|x|} - \rmi k u_{\scat}(x,\vartheta;k)\right) =0
\end{equation}
uniformly in $\hat{x}=\frac{x}{|x|}\in\mathbb{S}^1$. 
Here, $\lap$ is the Laplacian in $x$. 
It is well known that the solution to \eqref{HelmholtzEquation}--\eqref{SmmerfeldRadiationCondition} exists, and that $u_{\scat}$ admits the asymptotic behavior
\begin{equation}\label{FarFieldPattern}
	u_{\scat}(x,\vartheta;k) = \frac{\rme^{\rmi k \vartheta\cdot x}}{\sqrt{|x|}}\left( u_{\infty}(\hat{x},\vartheta;k) + O\bke{\frac{1}{|x|}}\right)
\end{equation}
with the so-called {\it far-field pattern} $u_{\infty}(\hat{x},\vartheta;k)$ defined on $\mathbb{S}^1\times\mathbb{S}^1$ (for a fixed $k$). 

We assume that the inhomogeneities are small, i.e., 
$\alpha=\max\left\{\alp_m\,:\,m=1,\dots,M\right\}\ll\frac{\lam}{2}$,
and that they are well separated, meaning that for some positive constant $l$, 
$\mbox{dist}(\tau_m ,\tau_{m'})>l$ for $m\neq m'$.
Here, dist indicates the distance between the two inhomogeneities. Then, it holds that (see \cite{Ammari1})
\begin{equation}\label{FarFieldAsymptoticFormula}
	u_{\infty}(\hat{x},\vartheta;k) = \sum_{m=1}^{M}\alp_{m}^{2}|D_m|\,\frac{\eps_{m}-\eps_{0}}{\sqrt{\eps_{0}\mu_{0}}}\,\rme^{-\rmi k\hat{x}\cdot c_{m}}\rme^{\rmi k\vartheta\cdot c_m}
	+O(\alp^{3}).
\end{equation}

We mainly treat the direct and inverse scattering problem in monostatic configuration; that is, the data is measured via a single moving transducer (see Figure \ref{Location}). Hence, we have the far-field pattern only for $\hat{x}=-\vartheta$. We simplify the far-field pattern as a one-variable function $u_{\infty}(\hat{x};k)$. It then follows from \eqref{FarFieldAsymptoticFormula} that
\begin{equation}\label{MonoFarFieldAsymptoticFormula}
	u_{\infty}(\hat{x};k) = \sum_{m=1}^{M}\alp_{m}^{2}|D_m|\,\frac{\eps_{m}-\eps_{0}}{\sqrt{\eps_{0}\mu_{0}}}\,\rme^{-2\rmi k\hat{x}\cdot c_m}+O(\alp^{3}).
\end{equation}

\begin{figure}[t!]
	\centering
	\begin{minipage}[c][4.5cm][c]{.35\linewidth}\centering
		\includegraphics[width=0.5\textwidth]{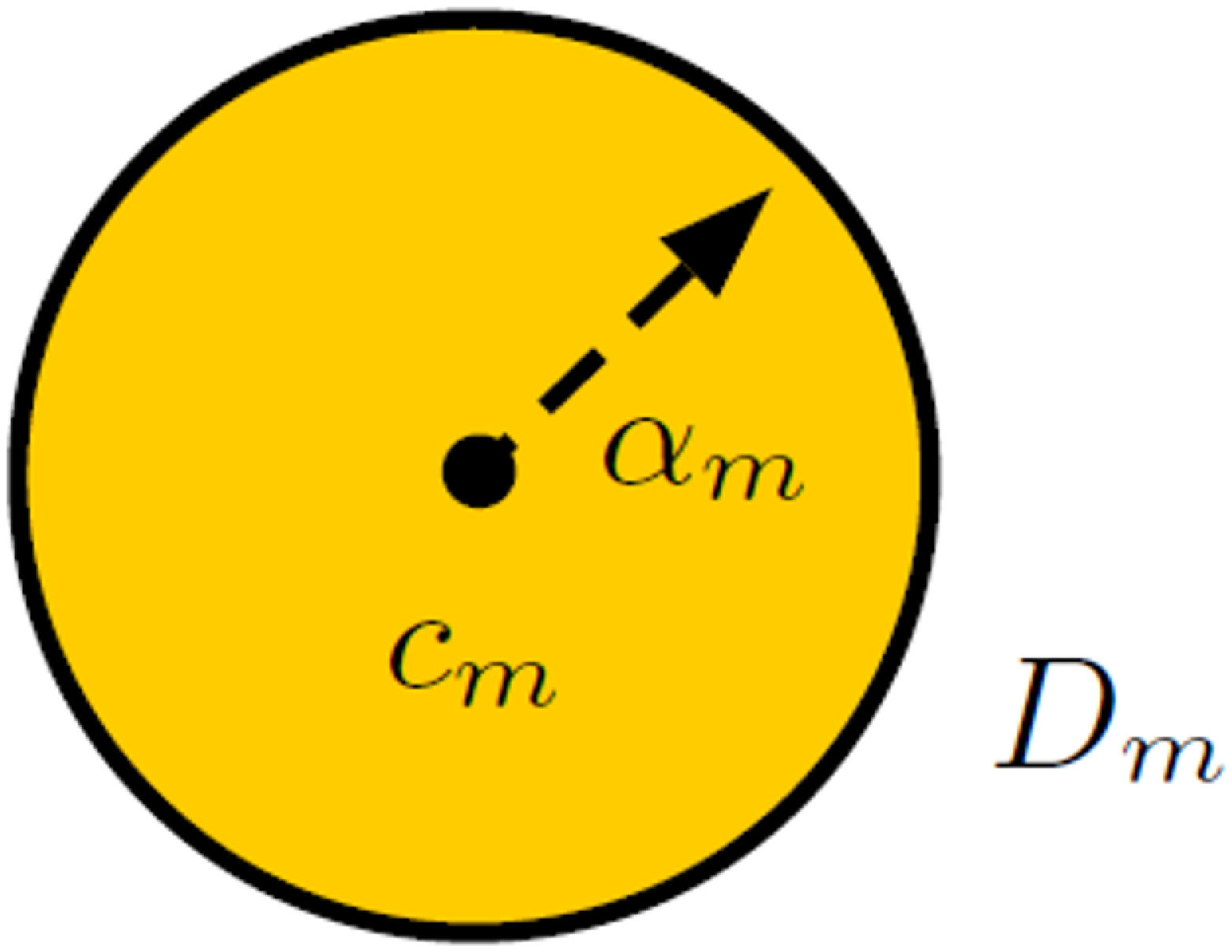}	
	\end{minipage}
	\begin{minipage}[c][4.5cm][c]{.45\linewidth}\centering
		\includegraphics[width=0.9\textwidth]{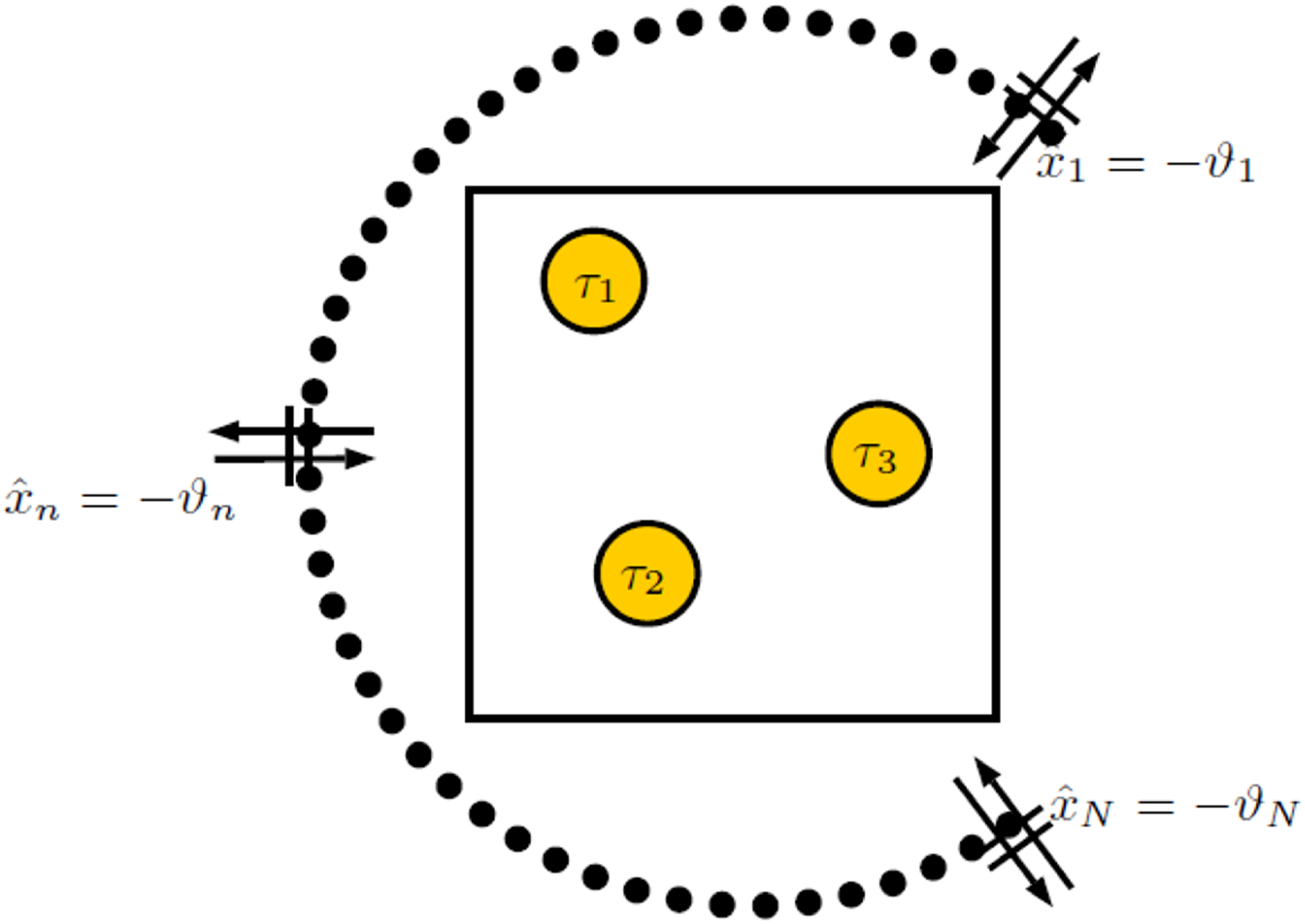}
	\end{minipage}
	\caption{Sketch of the inhomogeneity $\tau_{m}$ (left) and the scattering problem in monostatic configuration (right).}
	\label{Location}
\end{figure}

It is worth remarking that in monostatic configuration the multi-static response (MSR) matrix is diagonal (see Figure \ref{3matrices}). As a result, sampling methods based on the singular value decomposition, such as the MUSIC algorithm, linear sampling method, factorization method, and subspace migration, perform poorly in monostatic configuration. 

\begin{figure}[h!]
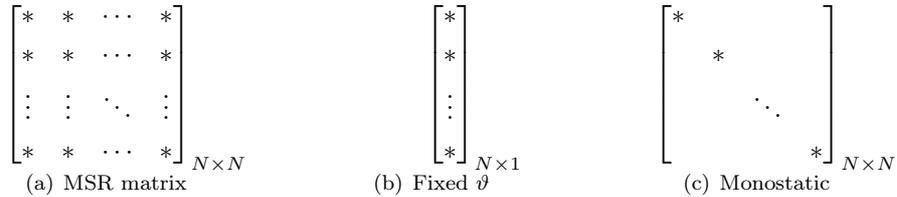

	\centering
	\subfigure[\label{MSR}MSR matrix]{
		$\ds \hskip 5mm
		\begin{bmatrix}
			\ds  * & * &\cdots &*\\[1mm]
			\ds *&* & \cdots &*\\[1mm]
			\vdots&\vdots &\ddots &\vdots \\[1mm]
			*& *& \cdots&\ds *  \end{bmatrix}_{N\times N}$
	}  \hskip 1cm
	\subfigure[\label{MSRSingle}Fixed $\vartheta$]{
		$\hskip 1cm
		\begin{bmatrix}
			*   \\[1mm]
			*  \\[1mm]
			\vdots  \\[1mm]
			* 
		\end{bmatrix}_{N\times1}
		$}
	\hskip 1cm
	\subfigure[\label{Mono}Monostatic]{
		$\ds \hskip 5mm
		\begin{bmatrix}
			\ds  * & & &\\[1mm]
			\ds &* &  &\\[1mm]
			& &\ddots & \\[1mm]
			& & &\ds *  \end{bmatrix}_{N\times N}$
	}  
	\caption{(a) shows the MSR matrix with $N$ incident waves and $N$ measurement directions; (b) is the measurement data with one incident wave; (c) indicates possible nonzero values of MSR matrix in monostatic configuration, which are diagonal entries.}
	\label{3matrices}
\end{figure}


\subsection{Direct Sampling Method}

For notational simplicity, we define a discrete $l^2(\mathbb{S}^1)$ type inner product as
\begin{equation}\label{innerproduct}
	\bkc{a(\xh_{n}),\, b(\xh_{n})}=\frac{1}{N}\sum_{n=1}^{N}a(\xh_{n})\,\overline{b(\xh_{n})}
\end{equation}
for given functions $a,b$ on $\mathbb{S}^1$ and $\hat{x}_n\in \mathbb{S}^1$. 

We first consider the classical DSM with one incident field. 
Let $\vartheta$ denote the direction of the incident field. Let the measurement data $u_\infty(\hat{x}_n,\vartheta;k),\  n=1,\dots,N,$ be given in full-aperture configuration (see Figure \ref{MSRSingle}). To highlight that the data is obtained for $\hat{x}_n$ in the full range $\mathbb{S}^1$, we denote by $\langle\cdot,\cdot\rangle_{l^2(\mathbb{S}^1)}$ for the inner product \eqref{innerproduct} with this measurement data. 
The indicator function of the classical DSM is given by (see \cite{dsm2d_farfield})
\begin{equation}\label{DSM}
	\calI_{\textrm{DSM}}(z,\vartheta;k): = \frac{\left|\bkc{u_{\infty}(\xh_{n},\vartheta;k),\, \rme^{-\rmi k\xh_n\cdot z}}_{l^2(\mathbb{S}^1)}\right|}{\disp\max_{z\in\Omega}\left|\bkc{u_{\infty}(\xh_{n},\vartheta;k),\, \rme^{-\rmi k\xh_n\cdot z}}_{l^2(\mathbb{S}^1)}\right|},
\end{equation}
where $z\in\RR^2$ is a point in a compact region $\Omega$, and $k,\vartheta$ are fixed. 
It was shown in \cite{Kang_2DDSM} by using \eqref{FarFieldAsymptoticFormula} and the property of the Bessel function that
\begin{equation}\label{DSM:asym}
	\calI_{\textrm{DSM}}(z,\vartheta;k)\approx\frac{\left|\mathfrak{F}_{\textrm{DSM}}(z,\vartheta;k)\right|}{\max_{z\in\Omega}\left|\mathfrak{F}_{\textrm{DSM}}(z,\vartheta;k)\right|}
\end{equation}
with 
\begin{equation}\label{DSMresultfull}
	\mathfrak{F}_{\textrm{DSM}}(z,\vartheta;k)=\sum_{m=1}^M \alpha_m^2 \left|D_m\right|(\eps_m-\eps_0) e^{\rmi k \vartheta\cdot c_{m}} \BesselJ_{0}\left(k|z-c_{m}|\right),
\end{equation}
where $\BesselJ_0$ denotes the Bessel function of the first kind of order zero. 
If multiple impinging waves with various propagation directions $\vartheta$ are used, then one can modify the indicator function as the maximum of $\calI_{\textrm{DSM}}(z,\vartheta;k)$ with respect to $\vartheta$ values.

\subsection{DSM in Monostatic Configuration}

In monostatic configuration, as explained previously, we assume that multiple impinging waves with various $\vartheta$ are used and that the resulting far-field pattern is obtained only for the measurement angle $-\vartheta$ (see Figure \ref{Mono}).
We define the far-field pattern in monostatic configuration as 
\begin{equation}\label{data:mono:x_n}
	u_\infty(\hat{x}_n;k):=u_\infty(\hat{x}_n,-\hat{x}_n;k), \quad n=1,\dots,N.
\end{equation}
If we use this data on the right-hand side of \eqref{DSM}, that is, $\vartheta$ is replaced by $-\hat{x}_n$, then the resulting indicator function admits the asymptotic relation similar to \eqref{DSM:asym} with (see \cite[Theorem 1]{Kang_DSM_mono-static})
\begin{equation}\label{DSMresultMono}
	{\mathfrak{F}}_{\textrm{DSM}}(z;k)=\sum_{m=1}^M \alpha_m^2 \left|D_m\right|(\eps_m-\eps_0) \BesselJ_{0}\left(k|z-2c_{m}|\right).
\end{equation}

Note that, while the basis functions of the main term of the DSM in full-aperture configuration in \eqref{DSMresultfull} are $\BesselJ_0(k|z - c_{m}|)$, those with monostatic measurement in \eqref{DSMresultMono} are $\BesselJ_0(k|z - 2c_{m}|)$. This causes the miss-localization phenomenon for the traditional DSM in monostatic configuration. 
This problem was fixed in \cite{Kang_DSM_mono-static} by developing the so-called monostatic sampling method (MSM)
\begin{equation}\label{MSM}
	\calI_{\textrm{MSM}}(z;k): = \frac{\left|\bkc{u_{\infty}(\xh_{n};k),\, \rme^{-2\rmi k\xh_n\cdot z}}_{l^2(\mathbb{S}^1)}\right|}{\disp\max_{z\in \Omg}\left|\bkc{u_{\infty}(\xh_{n};k),\, \rme^{-2\rmi k\xh_n\cdot z}}_{l^2(\mathbb{S}^1)}\right|},
\end{equation}
where this indicator function admits the asymptotic relation similar to \eqref{DSM:asym} with 
$$\mathfrak{F}_{\textrm{MSM}}(z;k)=\sum_{m=1}^M \alpha_m^2 \left|D_m\right|(\eps_m-\eps_0) \BesselJ_{0}\left(2k|z - c_{m}|\right).$$

\section{Single- and Multi-Frequency Monostatic Sampling Methods in Limited-Aperture Configuration}\label{sec:3}
In this section, we study single- and multi-frequency MSMs in limited-aperture configuration. This is one of the main subjects of this paper.
As in the previous section, we investigate sampling methods to recover the inhomogeneities $\tau_m$ in the asymptotic framework in which the inhomogeneities are small and well-separated and in which the far-field data are obtained from the monostatic measurements in limited-aperture configuration. On the other hand, the far-field patterns of scattered waves, which are in the form of \eqref{data:mono:x_n}, are now assumed to be known only for $\hat{x}_n= [\cos(\theta_n),\sin(\theta_n)]^T$ with $n=1,\dots,N$ restricted in a subset $\mathbb{S}_*^1$ of $\mathbb{S}^1$ given by ($2\pi$ is identified with $0$)
\begin{equation}\label{MeasureRange}
	\mathbb{S}^1_*=\left\{[\cos(\theta),\sin(\theta)]^T\,:\, \theta\in I\right\}\quad\mbox{with an interval }I \subsetneq [0,2\pi].
\end{equation}
We choose $\theta_n$ to form a set of equidistant points such that $I=[\theta_1,\theta_N]$. 
To highlight that the data is obtained for $\hat{x}_n$ in the restricted range $\mathbb{S}_*^1$, we denote by $\langle\cdot,\cdot\rangle_{l^2(\mathbb{S}_*^1)}$ for the inner product \eqref{innerproduct} with this measurement data and generalize the indicator function in \eqref{MSM} to the limited-aperture case as
	\begin{equation}\label{MSM2}
		\calI_{\textrm{MSM}}(z;k) = \frac{\left|\bkc{u_{\infty}(\xh_{n};k),\, \rme^{-2\rmi k\xh_n\cdot z}}_{l^2(\mathbb{S}^1_{*})}\right|}{\disp\max_{z\in \Omg}\left|\bkc{u_{\infty}(\xh_{n};k),\, \rme^{-2\rmi k\xh_n\cdot z}}_{l^2(\mathbb{S}^1_{*})}\right|}.
	\end{equation}

We denote by $\BesselJ_s(r)$ the Bessel function of the first kind of order $s$. To express the asymptotic structure of single- and multi-frequency MSMs in limited-aperture configuration, we define the following functions:
\begin{gather}\label{def:tilde:J}
	\widetilde{\BesselJ}_{s}(z;k_1,k_P):=\frac{1}{k_{P}-k_{1}}\int_{k_{1}}^{k_{P}}\BesselJ_{s}(k|z|)\, \rmd k,\quad s=0,1,\dots,\\ 
	\label{R:expan}
	R(z,k;\theta_1,\theta_N): = 2\sum_{s=1}^{\infty}{\rmi^{s}}\BesselJ_{s}(k| z|)\cos\bke{\frac{s(\theta_{N}+\theta_{1}-2\varphi(z))}{2}}\sinc\bke{\frac{s\left(\theta_{N}-\theta_{1}\right)}{2}},\\
	\widetilde{R}(z;\theta_1,\theta_N,k_1,k_P): = 2\sum_{s=1}^{\infty}{\rmi^{s}}\,\widetilde{\BesselJ}_{s}(z;k_1,k_P)\cos\bke{\frac{s(\theta_{N}+\theta_{1}-2\varphi(z))}{2}}\sinc\bke{\frac{s\left(\theta_{N}-\theta_{1}\right)}{2}}\label{tildeR:expan},
\end{gather}
where we denote by $\varphi(z)$ the angle of $z$ in polar coordinates for $z\in\mathbb{R}^2$, and $\sinc(t)=\frac{\sin{t}}{t}$.
It holds that 
\begin{gather*}\widetilde{\BesselJ}_{s}(2z;k_1,k_P)=\widetilde{\BesselJ}_{s}(z;2k_1,2k_P),\\
	\widetilde{R}(2z;\theta_1,\theta_N,k_1,k_P)=\widetilde{R}(z;\theta_1,\theta_N,2k_1,2k_P),\\
	\widetilde{R}(z;\theta_1,\theta_N,k_1,k_P) = \frac{1}{k_{P}-k_{1}}\int_{k_{1}}^{k_{P}} R(z,k;\theta_1,\theta_N)\,\rmd k.
\end{gather*}

	For a fixed $z\in\RR^2$ and $k>0$, it holds as shown in \cite[Theorem 4.1]{Park} that
	\begin{equation}\label{R:int:1}
		\frac{1}{\theta_N-\theta_1}\int_{\mathbb{S}^{1}_{*}} \rme^{\rmi k\xh\cdot z} \rmd S(\xh) =J_0(k|z|) +R(z,k;\theta_1,\theta_N),
	\end{equation}
	where $\mathbb{S}^{1}_{*}$ is given by \eqnref{MeasureRange} and $I=[\theta_1,\theta_N]$.
	For a better understanding of the reader, we briefly present the proof of \eqnref{R:int:1} provided in \cite{Park}. 
	From the Jacobi--Anger expansion
	\begin{equation}\notag
		e^{\rmi r\cos\theta}=J_0(r)+2\sum_{s=1}^\infty \rmi^s J_s(r)\cos(s\theta)\quad\mbox{for }r\geq 0,\end{equation}
	we have
	\begin{align}\notag
		\int_{\mathbb{S}^{1}_{*}} \rme^{\rmi k\xh\cdot z} \rmd S(\xh)
		&= \int_{\theta_1}^{\theta_N} e^{\rmi k|z|\cos(\theta-\varphi(z))}\,d\theta\\ \label{eqn:Jacobi_Anger}
		&=(\theta_N-\theta_1)J_0(k|z|) + 2\sum_{s=1}^\infty \rmi^s J_s(k|z|)\int_{\theta_1}^{\theta_N}\cos\left(s(\theta-\varphi(z)\right)\,d\theta.
	\end{align}
	It then follows \eqnref{R:int:1} by estimating the integration term in \eqnref{eqn:Jacobi_Anger}.

\subsection{Single Frequency Measurement}\label{subsec:single}
\begin{theorem}[Asymptotic structure of the MSM in limited-aperture configuration]\label{Thm1}
	For a fixed $k$, let the measurement data $\left\{u_\infty(\hat{x}_n;k)\,:\, n=1,\dots,N\right\}$ be given for $\hat{x}_n = [\cos(\theta_n),\sin(\theta_n)]^T\in\mathbb{S}^1_*$,  where $\mathbb{S}^1_*$ is given by \eqnref{MeasureRange} for a given interval $I$. For $z\in\RR^2$ in a test domain $\Omega$, we define $\calI_{\rm{MSM}}(z;k)$ as in \eqnref{MSM2}. Set the weights $w_{m}=\alp_{m}^{2}(\eps_{m}-\eps_{0})|D_{m}|$. Then, for a sufficiently large $N$, it holds that 
	\begin{equation}\label{MSMresult}
		\calI_{\rm{MSM}}( z;k)\approx 
		\frac{\left|\Phi_{\rm{MSM}}(z;k) + \Lambda_{\rm{MSM}}(z;k)\right|}{\disp\max_{z\in\Omega}\left|\Phi_{\rm{MSM}}(z;k) + \Lambda_{\rm{MSM}}(z;k)\right|}
	\end{equation}
	with
	\begin{gather*}
		\Phi_{\rm{MSM}}(z;k)=\sum_{m=1}^{M}w_{m}\BesselJ_{0}(2k| z- c_{m}|),\\
		\Lambda_{\rm{MSM}}(z;k)=\sum_{m=1}^{M}w_{m}R(z-c_m,2k\,;\,\theta_1,\theta_N).\end{gather*}
	
\end{theorem}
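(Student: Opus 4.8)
The plan is to follow the derivation template already established for the DSM in full-aperture configuration (cf.\ \eqref{DSM:asym}--\eqref{DSMresultfull}) and for the limited-aperture integral identity \eqref{R:int:1}, but with the monostatic data \eqref{MonoFarFieldAsymptoticFormula} and the doubled test exponential $\rme^{-2\rmi k\xh_n\cdot z}$. First I would substitute the asymptotic formula \eqref{MonoFarFieldAsymptoticFormula}, namely $u_\infty(\xh_n;k)=\sum_{m=1}^M w_m\,\rme^{-2\rmi k\xh_n\cdot c_m}+O(\alpha^3)$, into the numerator of \eqref{MSM2}. This turns the inner product into
\begin{equation*}
	\bkc{u_\infty(\xh_n;k),\,\rme^{-2\rmi k\xh_n\cdot z}}_{l^2(\mathbb{S}^1_*)}
	=\sum_{m=1}^M w_m\,\frac{1}{N}\sum_{n=1}^N \rme^{2\rmi k\xh_n\cdot(z-c_m)}+O(\alpha^3).
\end{equation*}

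Second, for $N$ sufficiently large the equidistributed Riemann sum $\frac1N\sum_{n=1}^N \rme^{2\rmi k\xh_n\cdot(z-c_m)}$ converges to the normalized arc integral $\frac{1}{\theta_N-\theta_1}\int_{\mathbb{S}^1_*}\rme^{2\rmi k\xh\cdot(z-c_m)}\,\rmd S(\xh)$; this is exactly the quantity controlled by \eqref{R:int:1}, applied with the point $z-c_m$ in place of $z$ and wavenumber $2k$ in place of $k$. By \eqref{R:int:1} this integral equals $\BesselJ_0(2k|z-c_m|)+R(z-c_m,2k;\theta_1,\theta_N)$. Summing against the weights $w_m$ gives precisely $\Phi_{\rm MSM}(z;k)+\Lambda_{\rm MSM}(z;k)$ up to the overall constant $(\theta_N-\theta_1)$ and the $O(\alpha^3)$ remainder. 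Since the same constant $(\theta_N-\theta_1)$ and the same order of remainder appear in the normalizing denominator $\max_{z\in\Omega}|\cdot|$, they cancel in the ratio, yielding \eqref{MSMresult}. The Jacobi--Anger expansion reproduced in the excerpt already gives the series form \eqref{R:expan} of $R$, so no new special-function identity is needed.

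The main obstacle, and the point deserving the most care, is making the two approximations --- the discrete-to-continuous passage ($\frac1N\sum\to\frac{1}{\theta_N-\theta_1}\int$) and the small-volume truncation ($O(\alpha^3)$) --- compatible with the normalization, so that ``$\approx$'' in \eqref{MSMresult} is justified uniformly for $z\in\Omega$. Concretely one must argue that the leading term $\Phi_{\rm MSM}+\Lambda_{\rm MSM}$ does not degenerate (i.e., its maximum over $\Omega$ is bounded below away from zero, which holds because $\BesselJ_0(0)=1$ forces a value of order $\sum_m|w_m|$ near $z=c_m$), so that dividing by the max does not amplify the error terms; and that the quadrature error is $o(1)$ as $N\to\infty$ uniformly in $z\in\Omega$ and $m$, which follows from smoothness of $\xh\mapsto\rme^{2\rmi k\xh\cdot(z-c_m)}$ on the compact arc. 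A minor bookkeeping point is that $R$ is defined via the point $z-c_m$ whose polar angle enters through $\varphi(z-c_m)$ in \eqref{R:expan}; one should simply note $R(z-c_m,2k;\theta_1,\theta_N)$ is \eqref{R:expan} evaluated at the shifted argument, which is consistent with how \eqref{R:int:1} was invoked. With these remarks the proof is essentially the substitution-plus-\eqref{R:int:1} computation outlined above.
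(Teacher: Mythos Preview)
Your proposal is correct and follows exactly the paper's own argument: substitute the monostatic far-field asymptotics \eqref{MonoFarFieldAsymptoticFormula} into the inner product, pass from the Riemann sum to the arc integral for large $N$, and apply \eqref{R:int:1} with $z-c_m$ and $2k$; the constant prefactors cancel in the normalized ratio. Your additional care about uniformity in $z\in\Omega$ and non-degeneracy of the maximum goes beyond what the paper makes explicit, but is consistent with its brief proof.
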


\begin{proof}
	From the asymptotic formula of the far-field pattern \eqref{MonoFarFieldAsymptoticFormula}, we obtain for a sufficiently large $N$ that
	\begin{equation}\label{eqn:thm3.1}
		\begin{split}
			\bkc{u_{\infty}(\xh_{n};k),\, \rme^{-2\rmi k\xh_n\cdot z}}_{l^2(\mathbb{S}_*^1)}
			&=\sum_{m=1}^{M}\frac{1}{N}\sum_{n=1}^{N}\alp_{m}^{2}\left| D_{m}\right|\,\frac{\eps_{m}-\eps_{0}}{\sqrt{\eps_{0}\mu_{0}}}\,\rme^{2\rmi k\xh_{n}\cdot( z- c_{m})}+O(\alp^{3}),\\
			&\approx\sum_{m=1}^{M}\alp_{m}^{2}\left| D_{m}\right|\,\frac{\eps_{m}-\eps_{0}}{\sqrt{\eps_{0}\mu_{0}}}\,\frac{1}{\theta_{N}-\theta_{1}}\int_{\mathbb{S}^{1}_{*}}\rme^{2\rmi k\xh\cdot( z- c_{m})}\rmd S(\xh).
		\end{split}
	\end{equation}
	From \eqref{MSM2} and \eqref{R:int:1}, we complete the proof.	\end{proof}

\smallskip

\paragraph{Properties of the indicator function with single frequency measurement} From the asymptotic structure derived in Theorem \ref{Thm1}, we can observe the asymptotic properties of the indicator function of the MSM in limited-aperture configuration, assuming that the inhomogeneities are small and well separated and that the far-field data are obtained from the measurements, as follows:

\begin{enumerate}[(i)]
	%
	\item	The indicator function of the MSM has large values at the centers $z=c_m$ of $\tau_{m}$ since $\BesselJ_{0}(0)=1$ and $\BesselJ_{s}(0)=0$ for $s\geq1$. 
	Hence, $\Phi_{\textrm{MSM}}(z;k)$ is the concentrating part of the indicator function at the inhomogeneities $\tau_m$. On the other hand, $\Lambda_{\textrm{MSM}}(z;k)$ is the disturbing part. Note that the measurement angle $[\theta_{1},\theta_{N}]$ affects only the disturbing part and
	$$\Phi_{\textrm{MSM}}(z;k)=\mathfrak{F}_{\textrm{MSM}}(z;k),$$
	where $\mathfrak{F}_{\textrm{MSM}}(z;k)$ corresponds to the indicator function in full-aperture configuration given by \eqnref{MSM}. 
	%
	%
	
	\item It is well known that the Bessel function of the first kind of order zero satisfies
	\begin{equation}\label{Bessel:decay}
		\BesselJ_{0}(\xi)\, \approx\sqrt{\frac{2}{\pi \xi}}\,\cos\bke{ \xi-\frac{\pi}{4}}\quad\mbox{when}~\abs{\arg \xi}<\pi,~\abs{\xi}\to\infty.
	\end{equation}
	As the concentrating part $\Phi_{\textrm{MSM}}(z;k)$ in Theorem \ref{Thm1} is a linear combination of $J_0(2k|z-c_{m}|)$, it follows from \eqnref{Bessel:decay} that a high frequency ($k\to\infty$) leads to 
	the fast decay of $\Phi_{\textrm{MSM}}(z;k)$ as $z$ tends to be located away from the inhomogeneities $\tau_m$. In other words, a high frequency of the impinging wave improves the imaging accuracy of $\tau_m$ by $\calI_{\textrm{MSM}}( z;k)$. However, this is an ideal case and, in real applications, there is a limitation on the frequency.
	
	%
	%
	%
	%
	%
	\item
	Note that $\Lambda_{\textrm{MSM}}(z;k)$ is given in terms multiplied by 
	$$\cos\bke{\frac{s\left(\theta_{N}+\theta_{1}-2\varphi(z-c_m)\right)}{2}}\sinc\bke{\frac{s(\theta_{N}-\theta_{1})}{2}},$$
	which is zero for all $s$ and $m$ when $\theta_N -\theta_1 =2\pi$ (full-aperture case). 		This explains why the MSM in full-aperture configuration is more accurate than in the limited case. From the numerical study, the artifacts in the MSM are small for $\theta_N-\theta_1\geq \pi$.
	However, as $|\theta_N-\theta_1|$ decreases, the resulting $\sinc\bke{\frac{s(\theta_{N}-\theta_{1})}{2}}$ increases. 
	As a result, the imaging performance of the MSM becomes worse as the measurement interval becomes narrower. 

	\item Let us now consider more details for the case $\theta_N-\theta_1<\pi$.
	From \eqref{eqn:Jacobi_Anger} and \eqref{eqn:thm3.1}, we can express $\Lambda_{\textrm{MSM}}(z;k)$ as an integral, that is, 
	\begin{equation}\label{Lambda1:integral}
		\Lambda_{\textrm{MSM}}(z;k)=\sum_{m=1}^M w_m \sum_{s=1}^\infty 2\,\rmi^s \BesselJ_{s}(2k| z- c_{m}|)\,\frac{1}{\theta_N -\theta_1}\int_{\theta_1}^{\theta_N}\cos\big(s(\theta-\varphi(z-c_m))\big)\,d\theta.
	\end{equation}
	Hence, the disturbing term $\Lambda_{\textrm{MSM}}(z;k)$ is relatively small for $z$ such that the integral in \eqref{Lambda1:integral} is small. 
	We have
	\begin{align*}
		\int_{\theta_1}^{\theta_N}\cos\big(s(\theta-\varphi(z-c_m))\big)\,d\theta
		=\frac{1}{s} \int_{s(\theta_1-\varphi(z-c_m))}^{s(\theta_N -\varphi(z-c_m))} \cos\theta\,d\theta.
	\end{align*}
	This integral with $s=1$ vanishes if $z$ satisfies 
	\begin{equation}\label{cond:varphi}
		\varphi(z-c_m)\approx \frac{\theta_1+\theta_N}{2}\pm \frac{\pi}{2}.
	\end{equation}
	For the single inhomogeneity case, the artifacts in the indicator function $\calI_{\textrm{MSM}}( z;k)$ are small for $z$ satisfying \eqref{cond:varphi} as shown in Figure \ref{Ex1-MSMResult}\,(a).
	For the multiple inhomogeneities case, the artifacts are small for $z$ located near to one of the inhomogeneities and satisfying \eqnref{cond:varphi} as shown in Figure \ref{MSM2-1}.

	%
	%
	
\end{enumerate}

\smallskip

\subsection{Multiple Frequencies Measurement}
We now propose a multi-frequency MSM (MMSM).
Different from the single frequency measurement case, we assume a multi-frequency data set; that is, 
$
u_\infty(\hat{x}_n;k_p)$ for $n=1,\dots,N,\ p=1,\dots,P$,
where $\hat{x}_n$ are restricted in $\mathbb{S}^1_*\subset \mathbb{S}^1$ given by \eqref{MeasureRange}, and $k_p$ are ordered in increasing values. 
We define the MMSM indicator function as
\begin{equation}\label{MMSM}
	\calI_{\textrm{MMSM}}( z):=	
	\abs{\frac{1}{P}\,\sum_{p=1}^{P}\,\frac{\displaystyle\bkc{u_{\infty}(\xh,k_{p}),\rme^{-2\rmi k_{p}\xh\cdot z}}_{l^2(\mathbb{S}_*^1)}}
		{\disp\max_{z\in\Omg}\abs{\bkc{u_{\infty}(\xh,k_{p}),\rme^{-2\rmi k_{p}\xh\cdot z}}_{l^2(\mathbb{S}_*^1)}}}\,}
\end{equation}
for a point $z\in\RR^2$ in a test compact region $\Omega$.

\begin{theorem}\label{Thm2}
	Let $\left\{u_\infty(\hat{x}_n;k_{p})\,:\, n=1,\dots,N~\mbox{and}~p=1,\dots,P\right\}$ be given for $\hat{x}_n = [\cos(\theta_n),\sin(\theta_n)]^T\in\mathbb{S}^1_*$ for a given interval $I$. Set the weights $w_{m}=\alp_{m}^{2}(\eps_{m}-\eps_{0})|D_{m}|$. Then, for a sufficiently large $N$ and $P$ with $\theta_1,\theta_N,k_1,k_P$ fixed, we have
	\begin{equation}\label{MMSMresult}
		\calI_{\rm{MMSM}}(z)\approx 
		\left|\Phi_{\rm{MMSM}}(z) + \Lambda_{\rm{MMSM}}(z)\right|
	\end{equation}
	%
	with
	\begin{gather*}
		\Phi_{\rm{MMSM}}(z):=\sum_{m=1}^{M}w_{m}\, \widetilde{\BesselJ}_0(z-c_m;2k_1,2k_P),\\
		\Lambda_{\rm{MMSM}}(z): = \sum_{m=1}^{M}w_{m} \widetilde{R}(z-c_m;\,\theta_1,\theta_N,2k_1,2k_P).
	\end{gather*}
\end{theorem}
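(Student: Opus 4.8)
The plan is to reduce Theorem~\ref{Thm2} to the single-frequency estimate of Theorem~\ref{Thm1} used one wavenumber at a time, and then to pass from the discrete frequency average to an integral in $k$. First, for each fixed $p$ I would substitute the monostatic far-field asymptotics \eqref{MonoFarFieldAsymptoticFormula} at wavenumber $k_p$ into the numerator of \eqref{MMSM}. Repeating the computation \eqref{eqn:thm3.1} verbatim with $k$ replaced by $k_p$, for sufficiently large $N$ the equidistributed directions $\hat x_n\in\mathbb{S}^1_*$ turn the average over $n$ into an integral over $\mathbb{S}^1_*$, so that
\[
\bkc{u_\infty(\hat x;k_p),\,\rme^{-2\rmi k_p\hat x\cdot z}}_{l^2(\mathbb{S}^1_*)}
\approx\frac{1}{\sqrt{\eps_0\mu_0}}\sum_{m=1}^{M}w_m\,\frac{1}{\theta_N-\theta_1}\int_{\mathbb{S}^1_*}\rme^{2\rmi k_p\hat x\cdot(z-c_m)}\,\rmd S(\hat x)+O(\alp^3).
\]
Applying \eqref{R:int:1} with $k$ replaced by $2k_p$ and $z$ by $z-c_m$ then turns the leading term into $\tfrac{1}{\sqrt{\eps_0\mu_0}}\sum_{m=1}^{M}w_m\bigl(\BesselJ_0(2k_p|z-c_m|)+R(z-c_m,2k_p;\theta_1,\theta_N)\bigr)$, i.e.\ $\tfrac{1}{\sqrt{\eps_0\mu_0}}$ times the combination $\Phi_{\mathrm{MSM}}(z;k_p)+\Lambda_{\mathrm{MSM}}(z;k_p)$ of Theorem~\ref{Thm1}.

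Next I would handle the per-frequency normalizing factor $D_p:=\max_{z\in\Omega}\bigl|\bkc{u_\infty(\hat x,k_p),\rme^{-2\rmi k_p\hat x\cdot z}}_{l^2(\mathbb{S}^1_*)}\bigr|$. Using the small-volume and well-separatedness hypotheses together with $\BesselJ_0(0)=1$ and $\BesselJ_s(0)=0$ for $s\geq1$ (so that $R(0,2k_p;\theta_1,\theta_N)=0$), the maximum is realized, to leading order, near a center $c_{m_0}$ attaining $\max_m w_m$, with $D_p=\tfrac{1}{\sqrt{\eps_0\mu_0}}\max_m w_m$ up to lower-order cross terms between distinct inhomogeneities. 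The key point needed is that $D_p$ is, to leading order, \emph{independent of $p$}: the common factor $\tfrac{1}{\sqrt{\eps_0\mu_0}}$ then cancels against the numerator and $D_p$ may be pulled out of the average $\tfrac1P\sum_{p=1}^{P}$ as a single positive constant, which --- as for any sampling indicator, and exactly like the normalization in Theorem~\ref{Thm1} --- does not affect localization.

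Having removed that constant, what remains is $\tfrac1P\sum_{p=1}^{P}\sum_{m=1}^{M}w_m\bigl(\BesselJ_0(2k_p|z-c_m|)+R(z-c_m,2k_p;\theta_1,\theta_N)\bigr)$. Since $k_1<\dots<k_P$ are equidistant in $[k_1,k_P]$, for sufficiently large $P$ this is a Riemann sum for $\tfrac{1}{k_P-k_1}\int_{k_1}^{k_P}(\,\cdot\,)\,\rmd k$. Performing the rescaling $\kappa=2k$ and comparing with the definition \eqref{def:tilde:J} and the already-recorded identity $\widetilde R(w;\theta_1,\theta_N,K_1,K_P)=\tfrac{1}{K_P-K_1}\int_{K_1}^{K_P}R(w,k;\theta_1,\theta_N)\,\rmd k$, I obtain
\[
\frac{1}{k_P-k_1}\int_{k_1}^{k_P}\BesselJ_0(2k|z-c_m|)\,\rmd k=\widetilde{\BesselJ}_0(z-c_m;2k_1,2k_P),\qquad
\frac{1}{k_P-k_1}\int_{k_1}^{k_P}R(z-c_m,2k;\theta_1,\theta_N)\,\rmd k=\widetilde R(z-c_m;\theta_1,\theta_N,2k_1,2k_P);
\]
summing over $m$ gives $\Phi_{\mathrm{MMSM}}(z)+\Lambda_{\mathrm{MMSM}}(z)$, which is \eqref{MMSMresult}.

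I expect the only non-mechanical step to be the middle one: confirming that $D_p$ is asymptotically $p$-independent, so that the two limiting procedures --- large $N$ inside each inner product and large $P$ in the frequency average --- decouple and the frequency average genuinely converges to $\tfrac{1}{k_P-k_1}\int_{k_1}^{k_P}\cdots\,\rmd k$. Everything else is a direct reuse of the proof of Theorem~\ref{Thm1}, one Riemann-sum approximation in the frequency variable, and the elementary change of variables $\kappa=2k$ that matches $\widetilde{\BesselJ}_0$ and $\widetilde R$ to their definitions \eqref{def:tilde:J}--\eqref{tildeR:expan}.
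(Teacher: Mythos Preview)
Your proposal is correct and follows essentially the same route as the paper: apply Theorem~\ref{Thm1} at each wavenumber $k_p$, then pass from the discrete average $\tfrac{1}{P}\sum_p$ to the integral $\tfrac{1}{k_P-k_1}\int_{k_1}^{k_P}$ and identify the result with $\widetilde{\BesselJ}_0$ and $\widetilde{R}$. The only difference in emphasis is that you spell out why the per-frequency normalizer $D_p$ is asymptotically independent of $p$ (the paper hides this behind a ``$\propto$'' symbol), whereas the paper is in turn more explicit about justifying the interchange of the $p$-limit with the infinite Jacobi--Anger series, truncating to $s\le S$ via the decay estimate \eqref{Bessel:decay_s} before taking the Riemann limit term by term --- a point you pass over by invoking the integral identity for $\widetilde{R}$ directly.
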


\begin{proof}
	From Theorem \ref{Thm1}, we have
	\begin{equation*}
		\frac{\displaystyle\bkc{u_{\infty}(\xh_n;k_{p}),\,\rme^{-2\rmi k_{p}\xh_n\cdot z}}_{l^{2}(\mathbb{S}^{1}_{*})}}
		{\disp\max_{z\in\Omg}\abs{\bkc{u_{\infty}(\xh_n;k_{p}),\,\rme^{-2\rmi k_{p}\xh_n\cdot z}}_{l^{2}(\mathbb{S}^{1}_{*})}}}
		\propto\Phi_{\textrm{MSM}}(z;k_{p}) + \Lambda_{\textrm{MSM}}(z;k_{p})
	\end{equation*}
	and, hence,
	\begin{equation*}
		\frac{1}{P}\sum_{p=1}^{P}\frac{\displaystyle\bkc{u_{\infty}(\xh,k_{p}),\rme^{-2\rmi k_{p}\xh\cdot z}}_{l^{2}(\mathbb{S}^{1}_{*})}}
		{\disp\max_{z\in\Omg}\abs{\bkc{u_{\infty}(\xh,k_{p}),\rme^{-2\rmi k_{p}\xh\cdot z}}_{l^{2}(\mathbb{S}^{1}_{*})}}}
		\propto \frac{1}{P}\sum_{p=1}^{P}\Big(\Phi_{\textrm{MSM}}(z;k_{p}) + \Lambda_{\textrm{MSM}}(z;k_{p})\Big).
	\end{equation*}
	
	First, we obtain
	\begin{equation*}
		\begin{split}
			\frac{1}{P}\sum_{p=1}^{P}\Phi_{\textrm{MSM}}(z;k_{p}) 
			&=\frac{1}{P}\sum_{p=1}^{P}\sum_{m=1}^{M}w_{m}\BesselJ_{0}(2k_p| z- c_{m}|)\\
			&\approx \sum_{m=1}^{M}w_{m}\frac{1}{k_{P}-k_{1}}\int_{k_{1}}^{k_{P}}\BesselJ_{0}(2k| z- c_{m}|) \,\rmd k=\Phi_{\textrm{MMSM}}(z).
		\end{split}
	\end{equation*}
	
	Second, we estimate the disturbing part. The Bessel functions satisfy that (see, for example, \cite[Section 3.4]{Colton_Kress1})
	\begin{equation}\label{Bessel:decay_s}
		J_s(t)=\frac{t^n}{2^s\, s!}\left(1+O\left(\frac{1}{s}\right)\right)\quad \mbox{as }s\rightarrow\infty
	\end{equation}
	uniformly on compact subsets of $\RR$. 
	Hence, for a given $\epsilon>0$, there exists a sufficiently large number $S$ independent of $p,m,z$ such that 
	\begin{align*}
		\bigg|\Lambda_{\textrm{MSM}}(z;k_p)-\sum_{m=1}^{M}2w_{m}&\sum_{s=1}^S{\rmi^{s}}\BesselJ_{s}(2k_p| z- c_{m}|)\cos\bke{\frac{s(\theta_{N}+\theta_{1}-2\varphi_m)}{2}}\sinc\bke{\frac{s(\theta_{N}-\theta_{1})}{2}}\bigg|<\epsilon
	\end{align*}
	with $\varphi_m =\varphi(z-c_m)$. 
	Similarly, for a sufficiently large $P$ independent of $s,m,z$, we have
	$$\left|\frac{1}{P}\sum_{p=1}^{P}\BesselJ_{s}(2k_{p}| z- c_{m}|) -\frac{1}{k_{P}-k_{1}}\int_{k_1}^{k_P}\BesselJ_{s}(2k_p| z- c_{m}|)\rmd k\right|<\ep.$$
	It then follows that (see also \eqref{def:tilde:J})
	\begin{align*}
		\Bigg|&\frac{1}{P}\sum_{p=1}^{P}\Lambda_{\textrm{MSM}}(z;k_{p})\\ 
		&\qquad-  \sum_{m=1}^{M}2w_{m}\sum_{s=1}^{S} {\rmi^{s}}\cos\bke{\frac{s(\theta_{N}+\theta_{1}-2\varphi_m)}{2}}\sinc\bke{\frac{s(\theta_{N}-\theta_{1})}{2}}
		\widetilde{\BesselJ}_s(z-c_m;2k_1,2k_P)\Bigg|<C\epsilon
	\end{align*}
	for some constant $C$. By again applying \eqref{Bessel:decay_s}, we conclude that 
	$$\frac{1}{P}\sum_{p=1}^{P}\Lambda_{\textrm{MSM}}(z;k_{p})\approx \Lambda_{\textrm{MMSM}}(z).$$
	This proves \eqref{MMSMresult}.
\end{proof}

\begin{figure}[t!]
	\centering
	\subfigure[$\Phi_{\textrm{MSM}}(z;k)$\label{Bessel}]{\includegraphics[width=0.4\textwidth]{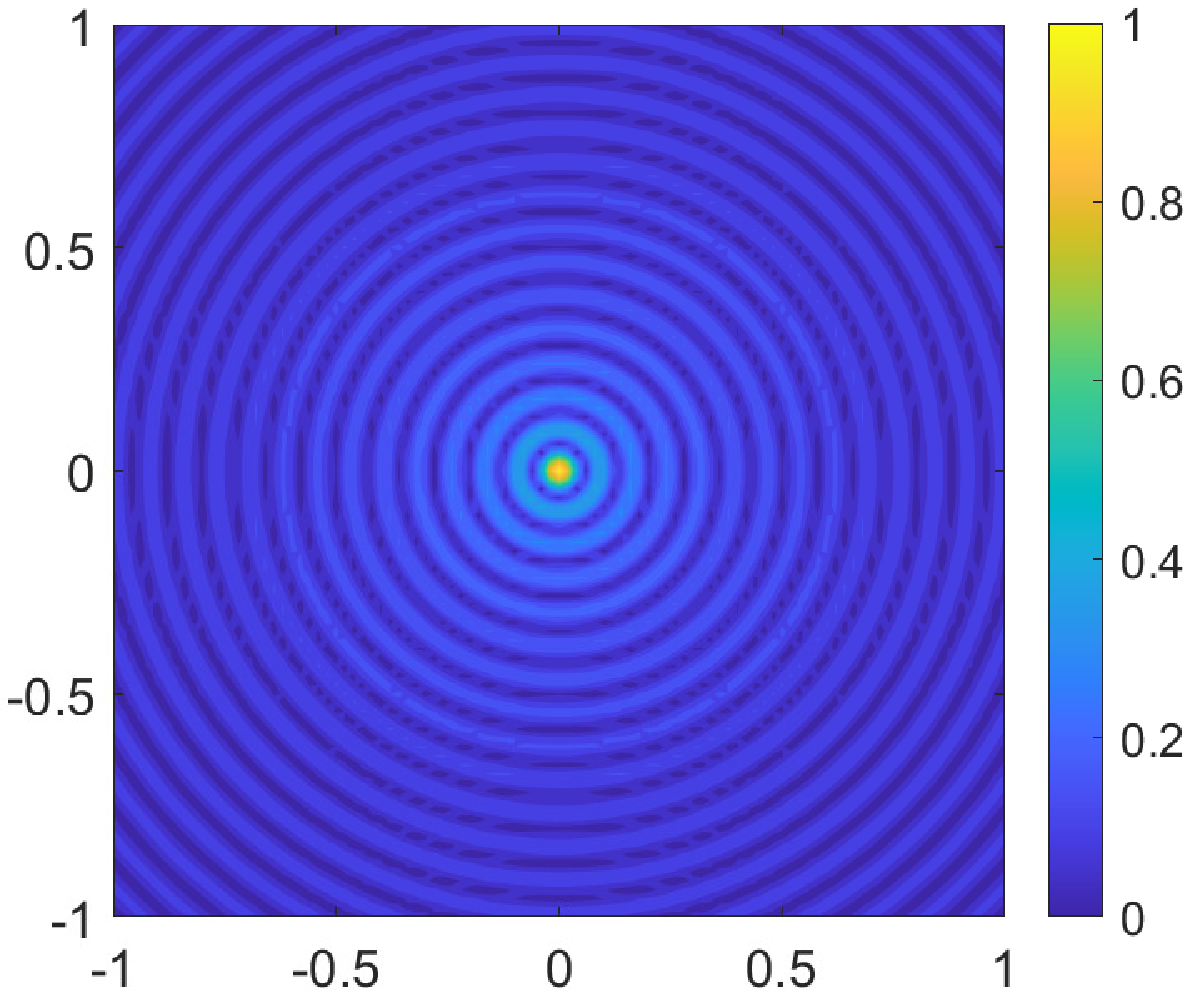}}
	\hskip .5cm
	\subfigure[$\Phi_{\textrm{MMSM}}(z)$\label{Struve1}]{\includegraphics[width=0.4\textwidth]{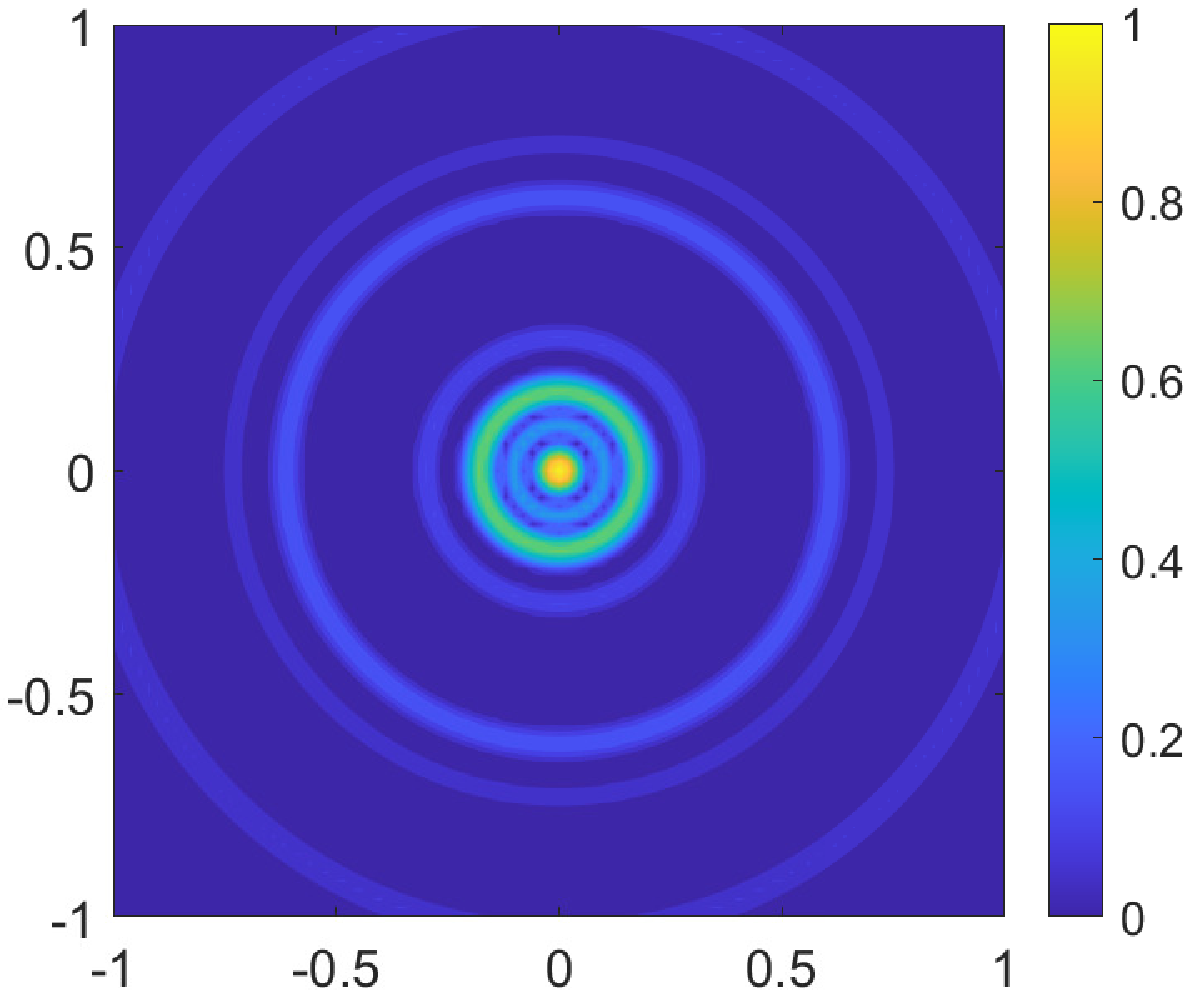}}
	\caption{Concentrating part of the indicator function in the presence of a small inhomogeneity centered at the origin for (a) MSM with the frequency \SI{1}{\GHz}, and (b) MMSM with $7$ frequencies from \SI{700}{\MHz} to \SI{1.3}{\GHz}. Each figure is normalized.}
	\label{Comparison_MSM_MMSM}
\end{figure}

\paragraph{Properties of the indicator function with multiple frequencies measurement} From the asymptotic structure derived in Theorem \ref{Thm2}, we can observe the properties of the MMSM in limited-aperture configuration, assuming that the inhomogeneities are small and well separated and that the far-field data are obtained from the measurements, as follows:
\begin{enumerate}[(i)]
	\item The indicator function of the MMSM can be decomposed into the concentrating term $\Phi_{\textrm{MMSM}}(z)$ and the disturbing term $\Lambda_{\textrm{MMSM}}(z)$ that have exactly the same form as those of the MSM, except that the Bessel function $J_s(2k|z-c_m|)$ is now replaced with $\widetilde{J}_s(z;k_1,k_P)$ (the mean value of the Bessel function with respect to the frequency). 
	
	\item As shown in Figure \ref{Comparison_MSM_MMSM}, the concentrating term of the multi-frequency measurement case is better concentrated than that of the single frequency near the location of the inhomogeneity. Hence, we conclude that the MMSM is an improved version of the MSM for imaging small inhomogeneities.

	\item Since $\Lambda_{\textrm{MMSM}}(z)$ is given in terms multiplied by the same cosine function and sinc function terms as $\Lambda_{\textrm{MSM}}(z;k)$, it has a similar dependence on the measurement interval $[\theta_1,\theta_N]$. In particular, the MMSM with $\theta_N-\theta_1=2\pi$ (full-aperture case) is more accurate than the limited case, and it shows worse results with smaller $\theta_N-\theta_1$. Similar to the single frequency measurement, the artifacts in the MMSM are significantly reduced for $\theta_N-\theta_1\geq \pi$ (see Figure \ref{Ex2-MSMResult} in Section \ref{sec:5}). For the case $\theta_N-\theta_1<\pi$, the artifact in $\calI_{\textrm{MMSM}}(z)$ is small for $z$ satisfying \eqref{cond:varphi}. 
\end{enumerate}

\begin{remark}
	Thanks to the hypothesis of sufficiently large $P$ and the indefinite integral formula of the Bessel function \cite[p.7]{integral_bessel}:
	\begin{equation*}
		\int \BesselJ_{0}(t)\rmd t = t\BesselJ_{0}(t) + \frac{\pi t}{2}\Big(\BesselJ_{1}(t)\Struve_{0}(t) - \BesselJ_{0}(t)\Struve_{1}(t)\Big),
	\end{equation*}
	one can easily find that
	$$\widetilde{J}_0(z;k_1,k_p)=\frac{1}{k_P-k_1} \left(\mathcal{C}(z;k_P)-\mathcal{C}(z;k_1)\right)$$
	with
	\begin{multline}\label{MMSM_Contributing1}
		\mathcal{C}(z;k):= k\BesselJ_{0}(2k| z- c_{m}|)
		+ \frac{\pi k}{2}\Big(\BesselJ_{1}\big(2k| z- c_{m}|\big)\Struve_{0}\big(2k| z- c_{m}|\big)
		- \BesselJ_{0}\big(2k| z- c_{m}|\big)\Struve_{1}\big(2k| z- c_{m}|\big)\Big),
	\end{multline}
	where $\Struve_{s}$ indicates the Struve function of integer order $s$ (see \cite[Chapter 12]{bessel_book} for the definition and properties of the Struve function).
\end{remark}

\section{Comparison of MSM and Classical DSM in Limited-Aperture Configuration}\label{sec:4}
Recall that we define the classical DSM, \eqref{DSM}, by using a fixed incoming wave direction $\vartheta$ and frequency $k$ in full-aperture configuration. The same definition (and its extension to the multiple frequencies measurement case) can be adopted in limited-aperture configuration, and the only difference is that $\hat{x}_n$ are now restricted in a subset $\mathbb{S}^1_*$ of $\mathbb{S}^1$.
In \cite{Kang_mfDSM_limited}, the asymptotic features of the classical DSM with single- and multi--frequency measurement in limited-aperture configuration was investigated.  In Subsection \ref{subsec:review:Park}, we review the results obtained in \cite{Kang_mfDSM_limited}, and we then compare our results on the MSM (that is, Theorems \ref{Thm1} and \ref{Thm2}) and the classical DSM in Subsection \ref{subsec:comparison}, given the same amount of measurement data.

\subsection{Classical DSM in Limited-Aperture Configuration}\label{subsec:review:Park}
As in the previous section, we set $\hat{x}_n=[\cos(\theta_n),\sin(\theta_n)]^T$ for $n=1,\dots,N$, where $I=[\theta_1,\theta_N]$ is a given fixed subinterval of $[0,2\pi]$. We accordingly define $\mathbb{S}^1_*$. We also denote by $\Omg$ a test region. 
\begin{theorem}[\cite{Kang_mfDSM_limited}, single frequency measurement]\label{thm:review:single}
	Let the measurement data $\{u_\infty(\hat{x}_n,\vartheta;k): n=1,\dots,N\}$ be given, where $k$ and $\vartheta$ are fixed. For a sufficiently large $N$, the indicator function of the classical DSM, defined by \eqref{DSM}, has the following asymptotic formula: for $z\in\Omg$,
	\begin{equation}\label{DSMresult}
		\mathcal{I}_{\mathrm{DSM}}(z,\vartheta;k)\approx\frac{\left|\Phi_{\mathrm{DSM}}(z,\vartheta;k) + {\Lambda}_{\mathrm{DSM}}(z,\vartheta;k)\right|}{\disp\max_{z\in\Omega}\left|\Phi_{\mathrm{DSM}}(z,\vartheta;k) + {\Lambda}_{\mathrm{DSM}}(z,\vartheta;k)\right|}
	\end{equation}
	with
	\begin{gather}\label{DSMcontri}
		\Phi_{\mathrm{DSM}}(z,\vartheta;k) = \sum_{m=1}^{M}w_{m}\,\rme^{\rmi k\vartheta\cdot c_{m}}\BesselJ_{0}(k\abs{z-c_{m}}),\\
		\label{DSMdistrub}
		{\Lambda}_{\mathrm{DSM}}(z,\vartheta,k)=\sum_{m=1}^{M}w_{m}\,\rme^{\rmi k\vartheta\cdot c_{m}} R(z-c_m,k;\theta_1,\theta_N).\end{gather}
\end{theorem}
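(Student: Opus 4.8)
The plan is to mimic exactly the proof of Theorem~\ref{Thm1}, now keeping the incident direction $\vartheta$ fixed rather than tying it to the measurement direction. First I would start from the small-volume asymptotic formula for the far-field pattern in its full two-variable form \eqref{FarFieldAsymptoticFormula}, specialize the second argument to the fixed $\vartheta$, and substitute into the numerator of the classical DSM indicator \eqref{DSM} with the inner product now taken over the restricted set $\mathbb{S}^1_*$. The factor $\rme^{\rmi k\vartheta\cdot c_m}$ does not depend on the summation index $n$, so it pulls out of the discrete sum, leaving $\langle \rme^{-\rmi k\xh_n\cdot c_m},\rme^{-\rmi k\xh_n\cdot z}\rangle_{l^2(\mathbb{S}^1_*)}$, i.e. essentially $\frac1N\sum_n \rme^{\rmi k\xh_n\cdot(z-c_m)}$.

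Next, for $N$ large this Riemann-type sum is approximated by the integral $\frac{1}{\theta_N-\theta_1}\int_{\mathbb{S}^1_*}\rme^{\rmi k\xh\cdot(z-c_m)}\,\rmd S(\xh)$, exactly as in \eqref{eqn:thm3.1}. Now I would invoke \eqref{R:int:1} with the shift $z\mapsto z-c_m$, which gives $\BesselJ_0(k|z-c_m|)+R(z-c_m,k;\theta_1,\theta_N)$. Collecting the $m$-sum and the weights $w_m=\alpha_m^2(\eps_m-\eps_0)|D_m|$ (absorbing the harmless constant $1/\sqrt{\eps_0\mu_0}$ into the overall normalization, since the indicator is a ratio) yields $\Phi_{\mathrm{DSM}}(z,\vartheta;k)+\Lambda_{\mathrm{DSM}}(z,\vartheta;k)$ as in \eqref{DSMcontri}--\eqref{DSMdistrub}. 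Dividing by the maximum over $\Omega$ gives \eqref{DSMresult}, and the $O(\alpha^3)$ remainder is negligible under the small-volume hypothesis.

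Since every ingredient — the far-field asymptotics, the sum-to-integral approximation for large $N$, and the identity \eqref{R:int:1} — is already available in the excerpt, there is essentially no obstacle; the only point requiring a little care is bookkeeping the phase factor $\rme^{\rmi k\vartheta\cdot c_m}$ through the linearity of the inner product and confirming it survives into both the concentrating and disturbing parts (unlike the monostatic case, where the analogous phase cancels because $\vartheta=-\xh_n$ makes $\rme^{\rmi k\vartheta\cdot c_m}\rme^{-\rmi k\xh_n\cdot c_m}$ collapse differently). I would also remark, as the paper does for Theorem~\ref{Thm1}, that the basis functions here are $\BesselJ_0(k|z-c_m|)$ rather than $\BesselJ_0(2k|z-c_m|)$, which is precisely the origin of the resolution difference between the DSM and the MSM that Section~\ref{sec:4} goes on to exploit.
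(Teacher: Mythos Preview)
Your proposal is correct and follows exactly the approach the paper indicates: the paper's own proof is a single sentence pointing to the far-field asymptotics \eqref{FarFieldAsymptoticFormula} and the integral identity \eqref{R:int:1}, and you have carried out precisely that computation, including the correct handling of the $\vartheta$-dependent phase $\rme^{\rmi k\vartheta\cdot c_m}$ and the normalization that cancels $1/\sqrt{\eps_0\mu_0}$.
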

\begin{proof}
	One can prove the theorem from the asymptotic expression of the far-field pattern in \eqref{FarFieldAsymptoticFormula} and \eqref{R:int:1}.
\end{proof}


With the data corresponding to multiple frequencies $k_1$,\dots,$k_P$, we now define the indicator function 
\begin{equation}\label{MDSM}
	\mathcal{I}_{\mathrm{MDSM}}(z,\vartheta)
	=\abs{\frac{1}{P}\displaystyle\sum_{p=1}^{P}\frac{\langle u_{\infty}(\hat{x}_{n},\vartheta;k_{p}),\rme^{-\rmi k_{p}\hat{x}_{n}\cdot z}\rangle}
		{\disp\max_{z\in\Omg}\abs{\langle u_{\infty}(\hat{x}_{n},\vartheta;k_{p}),\rme^{-\rmi k_{p}\hat{x}_{n}\cdot z}\rangle}}\,},
\end{equation}
which satisfies from Theorem \ref{thm:review:single} that
\begin{equation}\label{lemma:MDSM}
	\mathcal{I}_{\mathrm{MDSM}}(z,\vartheta)\approx {\left|{\widetilde{\Phi}}_{\mathrm{MDSM}}(z,\vartheta) + {\widetilde{\Lambda}}_{\mathrm{MDSM}}(z,\vartheta)\right|}
\end{equation}
with
\begin{align*}
	{\widetilde{\Phi}}_{\mathrm{MDSM}}(z,\vartheta)&=\frac{1}{P}\sum_{p=1}^P \Phi_{\mathrm{DSM}}(z,\vartheta;k_p)
	= \sum_{m=1}^{M}w_{m}\frac{1}{P}\sum_{p=1}^P\rme^{\rmi k_p\vartheta\cdot c_{m}}\BesselJ_{0}(k_p\abs{z-c_{m}}),\\
	{\widetilde{\Lambda}}_{\mathrm{MDSM}}(z,\vartheta)&=\frac{1}{P}\sum_{p=1}^P \Lambda_{\mathrm{DSM}}(z,\vartheta;k_p)
	=\sum_{m=1}^{M}w_{m}\frac{1}{P}\sum_{p=1}^P\rme^{\rmi k_p\vartheta\cdot c_m} R(z-c_m,k_p;\theta_1,\theta_N).
\end{align*}

For a sufficiently large $P$, we can approximate these summations in $p$ by using the notations
\begin{equation}\label{def:doubletildeJ}
	\doublewidetilde{\BesselJ}_{s}(z;\vartheta,k_{1},k_{P}):=\frac{1}{k_{P}-k_{1}}\int_{k_{1}}^{k_{P}} \rme^{\rmi k \vartheta\cdot c_{m}}\BesselJ_{s}(k|z|)\rmd k,\quad s=1,2\dots,
\end{equation}
\begin{equation}
	\doublewidetilde{R}(z;\theta_1,\theta_N,k_1,k_P): = 2\sum_{s=1}^{\infty}{\rmi^{s}}\,\doublewidetilde{\BesselJ}_{s}(z;\vartheta,k_1,k_P)\cos\bke{\frac{s(\theta_{N}+\theta_{1}-2\varphi(z))}{2}}\sinc\bke{\frac{s\left(\theta_{N}-\theta_{1}\right)}{2}}.
\end{equation}
Then, \eqref{lemma:MDSM} leads us to the following theorem.
\begin{theorem}[\cite{Kang_mfDSM_limited}, multiple frequencies measurement]\label{thm:review:multiple}
	Let the measurement data $\{u_\infty(\hat{x}_n,\vartheta;k_p): n=1,\dots,N~\mbox{and}~p=1,\dots,P\}$ be given, where $\vartheta$ is fixed and $k_p$, $p=1,\dots,P$, are sample points of a fixed interval $[k_1,k_P]$. For a sufficiently large $N$ and $P$, the indicator function of the multi-frequency DSM, defined by \eqref{MDSM}, satisfies the asymptotic formula: for $z\in\Omg$,
	\begin{equation}\notag
		\mathcal{I}_{\mathrm{MDSM}}(z,\vartheta)\approx{\left|\Phi_{\mathrm{MDSM}}(z,\vartheta) + \Lambda_{\mathrm{MDSM}}(z,\vartheta)\right|}
	\end{equation}
	with	
	\begin{equation*}
		\begin{split}
			\Phi_{\mathrm{MDSM}}(z,\vartheta)&=\sum_{m=1}^{M}w_{m}\doublewidetilde{\BesselJ}_0(z-c_{m};k_1,k_P),\\
			\Lambda_{\mathrm{MDSM}}(z,\vartheta)& = \sum_{m=1}^{M}w_{m}\doublewidetilde{R}(z-c_{m};\,\theta_1,\theta_N,k_1,k_P).
		\end{split}
	\end{equation*}
	
\end{theorem}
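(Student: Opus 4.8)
The plan is to derive Theorem~\ref{thm:review:multiple} as a direct corollary of Theorem~\ref{thm:review:single} (equivalently, of \eqref{lemma:MDSM}), following the same path used in the proof of Theorem~\ref{Thm2}. The starting point is the already-established approximation \eqref{lemma:MDSM}, so the only work is to show that the frequency-averaged quantities ${\widetilde{\Phi}}_{\mathrm{MDSM}}$ and ${\widetilde{\Lambda}}_{\mathrm{MDSM}}$ converge, as $P\to\infty$ with $[k_1,k_P]$ fixed and the $k_p$ equidistant, to the integral expressions $\Phi_{\mathrm{MDSM}}$ and $\Lambda_{\mathrm{MDSM}}$ built from $\doublewidetilde{\BesselJ}_0$ and $\doublewidetilde{R}$ defined in \eqref{def:doubletildeJ}.

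First I would handle the concentrating part: since ${\widetilde{\Phi}}_{\mathrm{MDSM}}(z,\vartheta)=\sum_m w_m\frac{1}{P}\sum_{p=1}^P \rme^{\rmi k_p\vartheta\cdot c_m}\BesselJ_0(k_p|z-c_m|)$ is, for each $m$, a Riemann sum of the continuous function $k\mapsto \rme^{\rmi k\vartheta\cdot c_m}\BesselJ_0(k|z-c_m|)$ on the compact interval $[k_1,k_P]$, it converges to $\frac{1}{k_P-k_1}\int_{k_1}^{k_P}\rme^{\rmi k\vartheta\cdot c_m}\BesselJ_0(k|z-c_m|)\,\rmd k=\doublewidetilde{\BesselJ}_0(z-c_m;\vartheta,k_1,k_P)$ as $P\to\infty$, uniformly in $z$ on the compact region $\Omega$ (the integrand and its $k$-derivative are bounded uniformly in $z\in\Omega$). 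Summing the finitely many terms gives ${\widetilde{\Phi}}_{\mathrm{MDSM}}\to\Phi_{\mathrm{MDSM}}$.

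Next I would treat the disturbing part ${\widetilde{\Lambda}}_{\mathrm{MDSM}}(z,\vartheta)=\sum_m w_m\frac{1}{P}\sum_{p=1}^P \rme^{\rmi k_p\vartheta\cdot c_m}R(z-c_m,k_p;\theta_1,\theta_N)$. Here $R$ itself is an infinite series in $s$, so I would mirror the two-stage argument in the proof of Theorem~\ref{Thm2}: using the uniform decay estimate \eqref{Bessel:decay_s} for $\BesselJ_s$ on compact sets, truncate the $s$-series at a finite $S=S(\epsilon)$ independent of $p$, $m$, $z$, with error below $\epsilon$; then for each fixed $s\le S$ apply the Riemann-sum convergence $\frac{1}{P}\sum_{p=1}^P \rme^{\rmi k_p\vartheta\cdot c_m}\BesselJ_s(k_p|z-c_m|)\to\doublewidetilde{\BesselJ}_s(z-c_m;\vartheta,k_1,k_P)$; finally re-extend the sum over all $s$ using \eqref{Bessel:decay_s} again to recognize $\doublewidetilde{R}$. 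Combining the cosine/sinc factors (which do not depend on $k$) through these limits yields ${\widetilde{\Lambda}}_{\mathrm{MDSM}}\to\Lambda_{\mathrm{MDSM}}$, and substituting both limits into \eqref{lemma:MDSM} completes the proof.

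The main obstacle, as in Theorem~\ref{Thm2}, is making the interchange of the limit $P\to\infty$ with the infinite $s$-summation rigorous and uniform in $z\in\Omega$: one must verify that the truncation level $S$ and the Riemann-sum rate can be chosen independently of $m$ and of $z$ ranging over the compact test region, which is exactly what \eqref{Bessel:decay_s} and the boundedness of the cosine/sinc weights provide. Everything else is routine; in fact, since the proof is essentially identical to that of Theorem~\ref{Thm2} with $\BesselJ_s(k|z-c_m|)$ replaced by $\rme^{\rmi k\vartheta\cdot c_m}\BesselJ_s(k|z-c_m|)$ (the extra unimodular factor does not affect any of the estimates), I expect the write-up to be short and to simply point to the argument already given for Theorem~\ref{Thm2}.
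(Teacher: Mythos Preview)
Your proposal is correct and follows exactly the approach the paper takes: the paper derives \eqref{lemma:MDSM} from Theorem~\ref{thm:review:single} and then simply states that, for large $P$, the sums in $p$ are approximated by the integrals defining $\doublewidetilde{\BesselJ}_s$ and $\doublewidetilde{R}$, whence the theorem follows. In fact the paper does not even write out the Riemann-sum/truncation details you supply; it just says ``\eqref{lemma:MDSM} leads us to the following theorem,'' so your plan to mirror the argument of Theorem~\ref{Thm2} with the extra harmless factor $\rme^{\rmi k\vartheta\cdot c_m}$ is precisely what is intended.
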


Recall that $\vartheta$ is fixed. We set $\vartheta\cdot c_m = |c_m|\cos(\psi_m)$ for each location of inhomogeneities. In \cite{Kang_mfDSM_limited}, it was shown that at $z=c_m$, 
the concentrating term $\Phi_{\mathrm{MDSM}}(z;\vartheta)$ satisfies
\begin{equation}\notag
	\Phi_\textrm{MDSM}(c_{m};\vartheta) = \frac{1}{k_P-k_1}\left(\Phi^{(1)}+ \Phi^{(2)}\right)
\end{equation}
with 
\begin{align}
	\begin{aligned}\label{LimitationMDSM_temp1}
		\Phi^{(1)}=
		&{_1 \mathrm{F}_2}\bke{\frac{1}{2};1,\frac{3}{2};\frac{|c_m|^{2}k_{P}^{2}}{4}}k_{P}-{_1 \mathrm{F}_2}\bke{\frac{1}{2};1,\frac{3}{2};\frac{|c_m|^{2}k_{1}^{2}}{4}}k_{1},\\
		\Phi^{(2)}=&\sum_{t=1}^{\infty}\frac{\cos(t\psi_{m})}{2^{t-1}\Gamma(t+2)}\Bigg\{{_1\mathrm{F}_{2}}\bke{\frac{t+1}{2};t+1,\frac{t+3}{2};-\frac{1}{4}k_{P}^{2}|c_{m}|^{2}}k_{P}\\
		&\qquad\qquad\qquad\qquad- {_1 \mathrm{F}_{2}}\bke{\frac{t+1}{2};t+1,\frac{t+3}{2};-\frac{1}{4}k_{1}^{2}|c_{m}|^{2}}k_{1}\Bigg\},
	\end{aligned}
\end{align}
where ${_a \mathrm{F}_{b}}$ denotes the generalized hypergeometric function of orders $a$ and $b$.

Due to properties of the hypergeometric functions, the indicator function $\mathcal{I}_{\textrm{MDSM}}(z;\vartheta)$ at $c_m$, $m=1,\dots,M$, may have significant differences in magnitude depending on the location of $c_m$ even though other characteristics (e.g., permittivity, size, shape) are the same. Hence,  it is in general impossible to detect all multiple inhomogeneities via MDSM.

\subsection{Comparison of Indicator Functions of MSM and Classical DSM}\label{subsec:comparison}

We compare the asymptotic features of the MSM and classical DSM with single- and multi--frequency measurement in limited measurement environments.

\paragraph{Single frequency measurement}
From Theorem \ref{Thm1} and Theorem \ref{thm:review:single}, the indicator functions of the MSM and DSM asymptotically have the concentrating terms 
\begin{gather*}
	\Phi_{\textrm{MSM}}(z;k)= \sum_{m=1}^{M}w_{m}\BesselJ_{0}(2k| z- c_{m}|),\\
	\Phi_{\textrm{DSM}}(z,\vartheta;k)= \sum_{m=1}^{M}w_{m}\,\rme^{\rmi k\vartheta\cdot c_{m}}\BesselJ_{0}(k\abs{z-c_{m}}),
\end{gather*}
where the frequency $k$ and the direction $\vartheta$ are fixed. 
The asymptotic features of the MSM and classical DSM with single frequency have similarities and differences, which are as follows:
\begin{enumerate}[(i)]
	\item The functions $\BesselJ_{0}(2k|z-c_{m}|)$ (in MSM) and $\BesselJ_{0}(k|z-c_{m}|)$ (in DSM), that are defined using the Bessel functions of the first kind, are essential to image the inhomogeneities. The indicator function of the MSM has more oscillations than that of the DSM.
	
	\item The exponential term $\rme^{\rmi k\vartheta\cdot c_{m}}$ is multiplied only for the DSM. For the single frequency case, $k$ is fixed and, thus, $\rme^{\rmi k\vartheta\cdot c_{m}}$ is a constant for each inhomogeneity $\tau_m$. The MSM and DSM perform similarly for both single and multiple inhomogeneities imaging.

	\item  Similar to the discussion in Section \ref{subsec:single}, for both the MSM and DSM, the effect caused by the disturbing term is significantly decreased when $\theta_{n}-\theta_{1}\geq\pi$.

\end{enumerate}

\paragraph{Multiple frequencies measurement} For the multiple frequencies case, we compare $\Phi_{\textrm{MMSM}}(z)$ 
and $\Phi_{\textrm{MDSM}}(z,\vartheta)$, where the wavenumbers $k=k_1,\dots,k_p$ are used and $\vartheta$ is fixed. 
The asymptotic features of the MMSM and MDSM are determined by
$$	\frac{1}{P}\sum_{p=1}^{P}\Phi_{\textrm{MSM}}(z;k_{p}) \quad\mbox{and}\quad
\frac{1}{P}\sum_{p=1}^P \Phi_{\mathrm{DSM}}(z,\vartheta;k_p),$$ 
respectively. 
They have different features as follows.
\begin{enumerate}[(i)]
	\item 
	The MMSM shows an improved imaging performance for both single and multiple inhomogeneities. However, the MDSM may not detect some of the inhomogeneities for the multiple inhomogeneities case. The unexpected phenomenon of the MDSM comes from the effect of source term $\rme^{\rmi k \vartheta\cdot c_{m}}$ in $\Phi_{\textrm{DSM}}$. However, the MSM simultaneously tests the effects of the source and receiver directions. This leads that the concentrating term of the MMSM does not include $\rme^{\rmi k \vartheta\cdot c_{m}}$ or similar terms.

	\item For both the MMSM and MDSM, the effect caused by this disturbing term is significantly decreased when $\theta_{n}-\theta_{1}\geq\pi$.
\end{enumerate}

\section{Numerical Simulation}\label{sec:5}
In this section, we perform numerical simulations to validate our theoretical results.  
In Subsection \ref{sec:numerical:MSM}, we illustrate the indicator functions in monostatic configuration (i.e., the MSM and MMSM); see \eqref{MSM2} and \eqref{MMSM}. In Subsection \ref{sec:numerical:DSM}, we compare the results with the classical DSM. 

For the MSM, we use the the measurement data $\left\{u_\infty(\hat{x}_n;k)\,:\, n=1,\dots,N\right\}$ with $k$ fixed, where $\hat{x}_n=[\cos(\theta_n),\sin(\theta_n)]^T$ and the angle $\theta_n$ is contained in an interval. For the MMSM, we use the data with multiple frequencies $\left\{u_\infty(\hat{x}_n;k_{p})\,:\, n=1,\dots,N,\ p=1,\dots,P\right\}$.

To compare the methods with the classical DSM, for the same inhomogeneities as in Subsection \ref{sec:numerical:MSM}, we show the imaging results obtained with the indicator functions of the classical DSM given by \eqref{DSM} and \eqref{MDSM}.
For the DSM, we use $\{u_\infty(\hat{x}_n,\vartheta;k): n=1,\dots,N\}$ with fixed $k$, $\vartheta$.
For the MDSM, we use $\{u_\infty(\hat{x}_n,\vartheta;k_p): n=1,\dots,N,\ p=1,\dots,P\}$ with fixed $\vartheta$.

The following three cases are considered: single small inhomogeneity, multiple small inhomogeneities, and extended target.
For all examples, the far-field pattern is obtained by using FEKO, a commercial EM simulation software. For the single frequency case, we set $f=\SI{1}{\GHz}=c_{0}/\lambda_{0}$, where $c_{0}=1/\sqrt{\eps_{0}\mu_{0}}\approx\SI{299792458}{\meter/\second}$ is the speed of light and wavelength $\lam_{0}={2\pi}/{k}=\SI{0.2997925}{\meter}\approx\SI{0.3}{\meter}$. For the multi-frequency case, we use the measurements with $7$ frequencies from \SI{700}{\MHz} to \SI{1.3}{\GHz} with $\SI{100}{\MHz}$ step size. The $k_{p}$ is the corresponding wavenumber for each frequency. As the region of interest $\Omg$, a square domain with sides of length $\SI{2}{\meter}(\approx\frac{20}{3}\lam_{0})$ is used with $101\times101$ discretization.
To show the robustness of our numerical schemes, we add $\SI{20}{\dB}$ white Gaussian random noise on the far-field pattern in all examples except Figure \ref{Ex2-1-MSMResult} using the \textit{awgn} command in \textit{Matlab}. In Figure \ref{Ex2-1-MSMResult}, we add $\SI{10}{\dB}$ noise to the unperturbed data.

\subsection{Results of the MSM and MMSM}\label{sec:numerical:MSM}
\subsubsection{Single Small dielectric Disk}\label{sec:Ex1}

We consider a single small dielectric disk with radius $\alp=0.1\lambda_{0}$ and permittivity $\eps=3\eps_{0}$ ($\mu=\mu_0$). Figure \ref{Ex1-MSMResult} shows that the inhomogeneity can be identified via the MSM in limited-aperture configurations even with a narrow range of measurement. As expected, the MMSM has better results than the MSM. 

As one of the main features of the MSM, the narrow measurement angle increases the effect of the disturbing term (that is, $\Lambda_{\textrm{MSM}}(z;k)$), as shown in Figure \ref{MSM1-1}. For the wide range of observation directions with $\theta_{N}-\theta_{1}\geq\pi$, the effect due to the disturbing term is decreased. However, large oscillations are still observed in Figures \ref{MSM1-2} and \ref{MSM1-3} due to the oscillation properties of the Bessel function, which is the concentrating term in the indicator function. As discussed in Subsection \ref{sec:4}, the concentrating term of the MMSM is better localized near the inhomogeneities and, hence, shows significantly reduced oscillations compared to the MSM.


\subsubsection{Multiple Small Dielectric Disks}\label{sec:Ex2}
We provide the imaging results when three small dielectric circular inhomogeneities are embedded in the background medium. We assume that the sizes ($\alp_{m}=0.1\lambda_{0}$) and permittivities ($\eps_{m}=3\eps_{0}$) are identical for all $m=1,2,3$. Again, we assume that $\mu_m=\mu_0$. Figure \ref{Ex2-MSMResult} shows the indicator functions of the MSM and MMSM with various ranges of angles in monostatic configurations. The MSM can localize all three inhomogeneities by using the data with $\theta_{N}-\theta_{1}\geq\pi$. The MMSM shows significantly improved results, and it identifies all three inhomogeneities even when $\theta_{N}-\theta_{1}=\frac{\pi}{2}$.

\subsubsection{Noise Robustness}\label{sec:Ex2-1}
	From the previous studies including the results in \cite{dsm2d_farfield,Kang_3DDSM,PARK2018648}, it is well known that the DSM is a noise robust technique for locating inhomogeneities in various inverse scattering problems. 
	Figure \ref{Ex2-1-MSMResult} presents the maps of the MSM and MMSM for the multiple small dielectric disks (the same example as in Figure \ref{Ex2-MSMResult}) with \SI{10}{\dB} white Gaussian random noise added to the unperturbed data. The results in Figure \ref{Ex2-MSMResult} (with \SI{20}{\dB} noise) and Figure \ref{Ex2-1-MSMResult} (with \SI{10}{\dB} noise) show very similar results, i.e., the noise level of data has little effect on the imaging performance of the method. The MSM and MMSM are strong to the noise of input data.

\subsubsection{Extended Target}
Our theoretical results are based on the small volume hypothesis of inhomogeneities.
In this example, we test our proposed methods in imaging an extended target, which has a size bigger than $\frac{\pi}{k_{p}}=\frac{\lambda_{p}}{2}$ for all $p=1,2,\dots,7$. 
Figure \ref{Ex3-MSMResult} visualizes the indicator functions of the MSM and MMSM with an extended target, which is a disk and is indicated in white. Using the MSM (that is, with single frequency data), one can identify the center of the inhomogeneity. However, non-center points cannot be detected even with a wide range of monostatic measurement systems. Interestingly, the indicator function of the MMSM attains high magnitude at the points near the boundary points of the extended target located in the direction of the measurements.

\subsection{Comparison with the Results of the Classical DSM}\label{sec:numerical:DSM}

\smallskip 

Figure \ref{Ex1-DSMResult} shows the indicator functions of the DSM and MDSM for the same inhomogeneity as in Figure \ref{Ex1-MSMResult}. 
As discussed in Section \ref{subsec:comparison}, for the single inhomogeneity imaging, the DSM and MDSM show the imaging performances similar to those of the MSM and MMSM.  Also, the MDSM (using the data with multi-frequency measurements) shows better results than the DSM (with single-frequency measurements). Oscillations appear with higher spatial frequency in Figure \ref{Ex1-MSMResult} than in Figure \ref{Ex1-DSMResult}, which can be expected from Figure \ref{Comparison_MSM_MMSM}. 

Figure \ref{Ex2-DSMResult} shows that the DSM cannot image multiple inhomogeneities. The MDSM detects some, but not all, inhomogeneities even with the wide range of observations. 
However, all three inhomogeneities are identified by the MMSM (see Figure \ref{Ex2-MSMResult}). We conclude that the MMSM performs with higher accuracy in detecting multiple inhomogeneities than the MDSM.
Similarly, the extended circular target can not be recognized via the DSM and MDSM, shown in Figure \ref{Ex3-DSMResult}. This will be addressed in future research.

\begin{figure}[h!]
	\centering
	\subfigure[\label{MSM1-1}$\theta_{1}=0$ and $\theta_{N}=\frac{\pi}{2}$]{\includegraphics[width=0.32\textwidth]{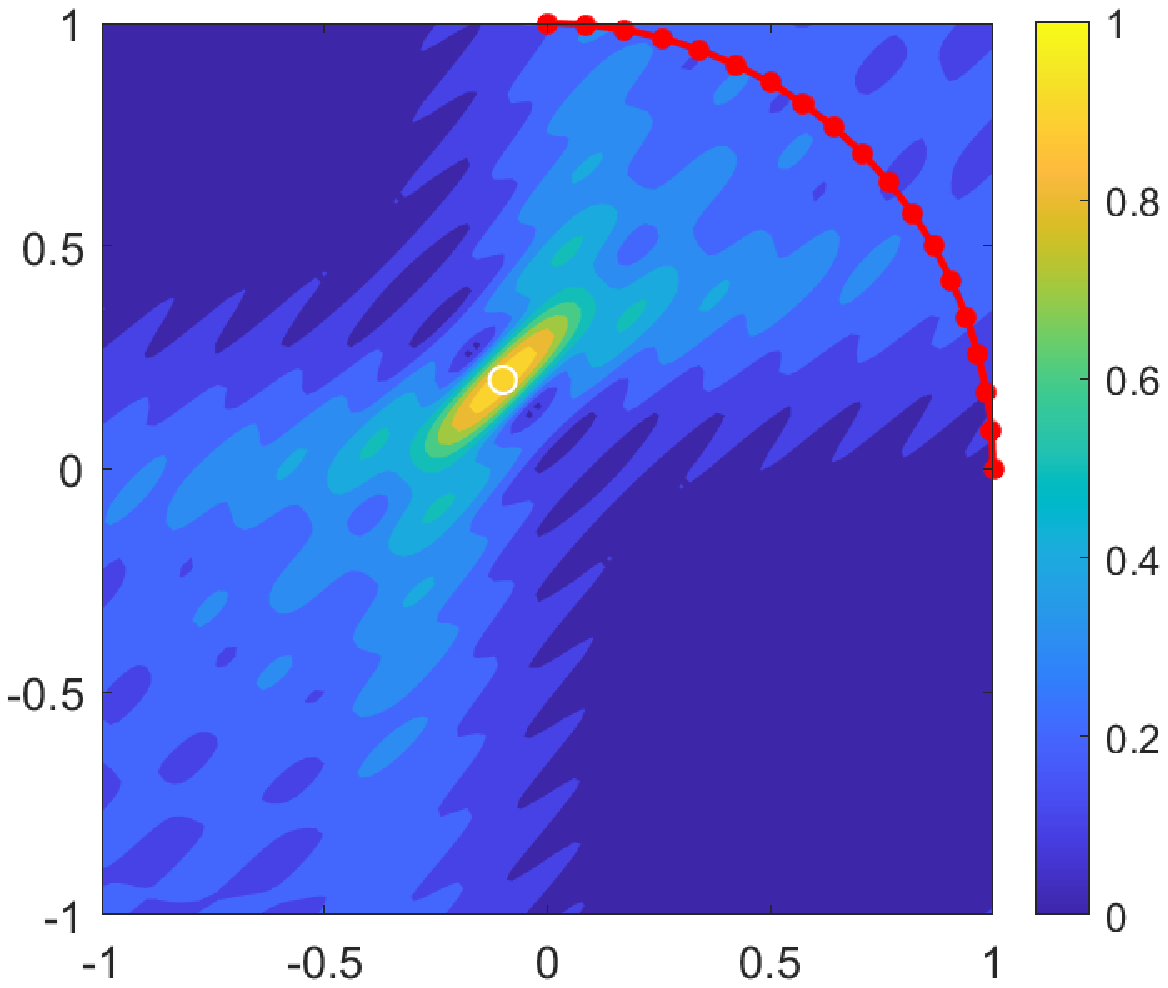}}
	\subfigure[\label{MSM1-2}$\theta_{1}=0$ and $\theta_{N}=\pi$]{\includegraphics[width=0.32\textwidth]{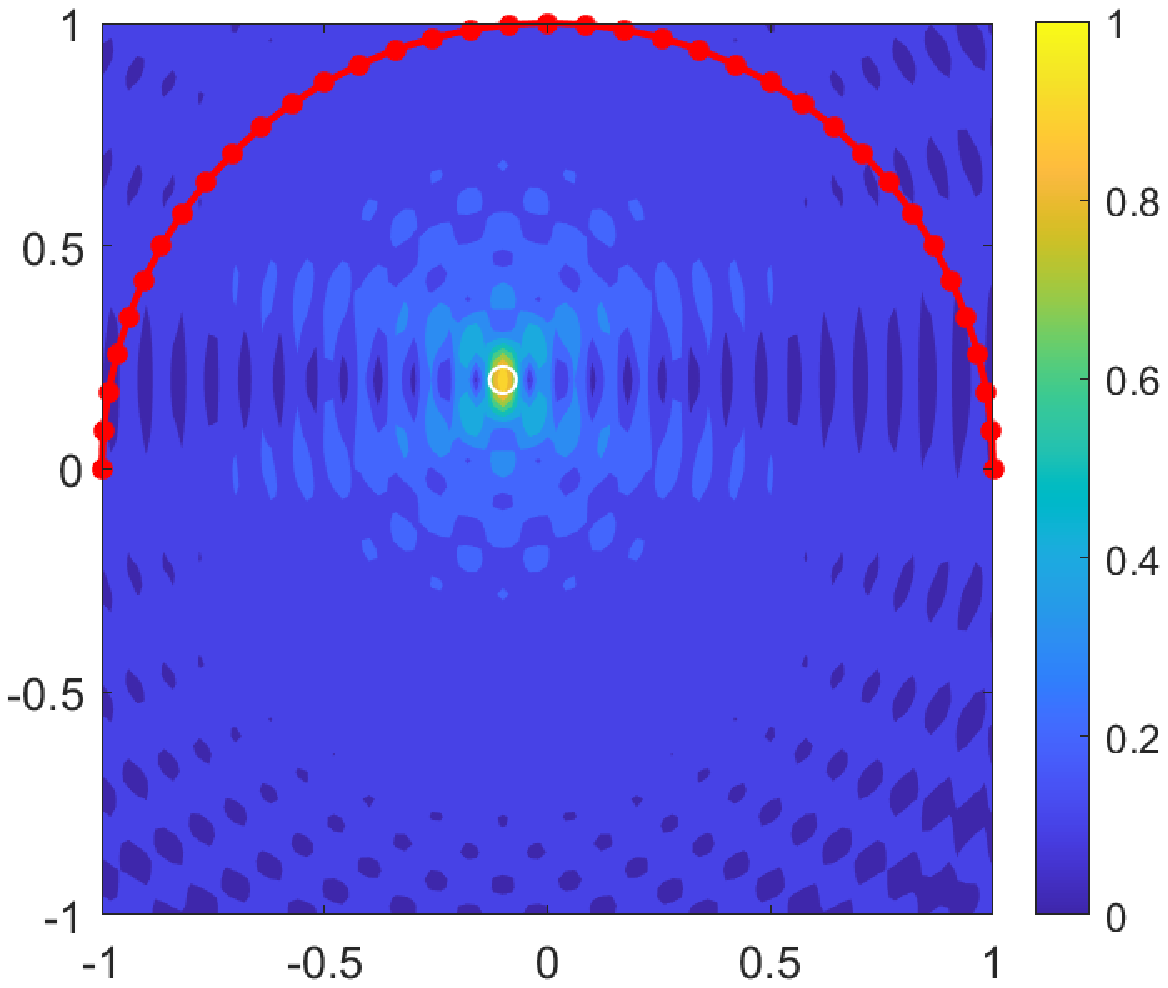}}
	\subfigure[\label{MSM1-3}$\theta_{1}=0$ and $\theta_{N}=\frac{3}{2}\pi$]{\includegraphics[width=0.32\textwidth]{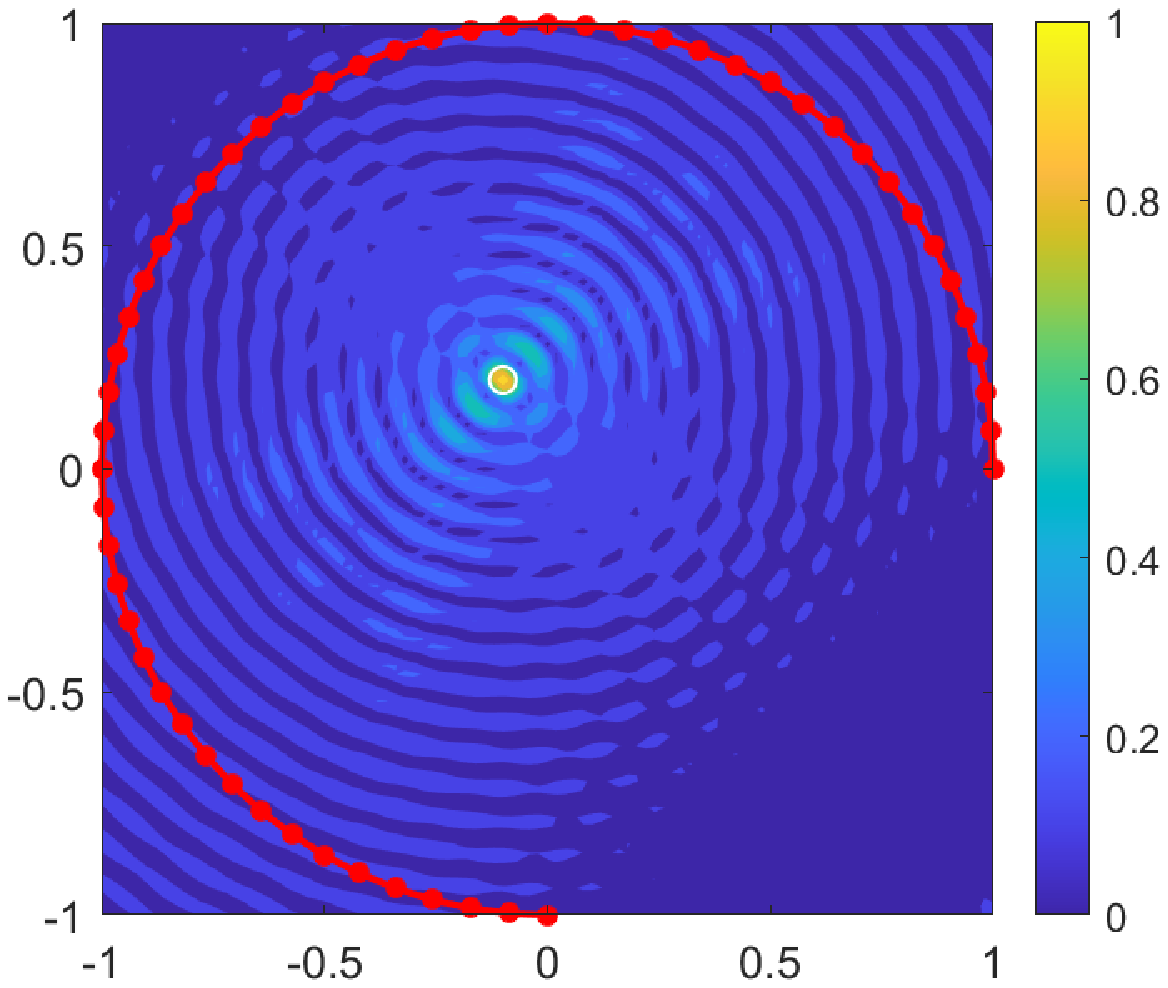}}
	\subfigure[\label{MMSM1-1}$\theta_{1}=0$ and $\theta_{N}=\frac{\pi}{2}$]{\includegraphics[width=0.32\textwidth]{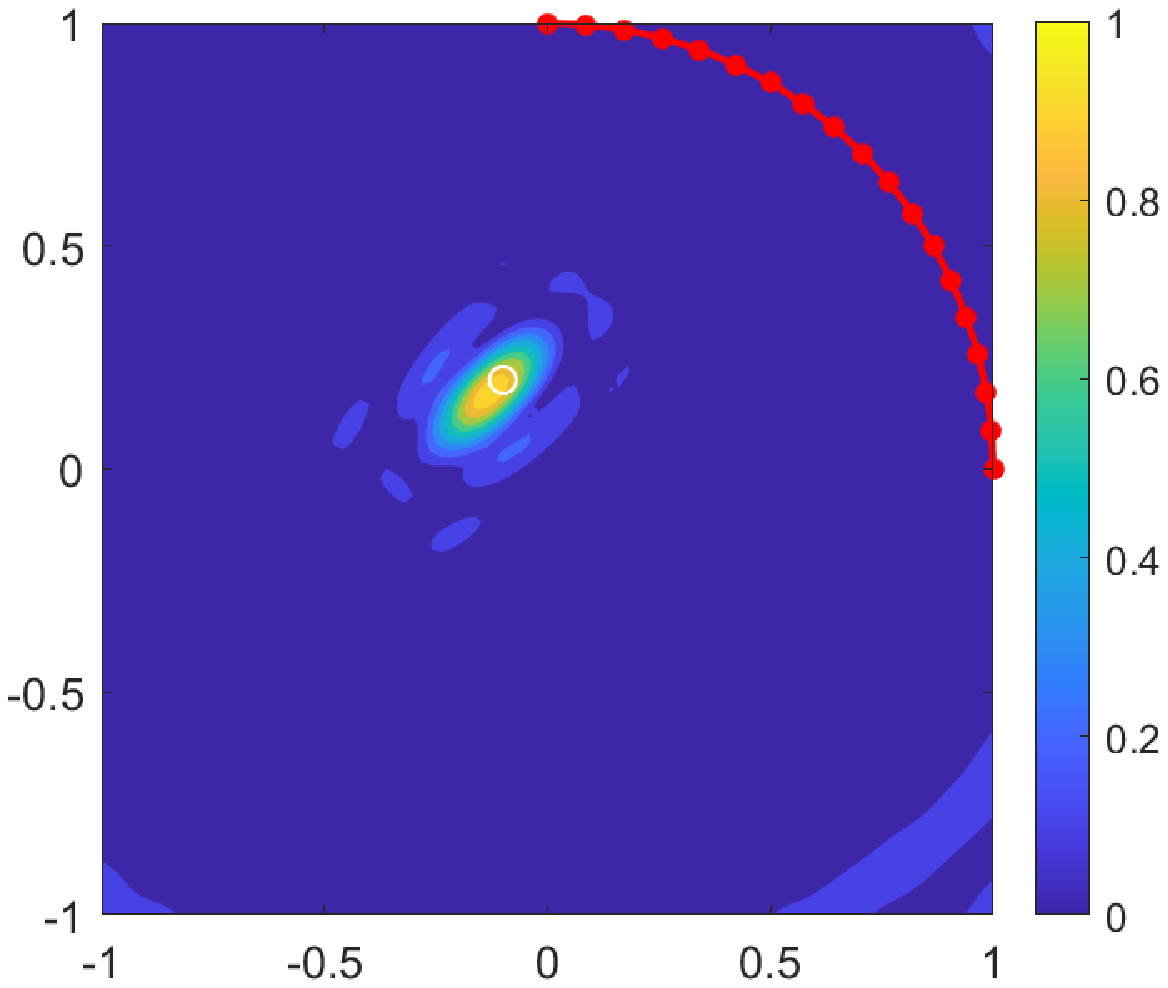}}
	\subfigure[\label{MMSM1-2}$\theta_{1}=0$ and $\theta_{N}=\pi$]{\includegraphics[width=0.32\textwidth]{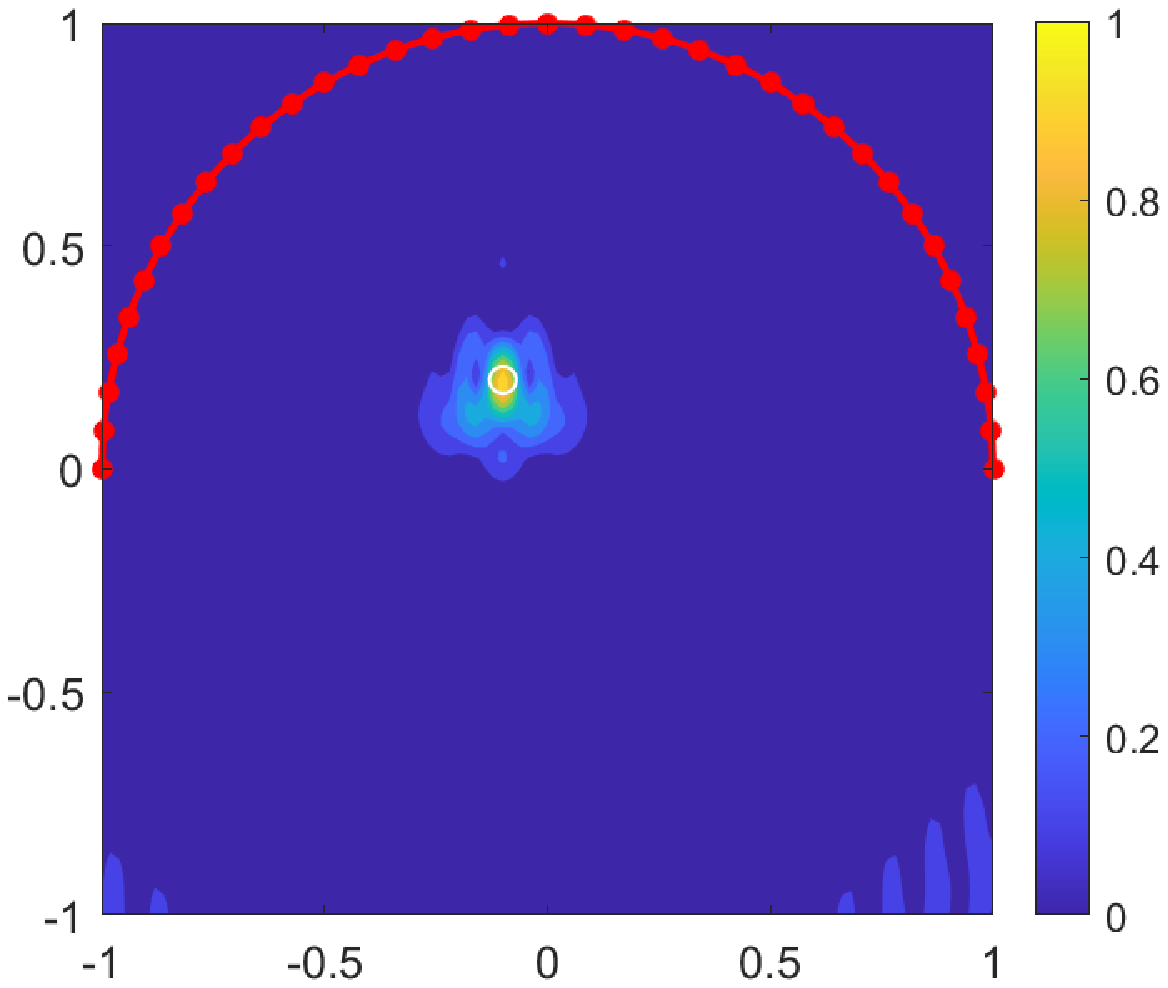}}
	\subfigure[\label{MMSM1-3}$\theta_{1}=0$ and $\theta_{N}=\frac{3}{2}\pi$]{\includegraphics[width=0.32\textwidth]{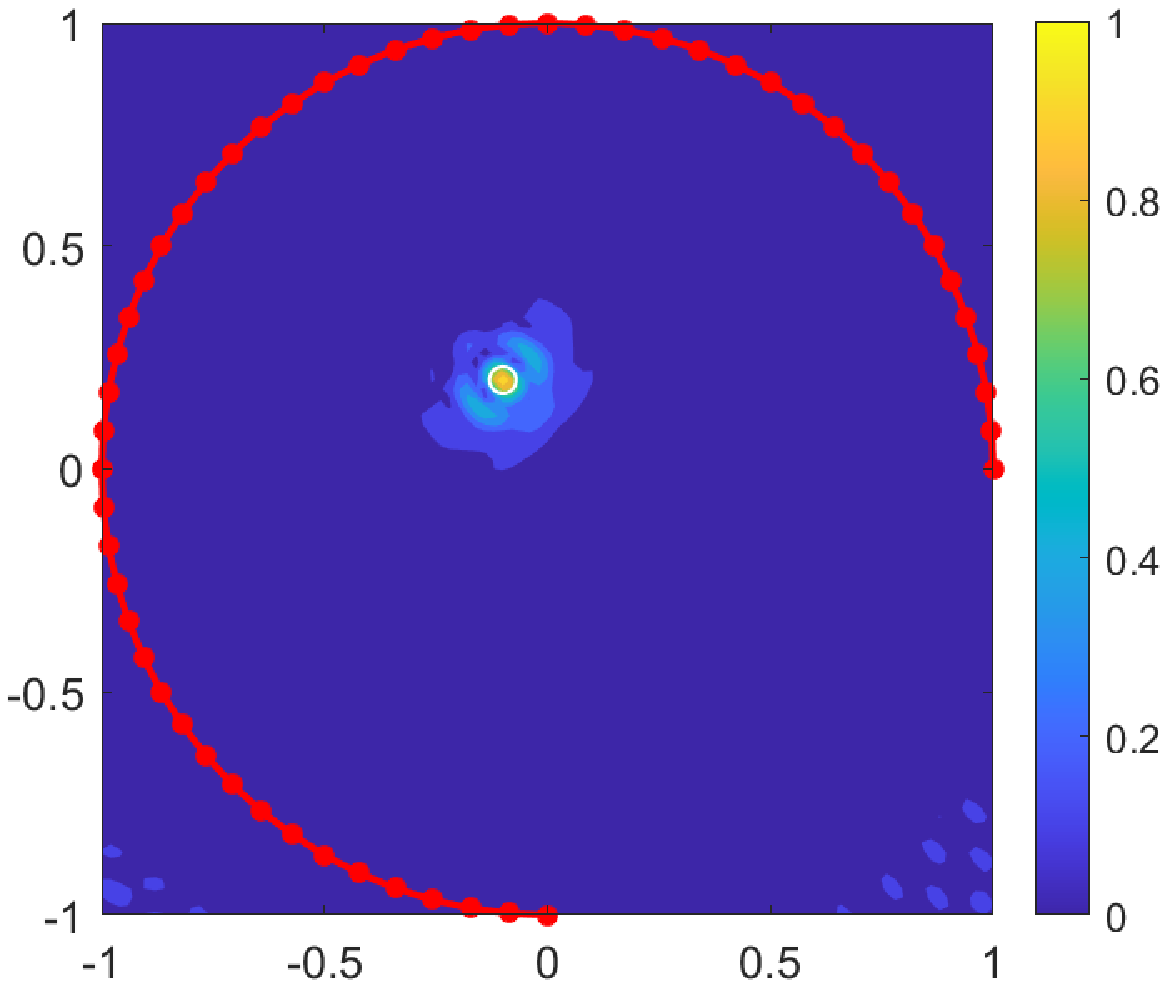}}
	\caption{MSM (top) and MMSM (bottom) for a single small inhomogeneity. The boundary of the target inhomogeneity is indicated in white, and the red dots indicate the measurement angles $\theta_n$. The MSM (with the measurement angle $\geq \pi$) and MMSM successfully detect the inhomogeneity. The MMSM shows less oscillation in the results.}
	\label{Ex1-MSMResult}
	%
	%
	\vskip 1cm
	%
	\centering
	\subfigure[\label{MSM2-1}$\theta_{1}=0$ and $\theta_{N}=\frac{\pi}{2}$]{\includegraphics[width=0.32\textwidth]{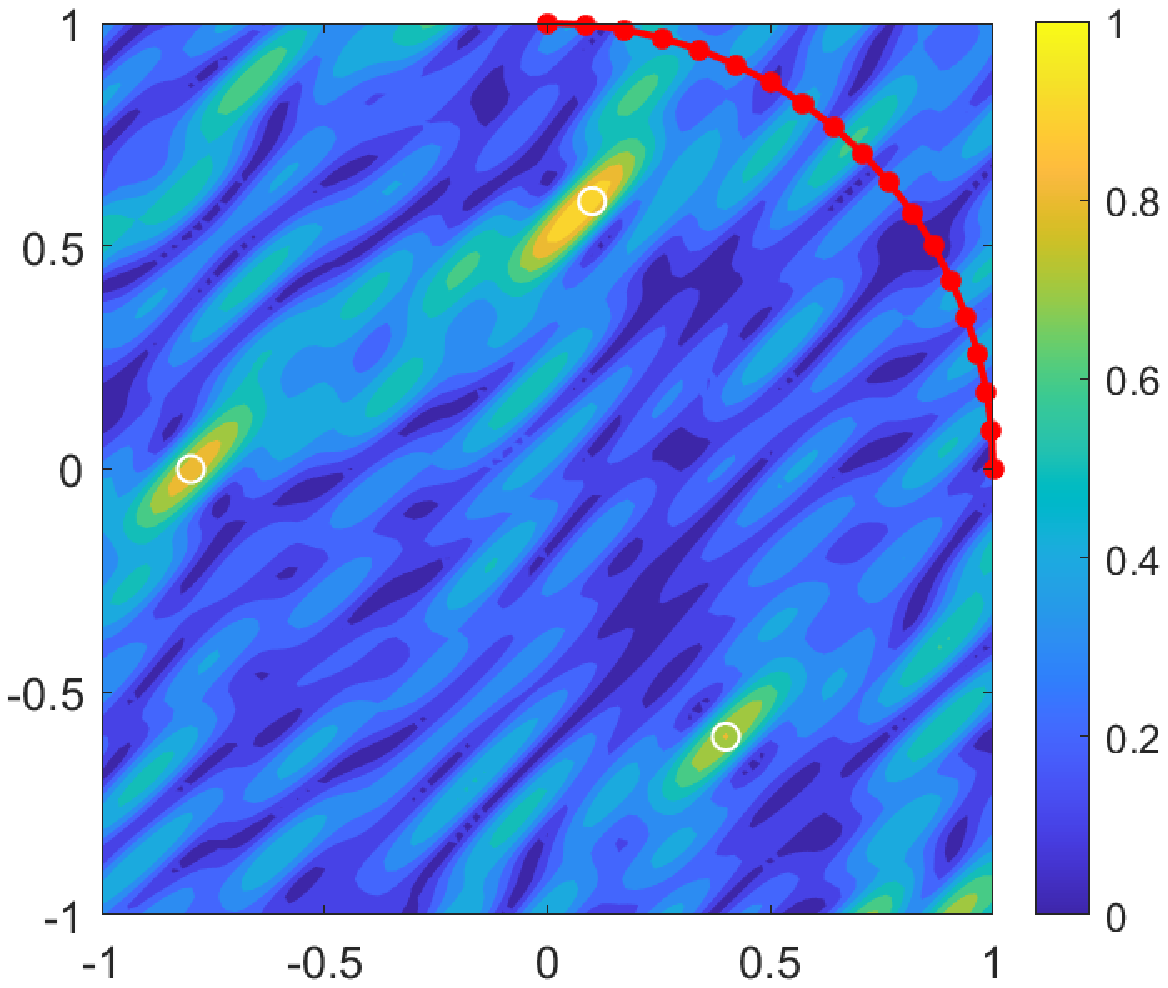}}
	\subfigure[\label{MSM2-2}$\theta_{1}=0$ and $\theta_{N}=\pi$]{\includegraphics[width=0.32\textwidth]{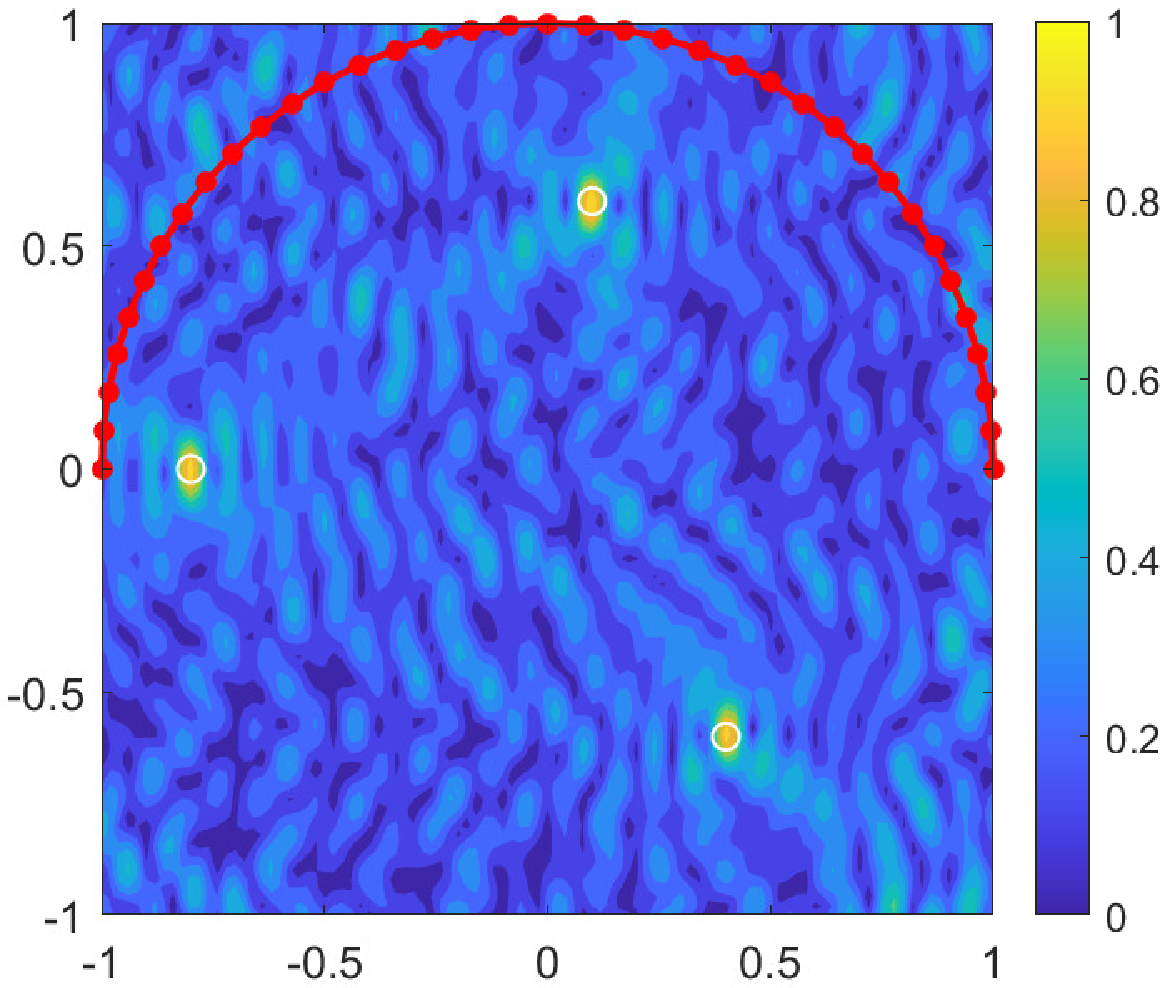}}
	\subfigure[\label{MSM2-3}$\theta_{1}=0$ and $\theta_{N}=\frac{3}{2}\pi$]{\includegraphics[width=0.32\textwidth]{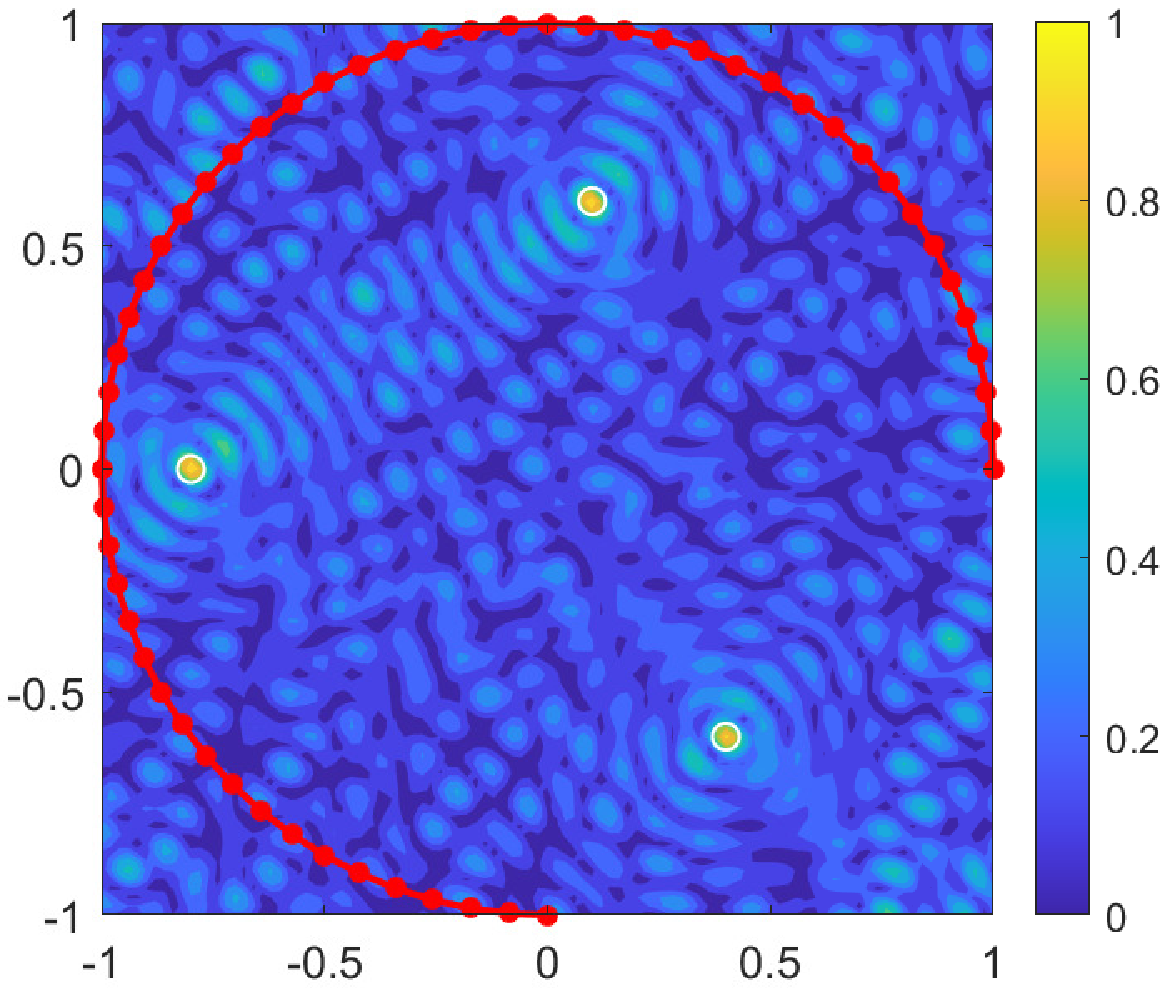}}
	\subfigure[\label{MMSM2-1}$\theta_{1}=0$ and $\theta_{N}=\frac{\pi}{2}$]{\includegraphics[width=0.32\textwidth]{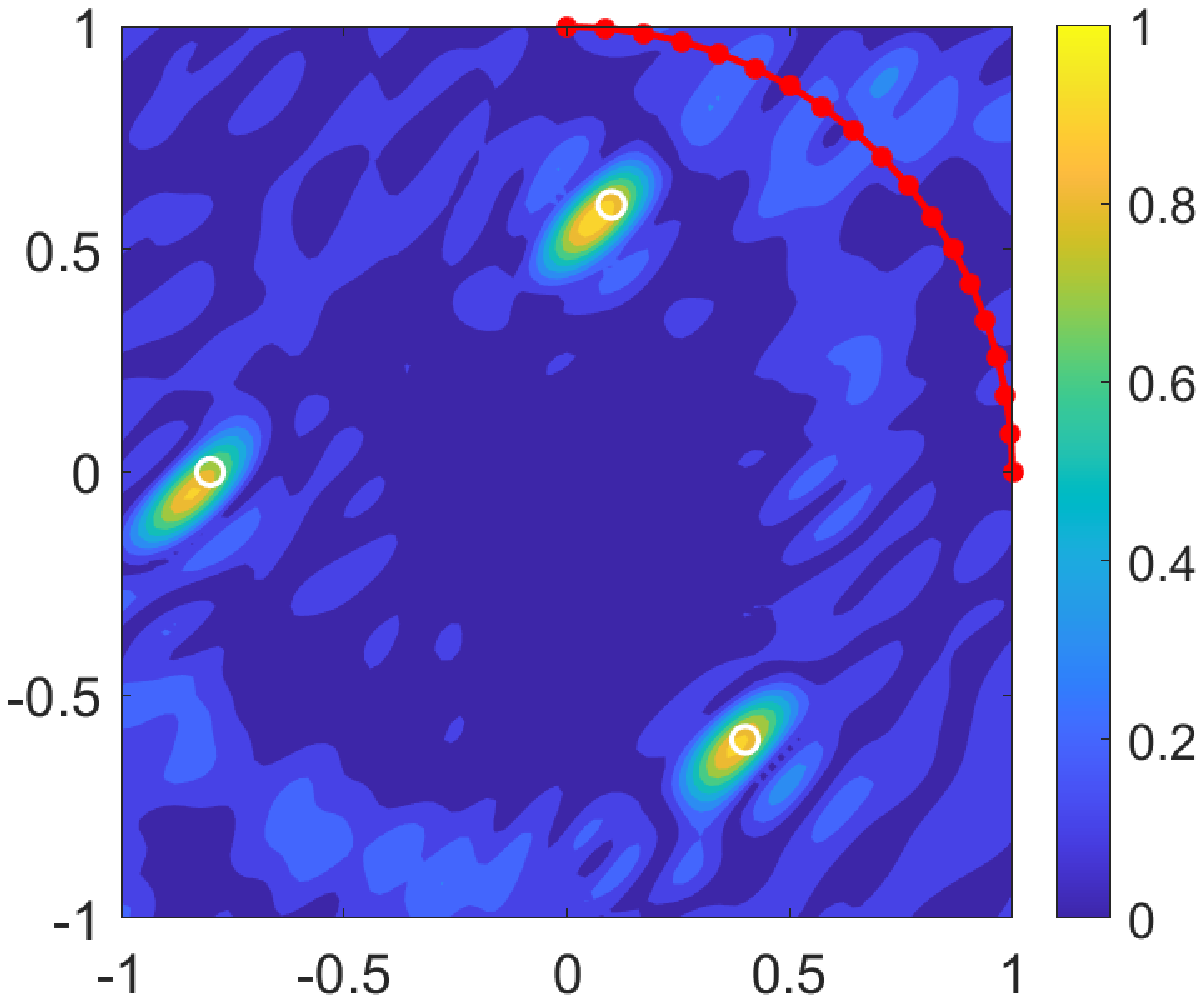}}
	\subfigure[\label{MMSM2-2}$\theta_{1}=0$ and $\theta_{N}=\pi$]{\includegraphics[width=0.32\textwidth]{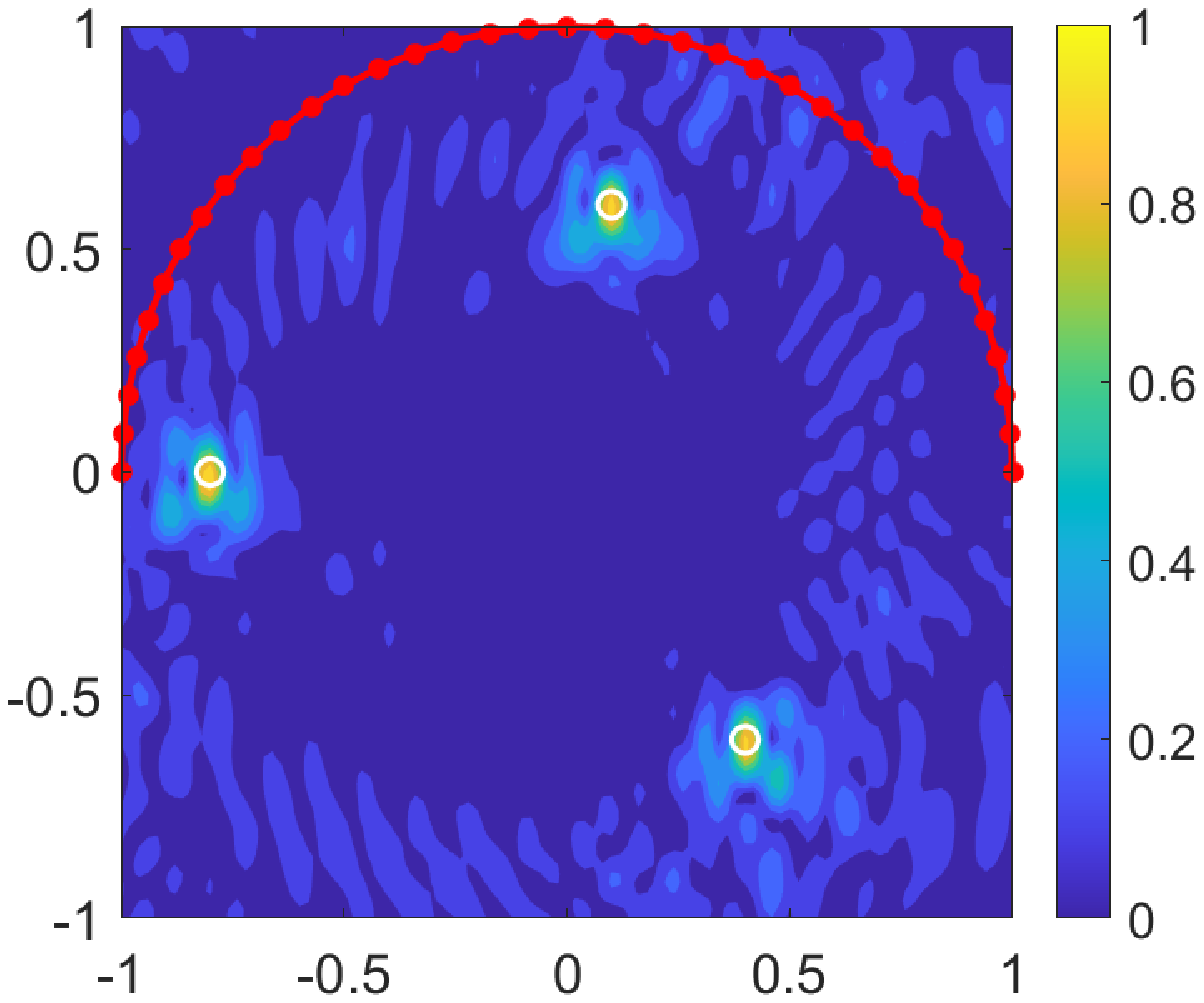}}
	\subfigure[\label{MMSM2-3}$\theta_{1}=0$ and $\theta_{N}=\frac{3}{2}\pi$]{\includegraphics[width=0.32\textwidth]{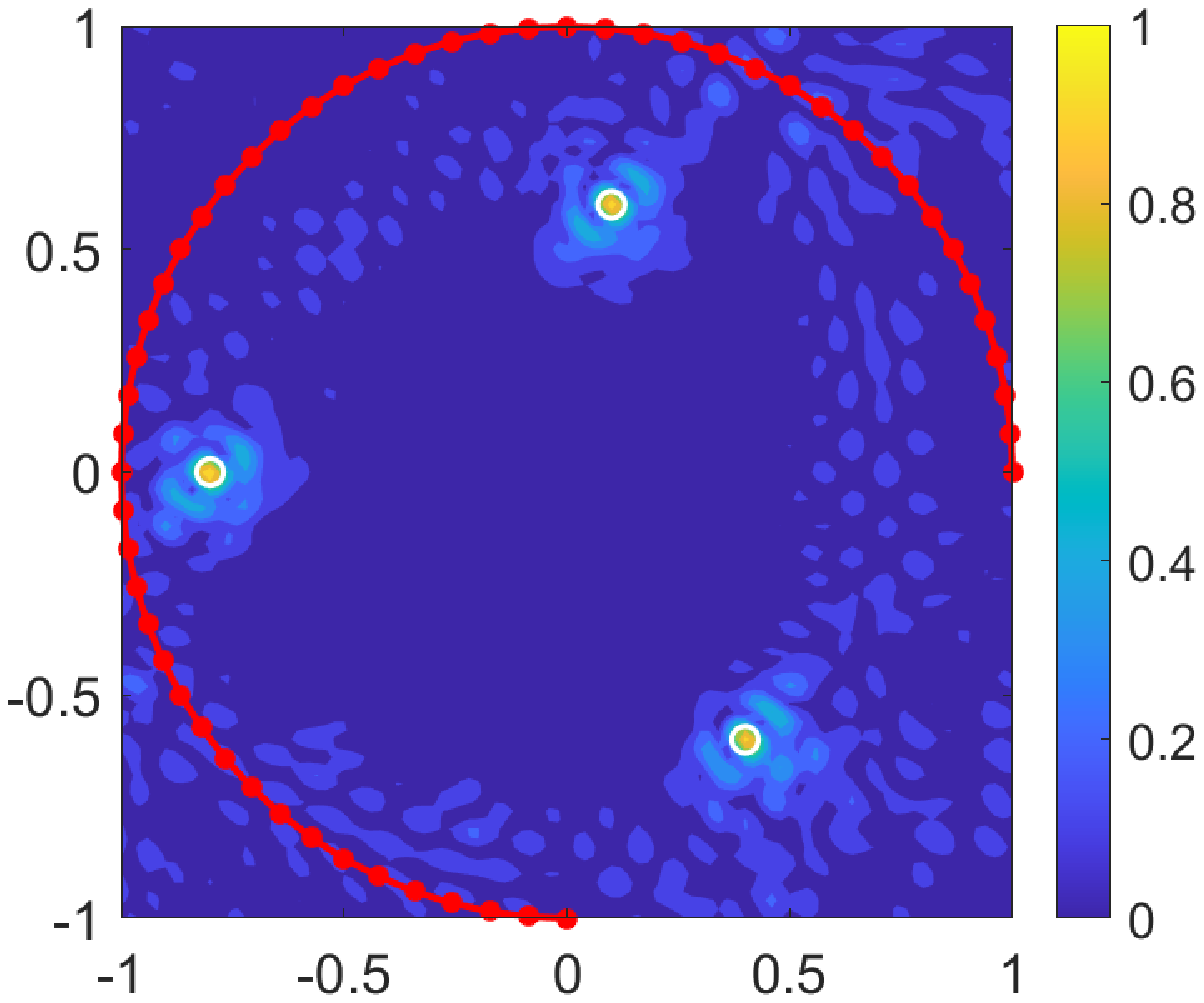}}
	\caption{MSM (top) and MMSM (bottom) for multiple small inhomogeneities. The MMSM clearly detects all three inhomogeneities.}
	\label{Ex2-MSMResult}
\end{figure}

\begin{figure}[h!]
	\centering
	\subfigure[\label{MSM2-1-1}$\theta_{1}=0$ and $\theta_{N}=\frac{\pi}{2}$]{\includegraphics[width=0.32\textwidth]{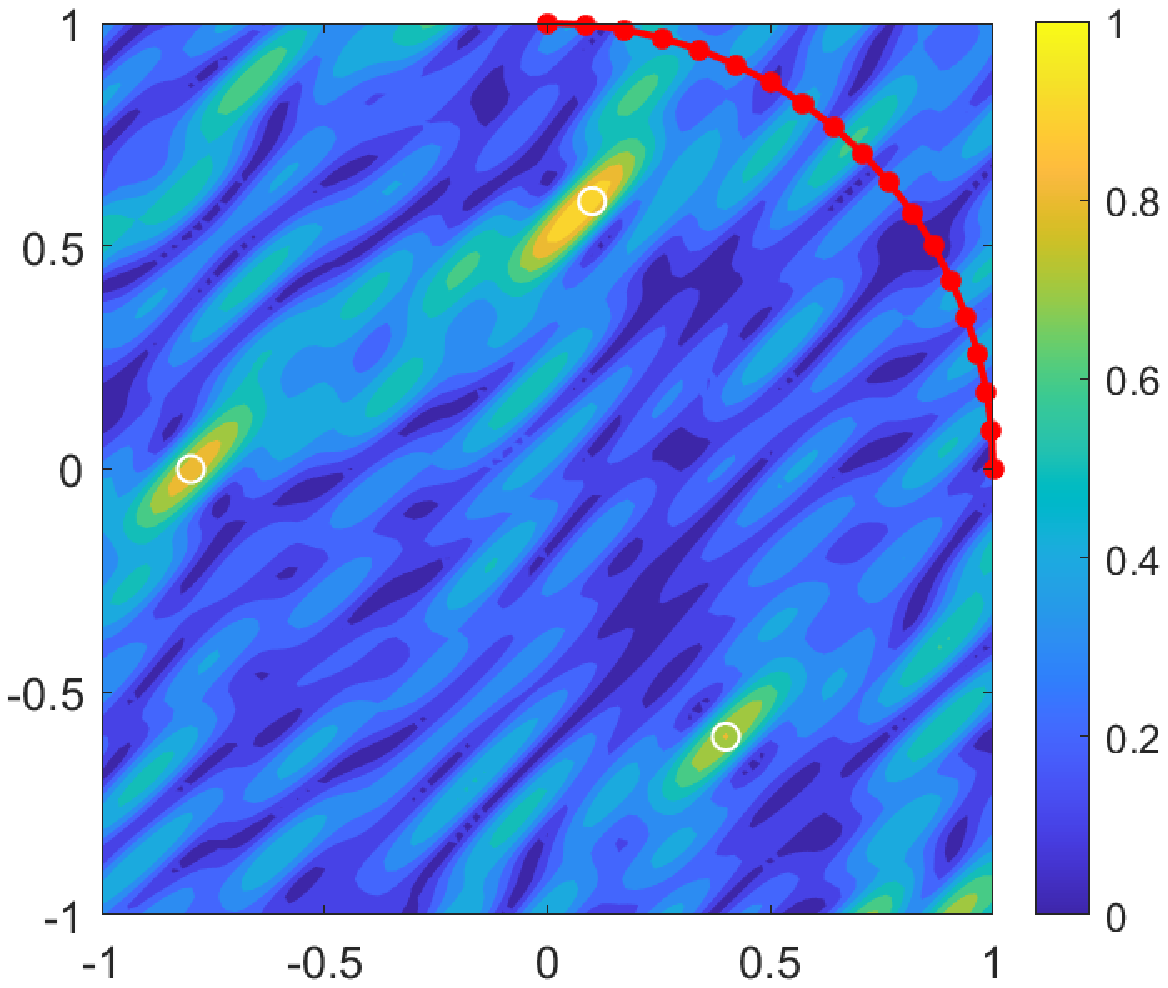}}
	\subfigure[\label{MSM2-1-2}$\theta_{1}=0$ and $\theta_{N}=\pi$]{\includegraphics[width=0.32\textwidth]{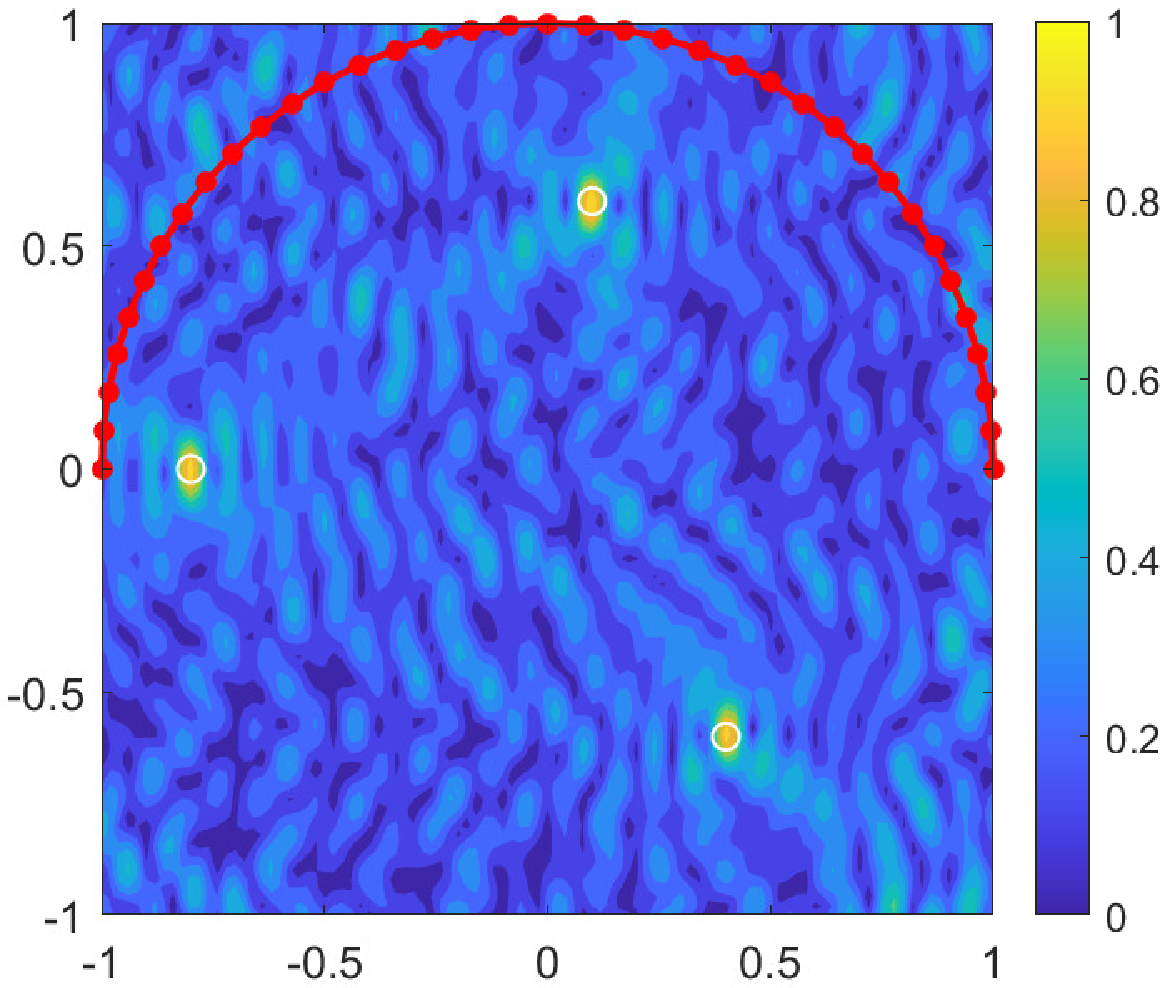}}
	\subfigure[\label{MSM2-1-3}$\theta_{1}=0$ and $\theta_{N}=\frac{3}{2}\pi$]{\includegraphics[width=0.32\textwidth]{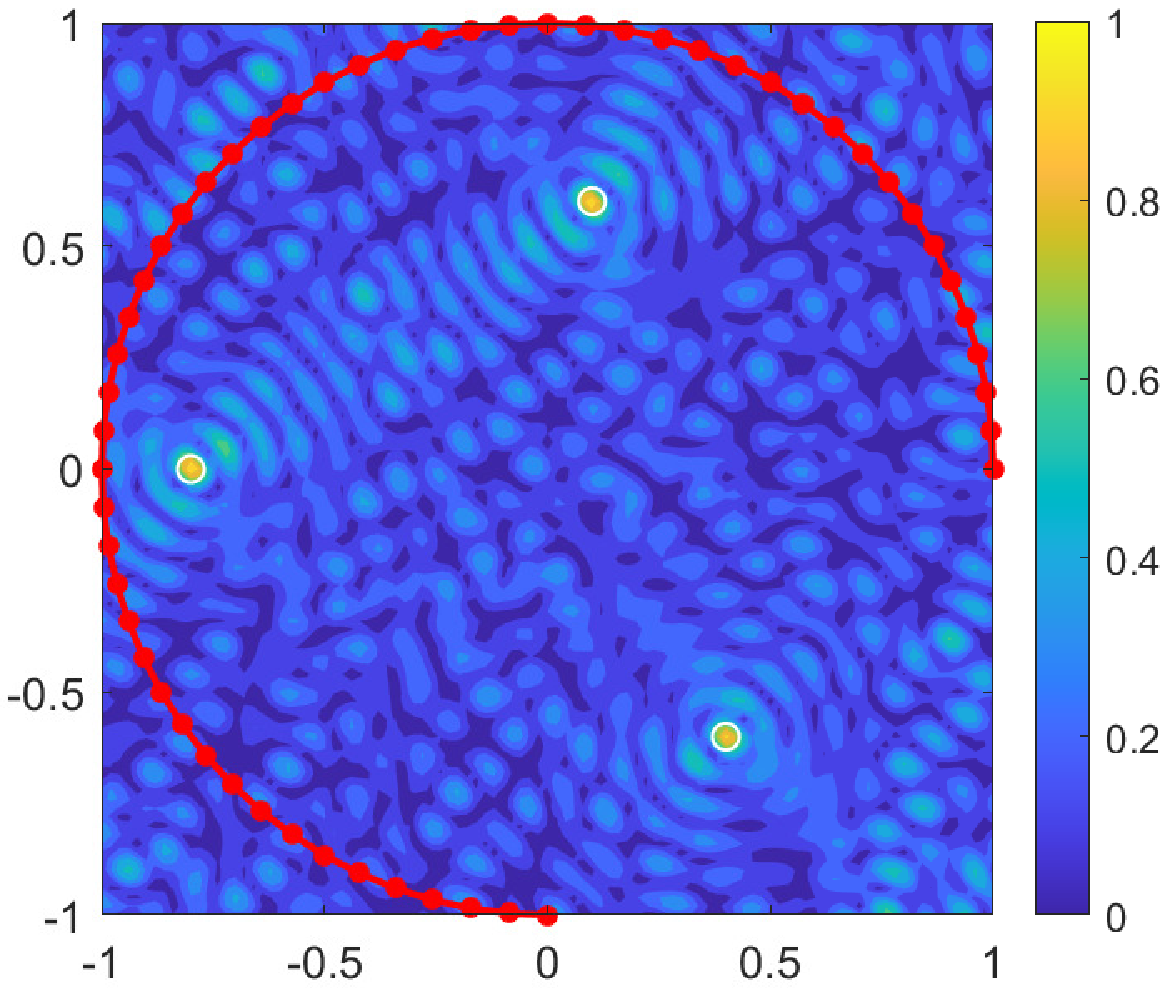}}
	\subfigure[\label{MMSM2-1-1}$\theta_{1}=0$ and $\theta_{N}=\frac{\pi}{2}$]{\includegraphics[width=0.32\textwidth]{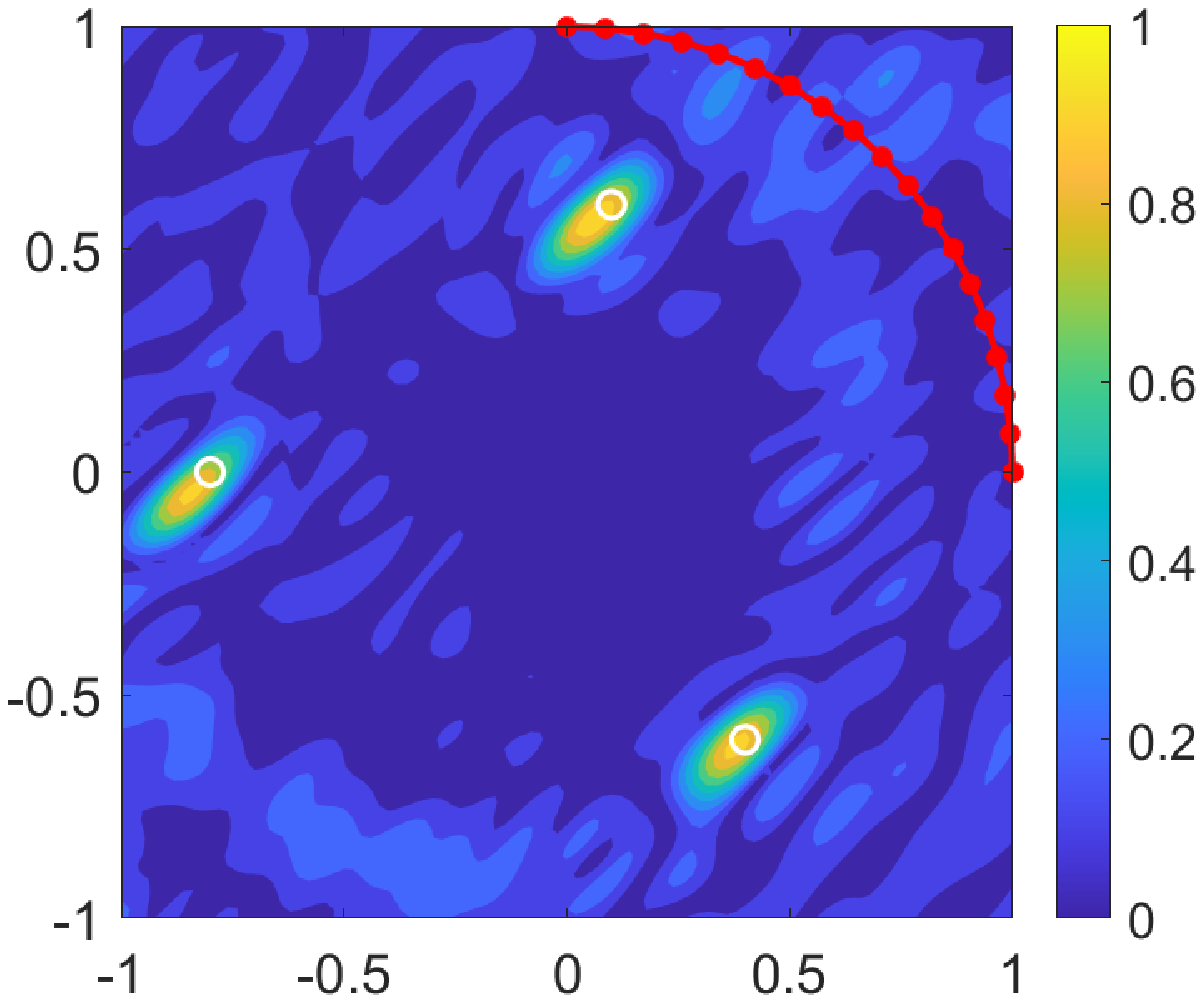}}
	\subfigure[\label{MMSM2-1-2}$\theta_{1}=0$ and $\theta_{N}=\pi$]{\includegraphics[width=0.32\textwidth]{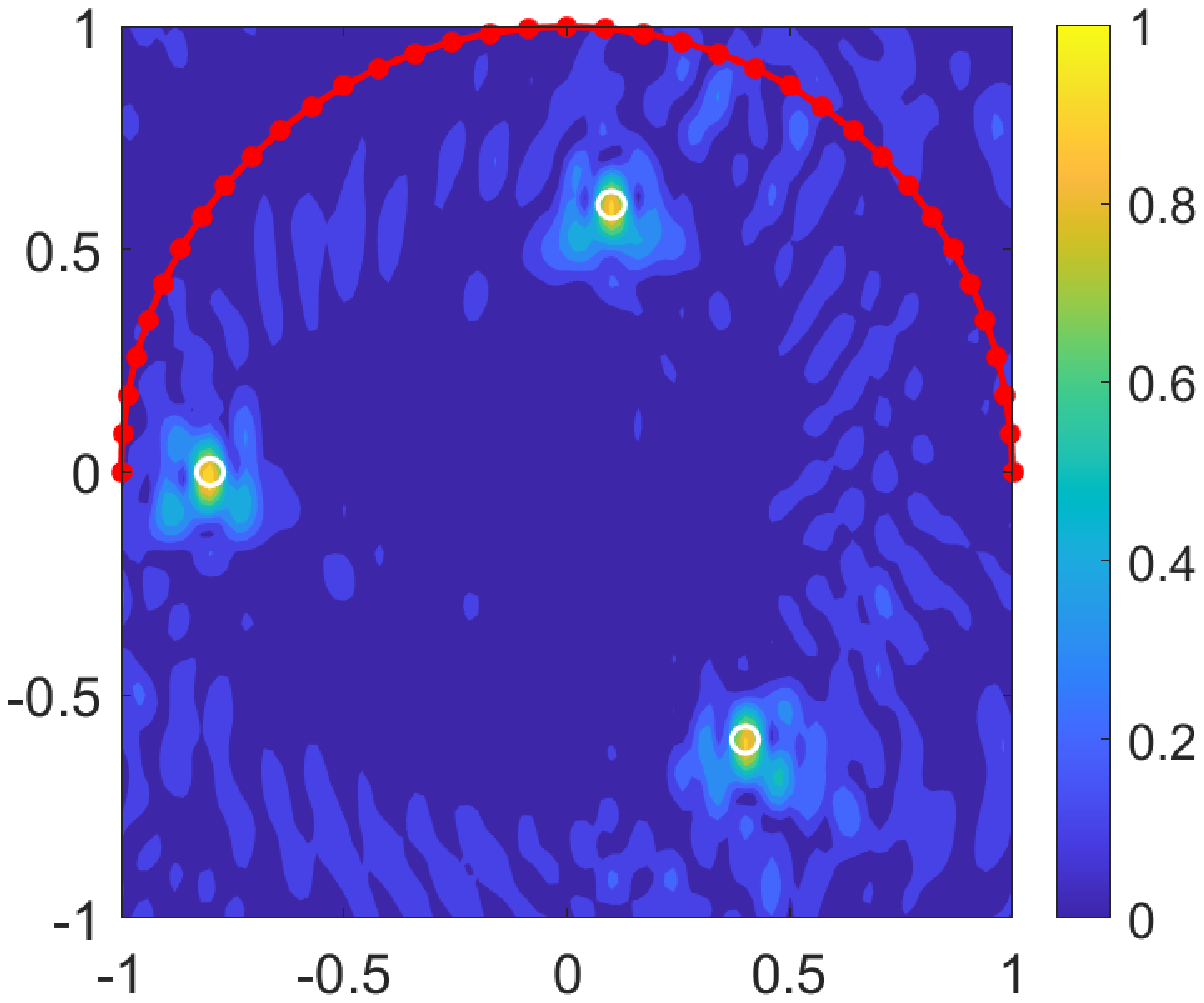}}
	\subfigure[\label{MMSM2-1-3}$\theta_{1}=0$ and $\theta_{N}=\frac{3}{2}\pi$]{\includegraphics[width=0.32\textwidth]{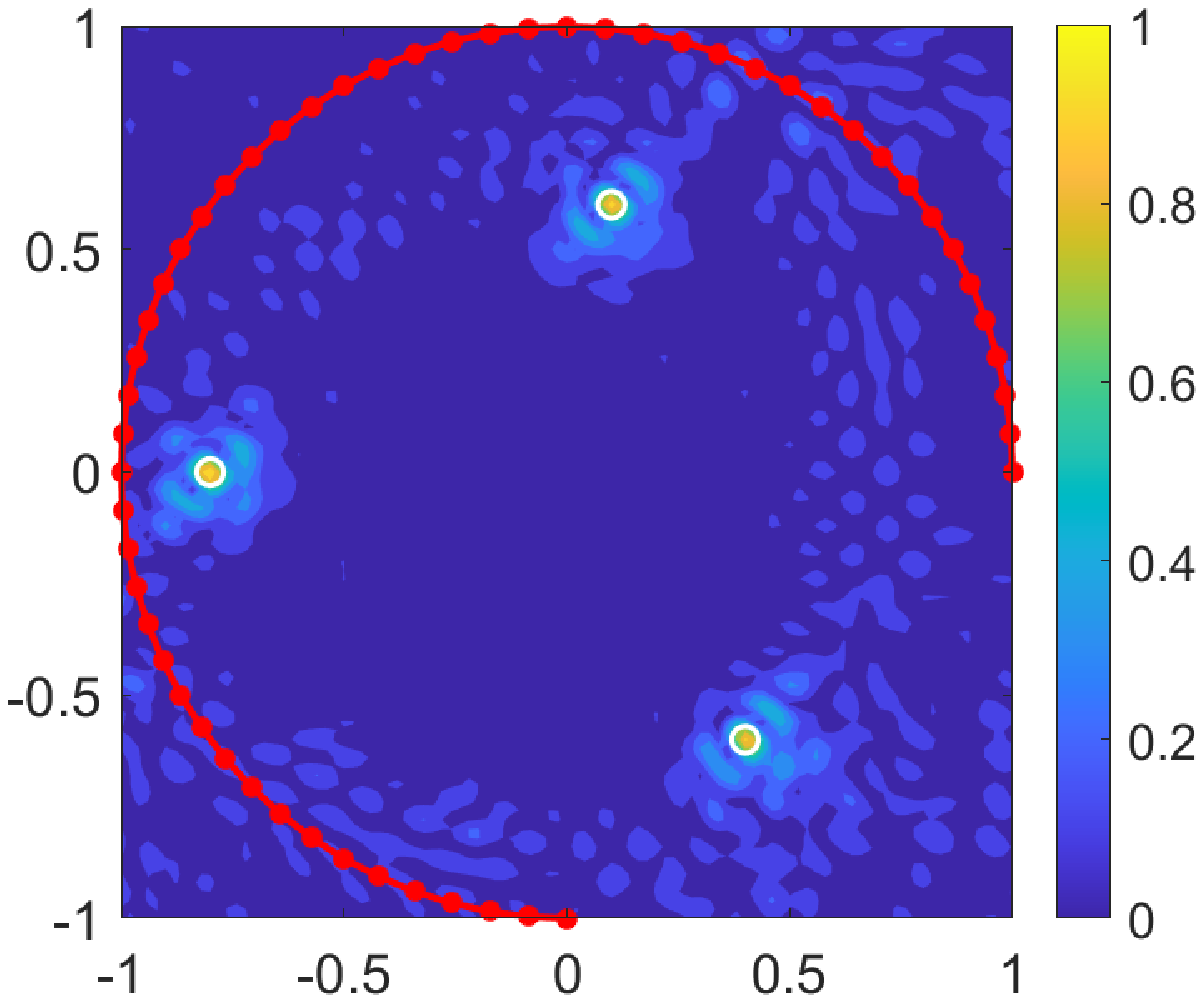}}
	\caption{MSM (top) and MMSM (bottom) for multiple small inhomogeneities with \SI{10}{\dB} Gaussian random noise. The MSM and MMSM have the similar imaging performance compared to Figure \ref{Ex2-MSMResult} (\SI{20}{\dB} noise)}
	\label{Ex2-1-MSMResult}
	%
	%
	\vskip 1cm
	
	\centering
	\subfigure[\label{MSM3-1}$\theta_{1}=0$ and $\theta_{N}=\frac{\pi}{2}$]{\includegraphics[width=0.32\textwidth]{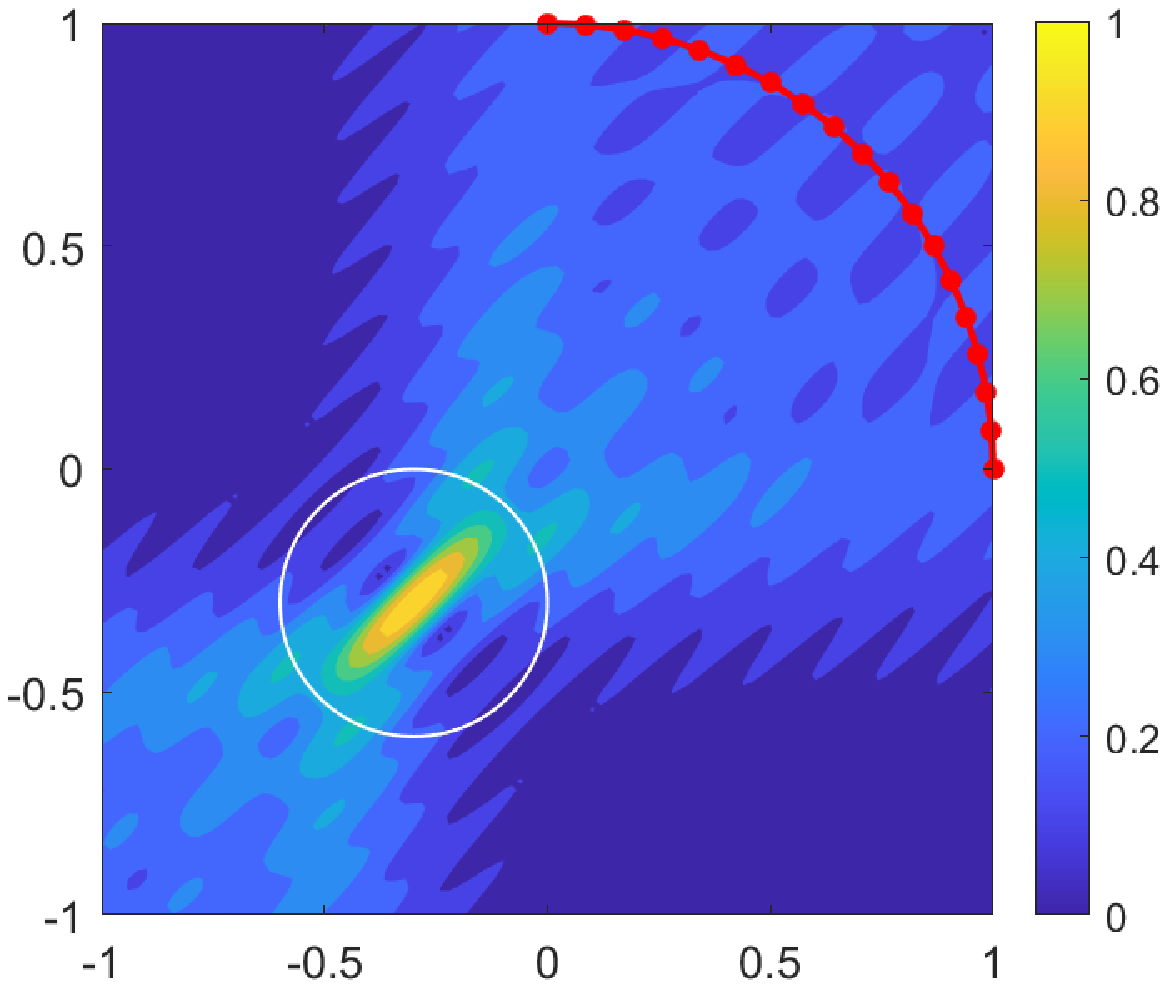}}
	\subfigure[\label{MSM3-2}$\theta_{1}=0$ and $\theta_{N}=\pi$]{\includegraphics[width=0.32\textwidth]{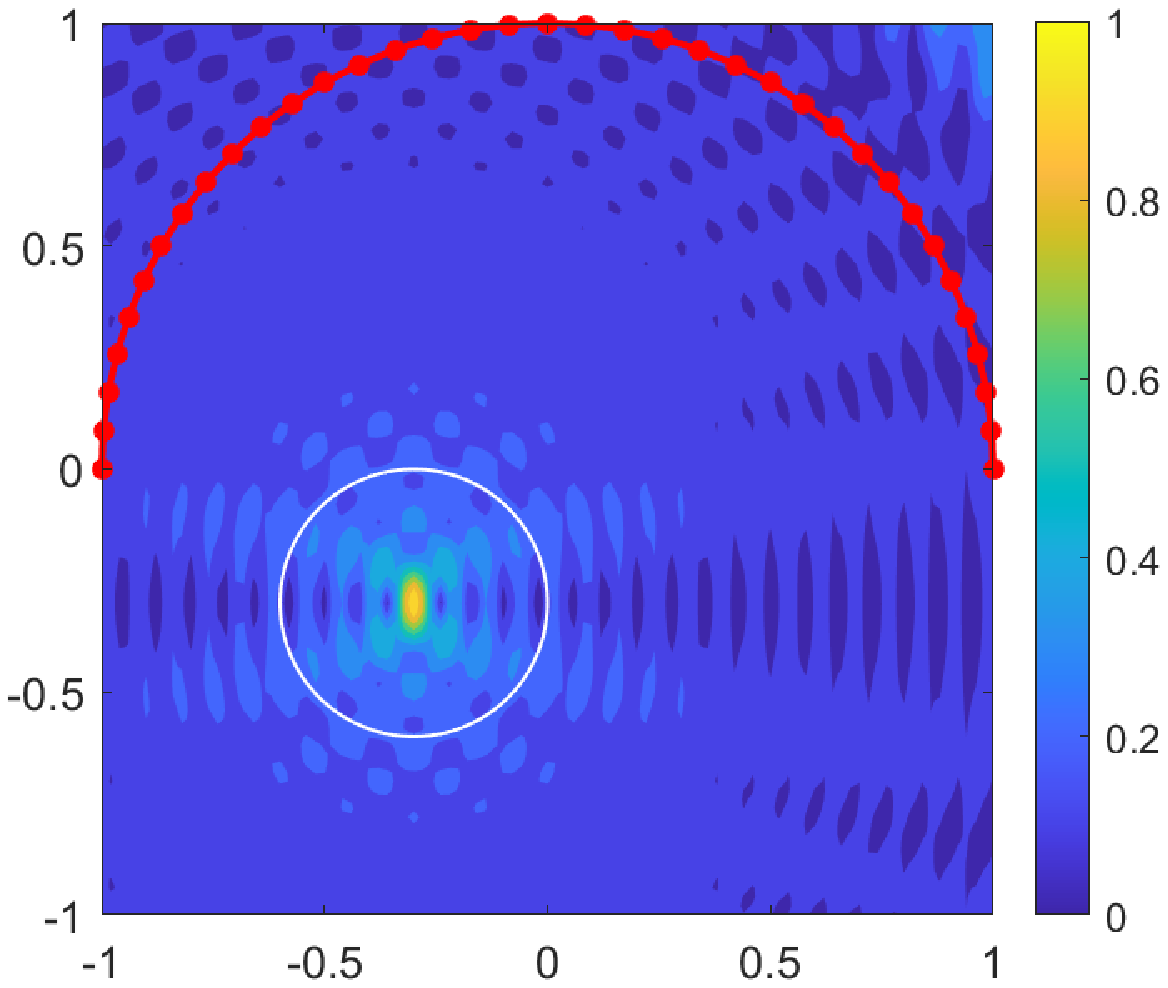}}
	\subfigure[\label{MSM3-3}$\theta_{1}=0$ and $\theta_{N}=\frac{3}{2}\pi$]{\includegraphics[width=0.32\textwidth]{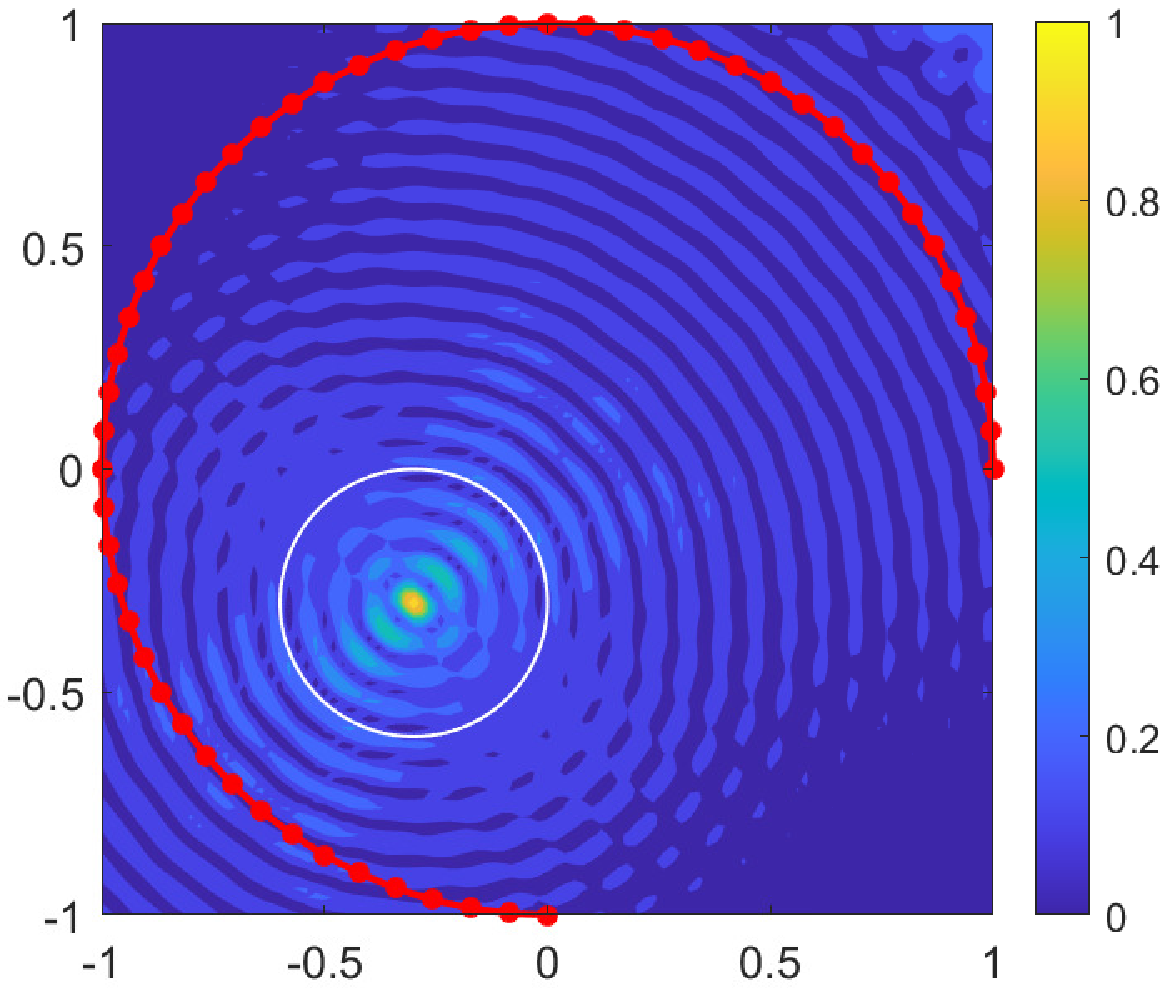}}
	\subfigure[\label{MMSM3-1}$\theta_{1}=0$ and $\theta_{N}=\frac{\pi}{2}$]{\includegraphics[width=0.32\textwidth]{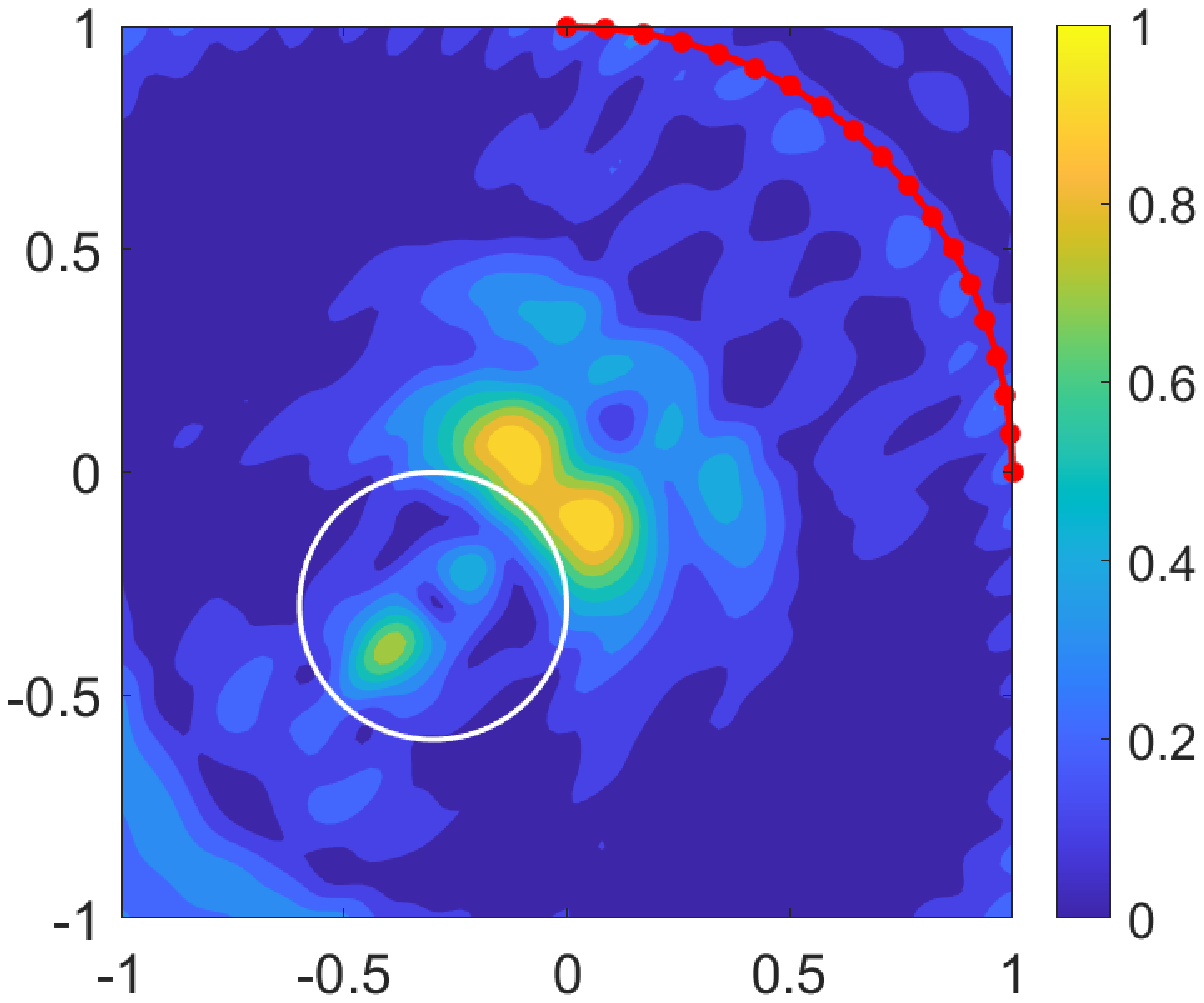}}
	\subfigure[\label{MMSM3-2}$\theta_{1}=0$ and $\theta_{N}=\pi$]{\includegraphics[width=0.32\textwidth]{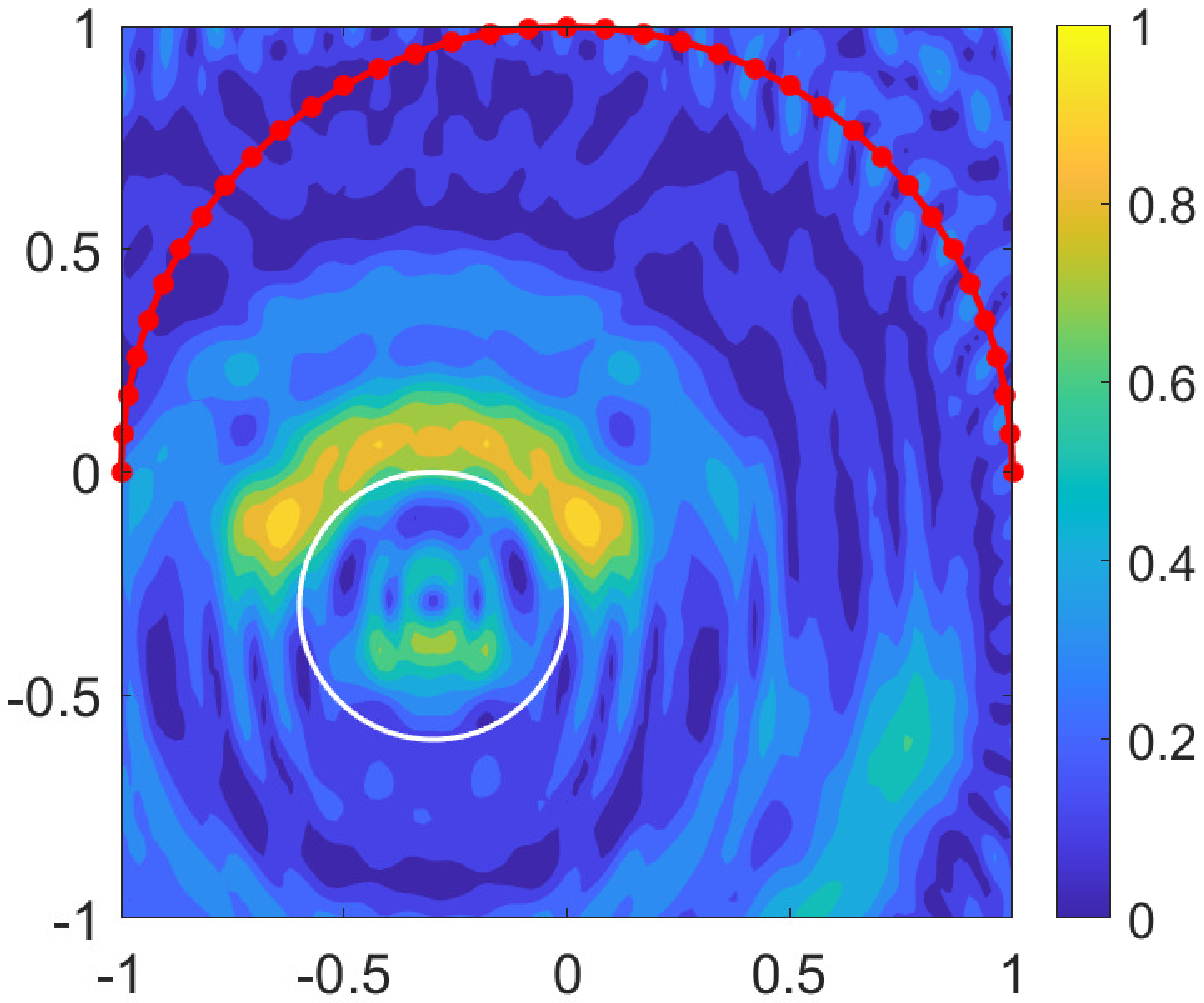}}
	\subfigure[\label{MMSM3-3}$\theta_{1}=0$ and $\theta_{N}=\frac{3}{2}\pi$]{\includegraphics[width=0.32\textwidth]{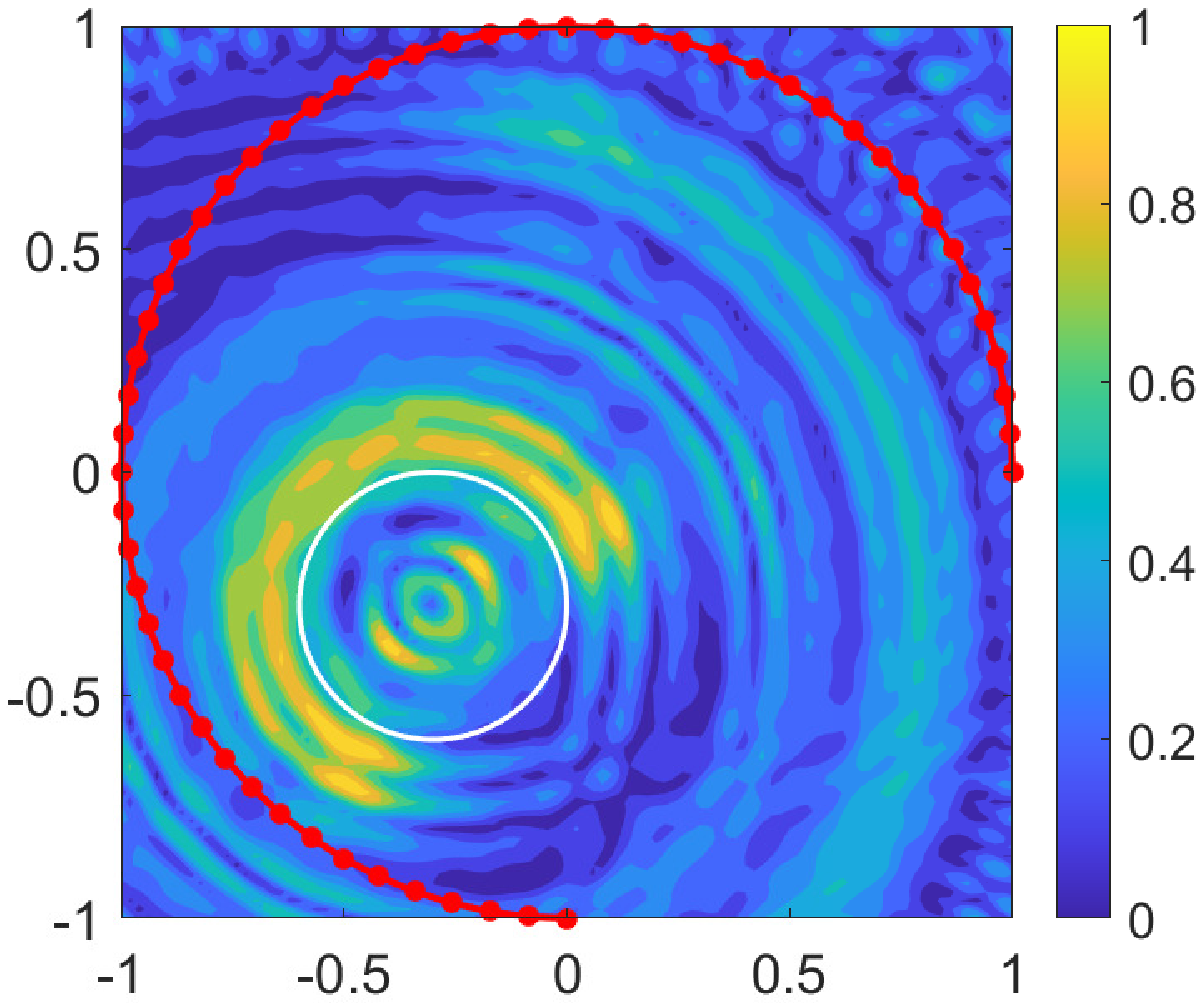}}
	\caption{MSM (top) and MMSM (bottom) for an extended target. The MSM (with single frequency measurement) can identify the center of the inhomogeneity.}
	\label{Ex3-MSMResult}
\end{figure}

\begin{figure}[h!]
	\centering
	\subfigure[\label{DSM1-1}$\theta_{1}=0$ and $\theta_{N}=\frac{\pi}{2}$]{\includegraphics[width=0.32\textwidth]{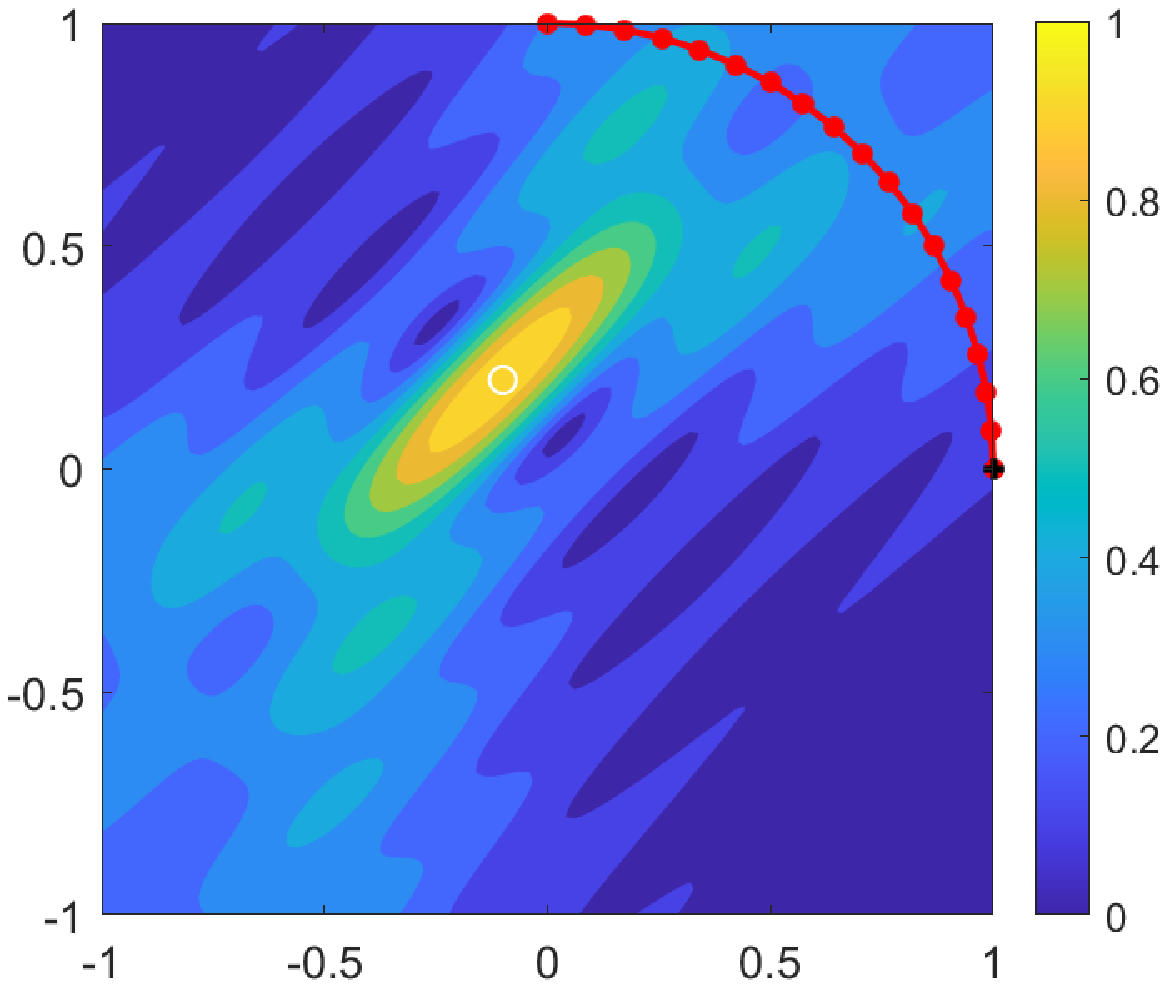}}
	\subfigure[\label{DSM1-2}$\theta_{1}=0$ and $\theta_{N}=\pi$]{\includegraphics[width=0.32\textwidth]{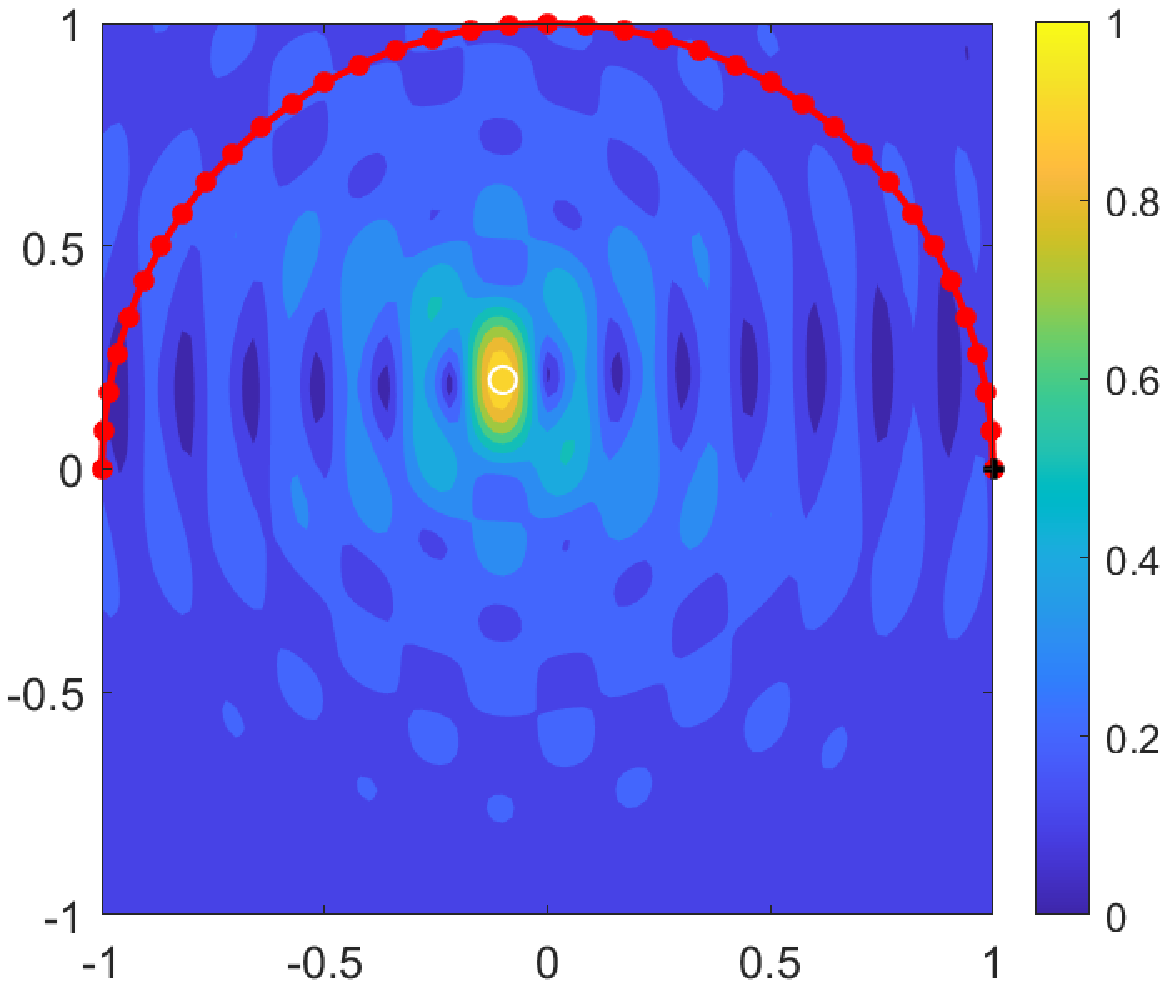}}
	\subfigure[\label{DSM1-3}$\theta_{1}=0$ and $\theta_{N}=\frac{3}{2}\pi$]{\includegraphics[width=0.32\textwidth]{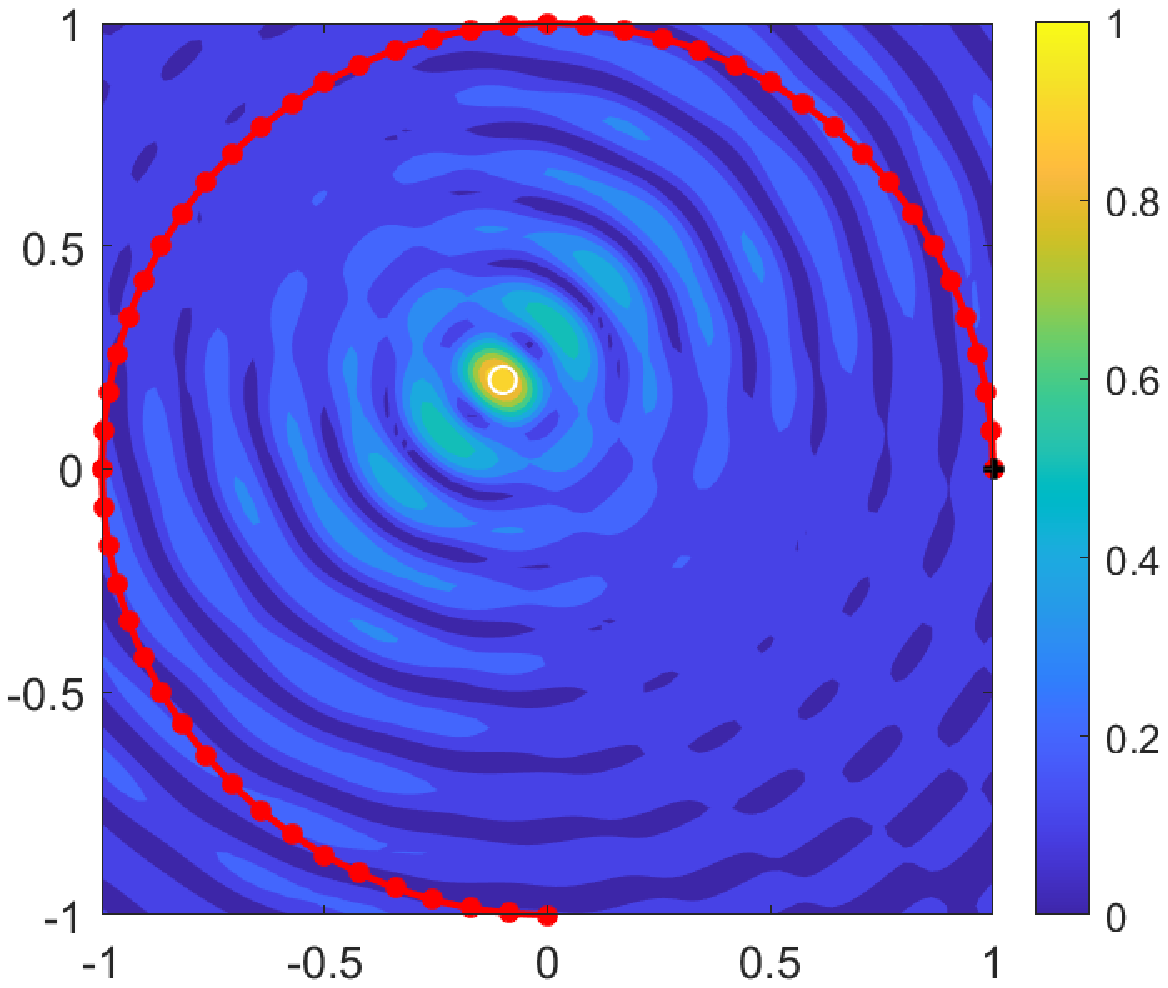}}
	\subfigure[\label{MDSM1-1}$\theta_{1}=0$ and $\theta_{N}=\frac{\pi}{2}$]{\includegraphics[width=0.32\textwidth]{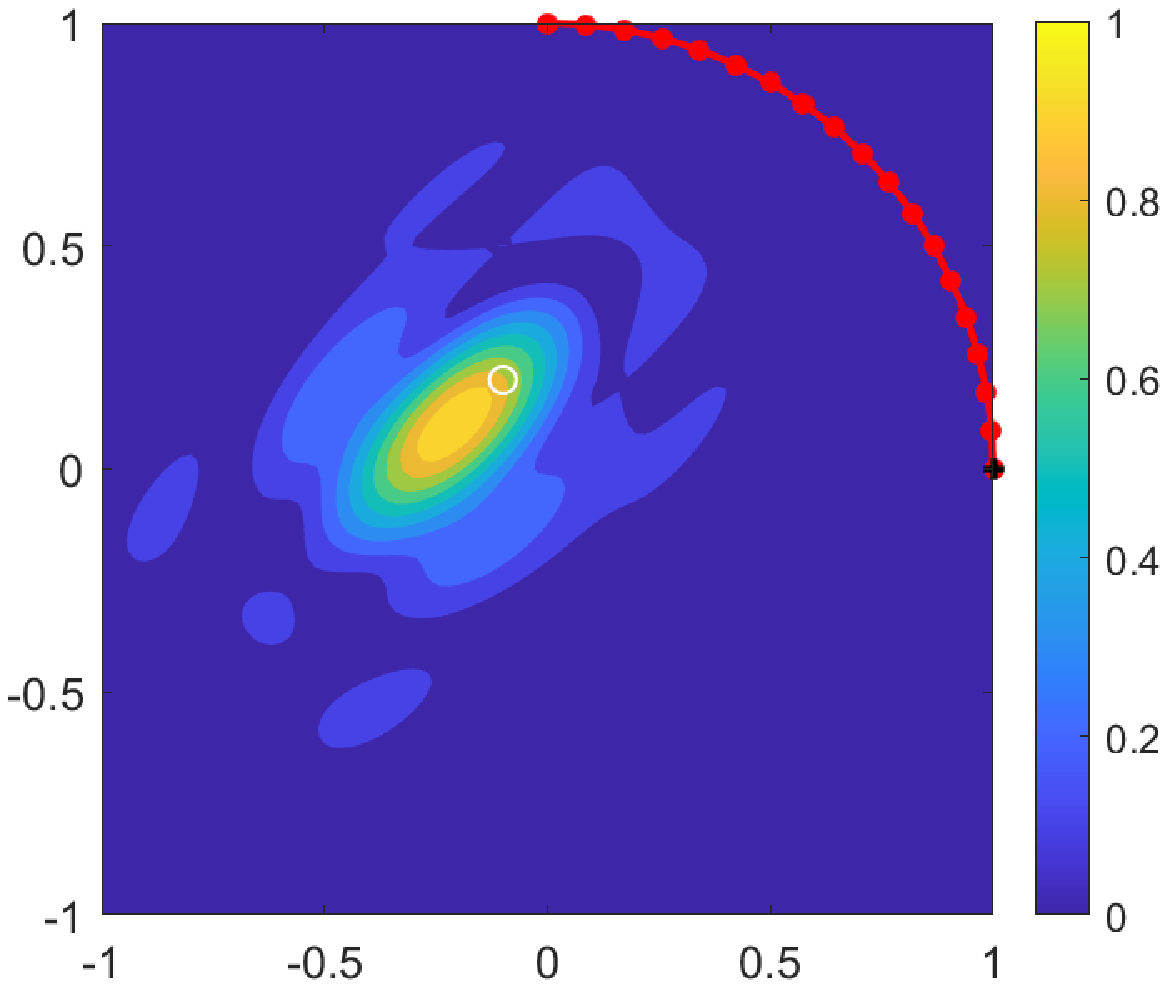}}
	\subfigure[\label{MDSM1-2}$\theta_{1}=0$ and $\theta_{N}=\pi$]{\includegraphics[width=0.32\textwidth]{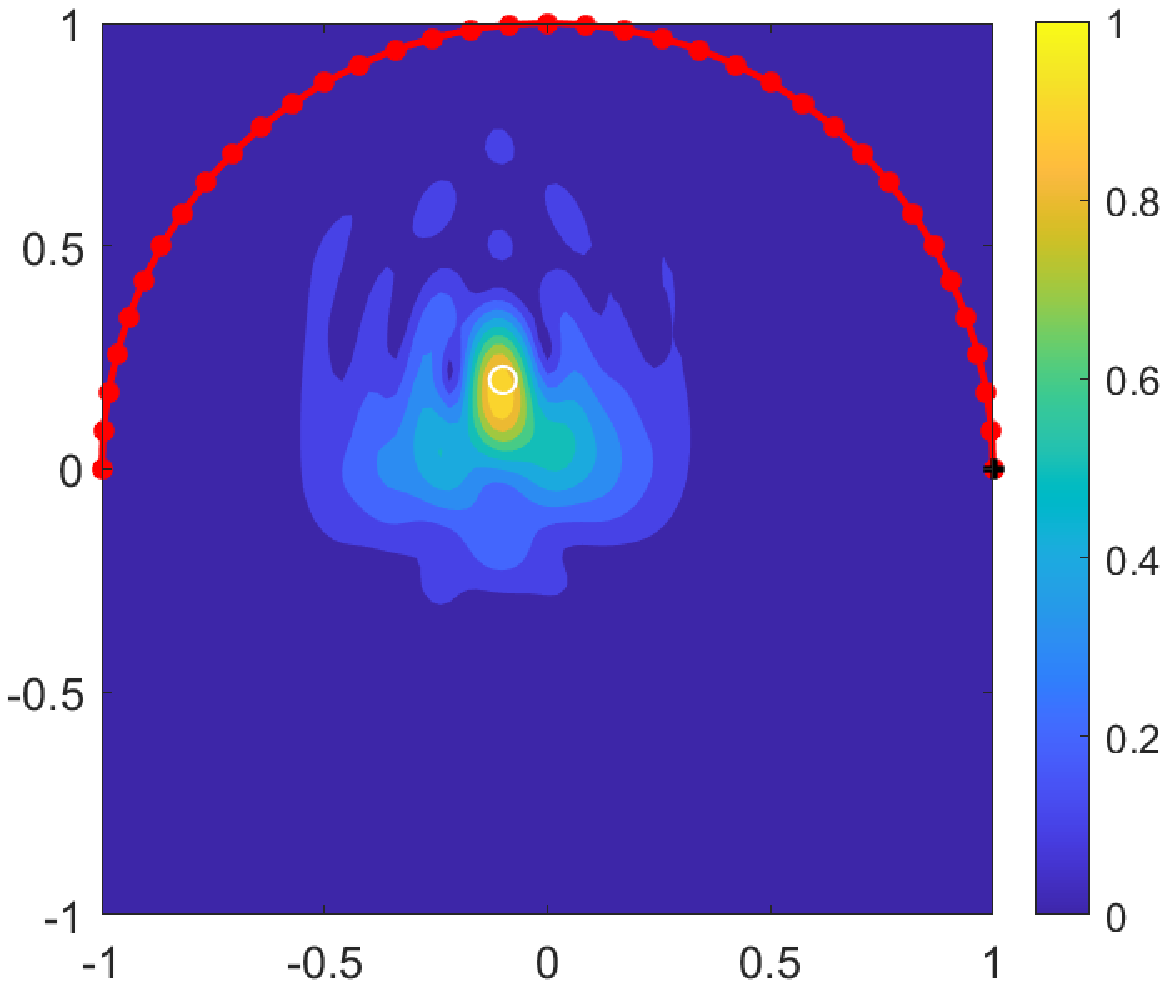}}
	\subfigure[\label{MDSM1-3}$\theta_{1}=0$ and $\theta_{N}=\frac{3}{2}\pi$]{\includegraphics[width=0.32\textwidth]{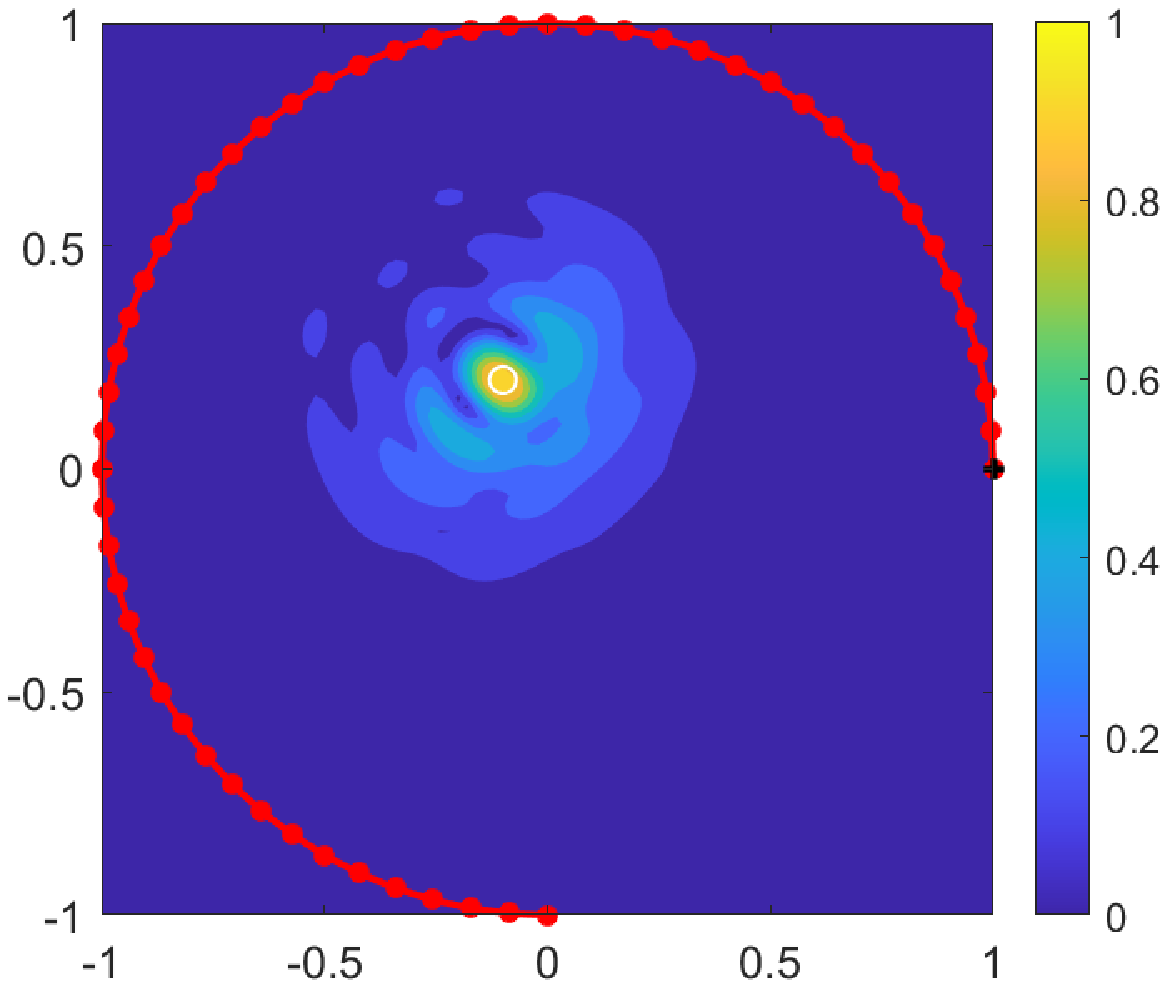}}
	\caption{ DSM (top) and MDSM (bottom) for a single small inhomogeneity. Results are similar to those with the MSM and MDSM, respectively (see Figure \ref{Ex1-MSMResult}).
	}
	\label{Ex1-DSMResult}
	%
	\vskip 1cm
	\centering
	\subfigure[\label{DSM2-1}$\theta_{1}=0$ and $\theta_{N}=\frac{\pi}{2}$]{\includegraphics[width=0.32\textwidth]{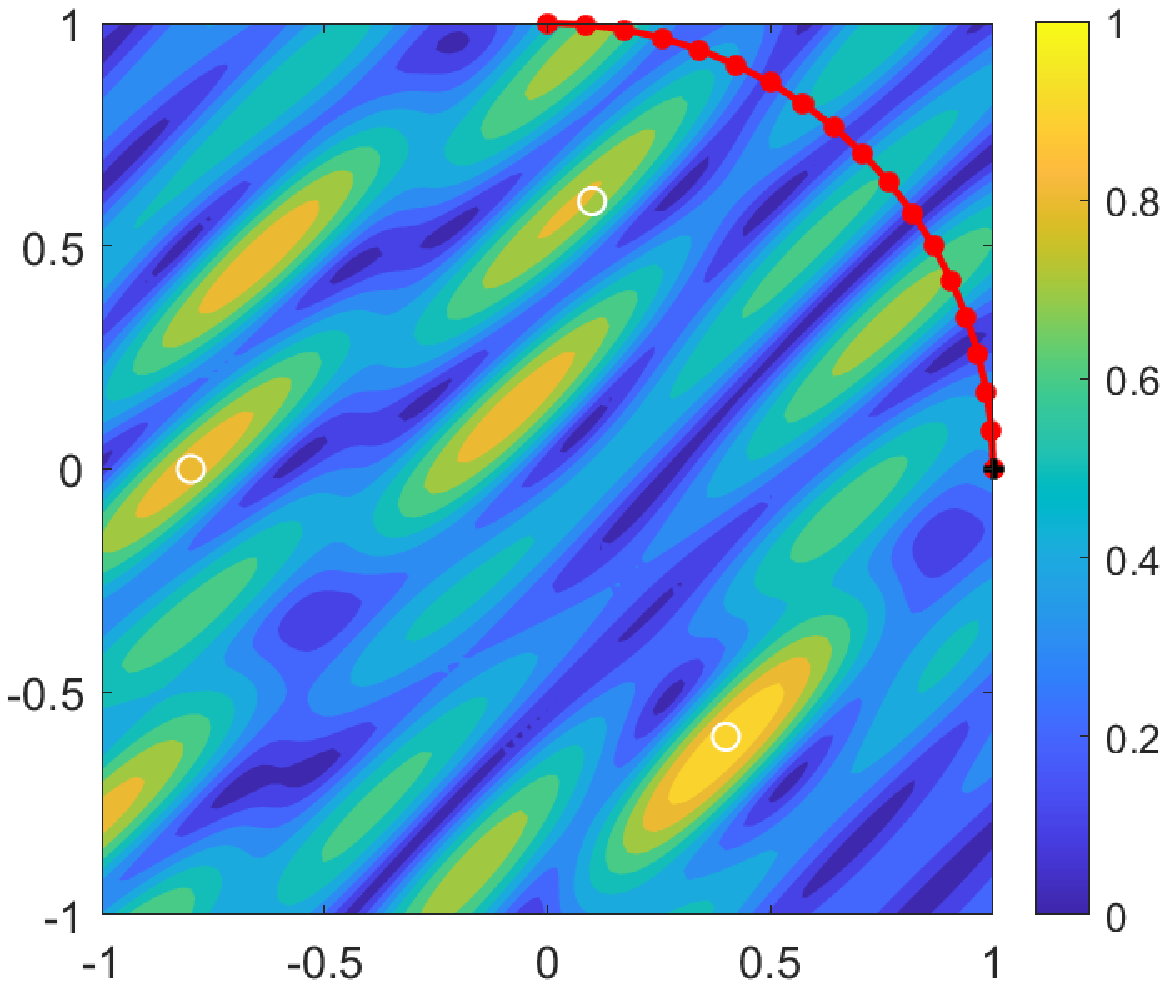}}
	\subfigure[\label{DSM2-2}$\theta_{1}=0$ and $\theta_{N}=\pi$]{\includegraphics[width=0.32\textwidth]{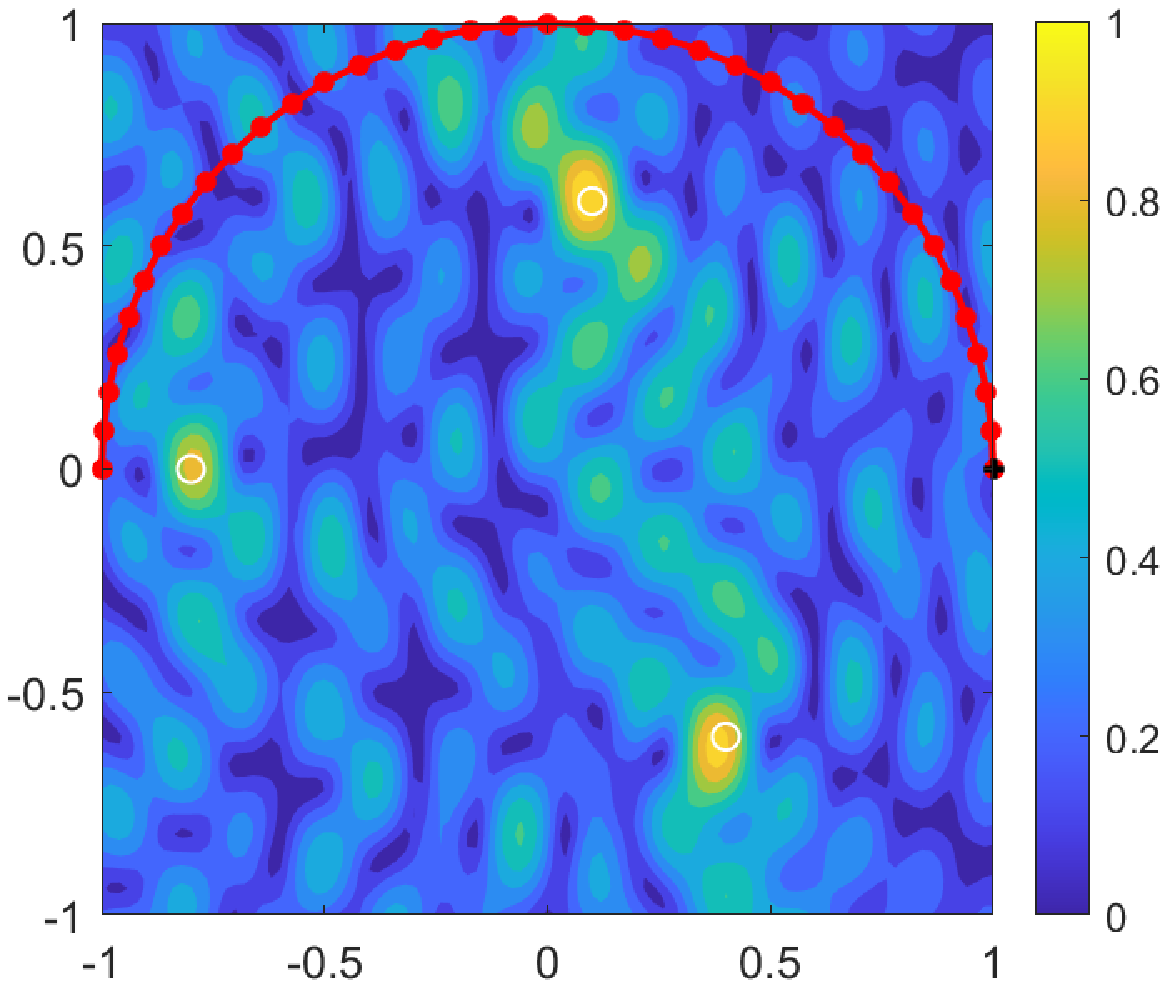}}
	\subfigure[\label{DSM2-3}$\theta_{1}=0$ and $\theta_{N}=\frac{3}{2}\pi$]{\includegraphics[width=0.32\textwidth]{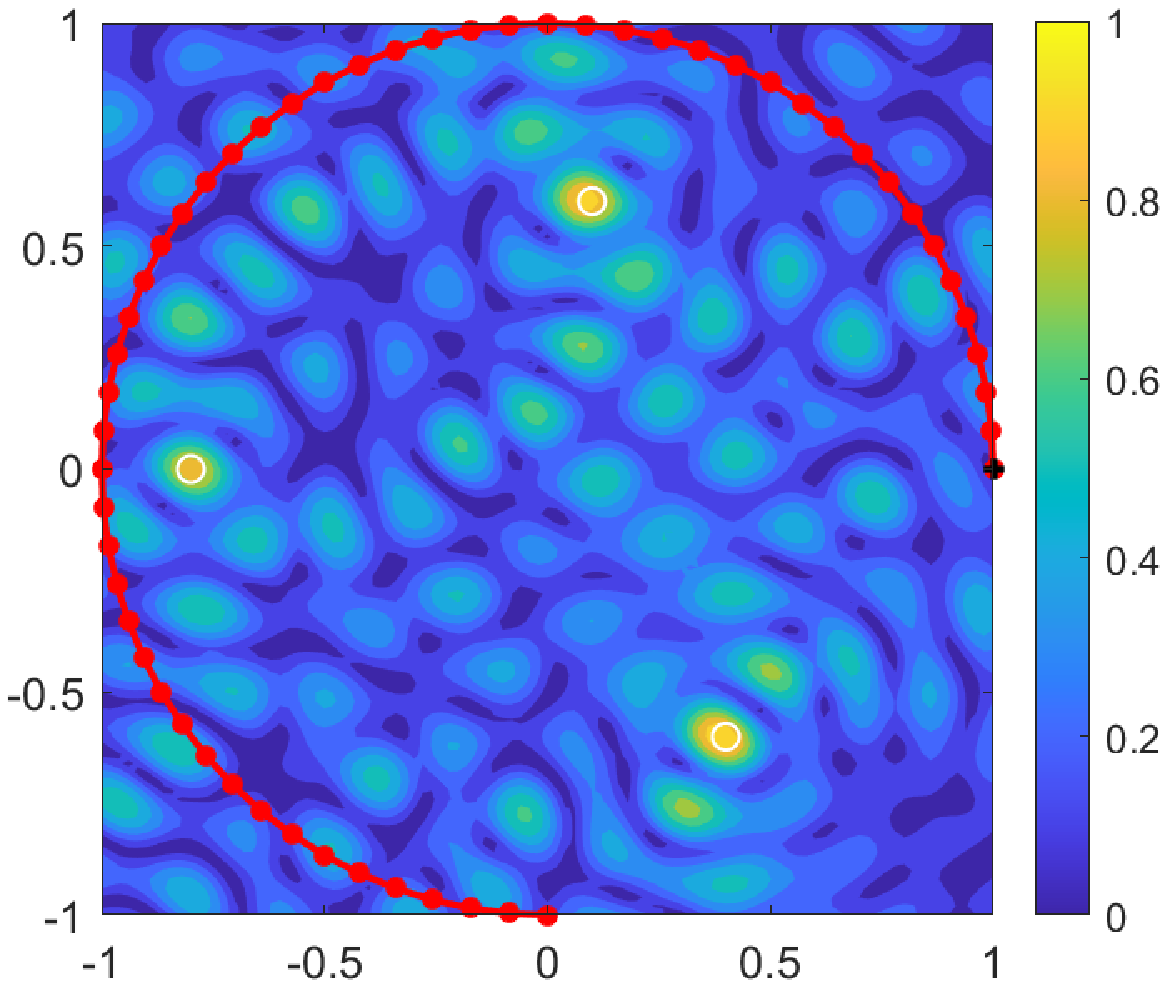}}
	\subfigure[\label{MDSM2-1}$\theta_{1}=0$ and $\theta_{N}=\frac{\pi}{2}$]{\includegraphics[width=0.32\textwidth]{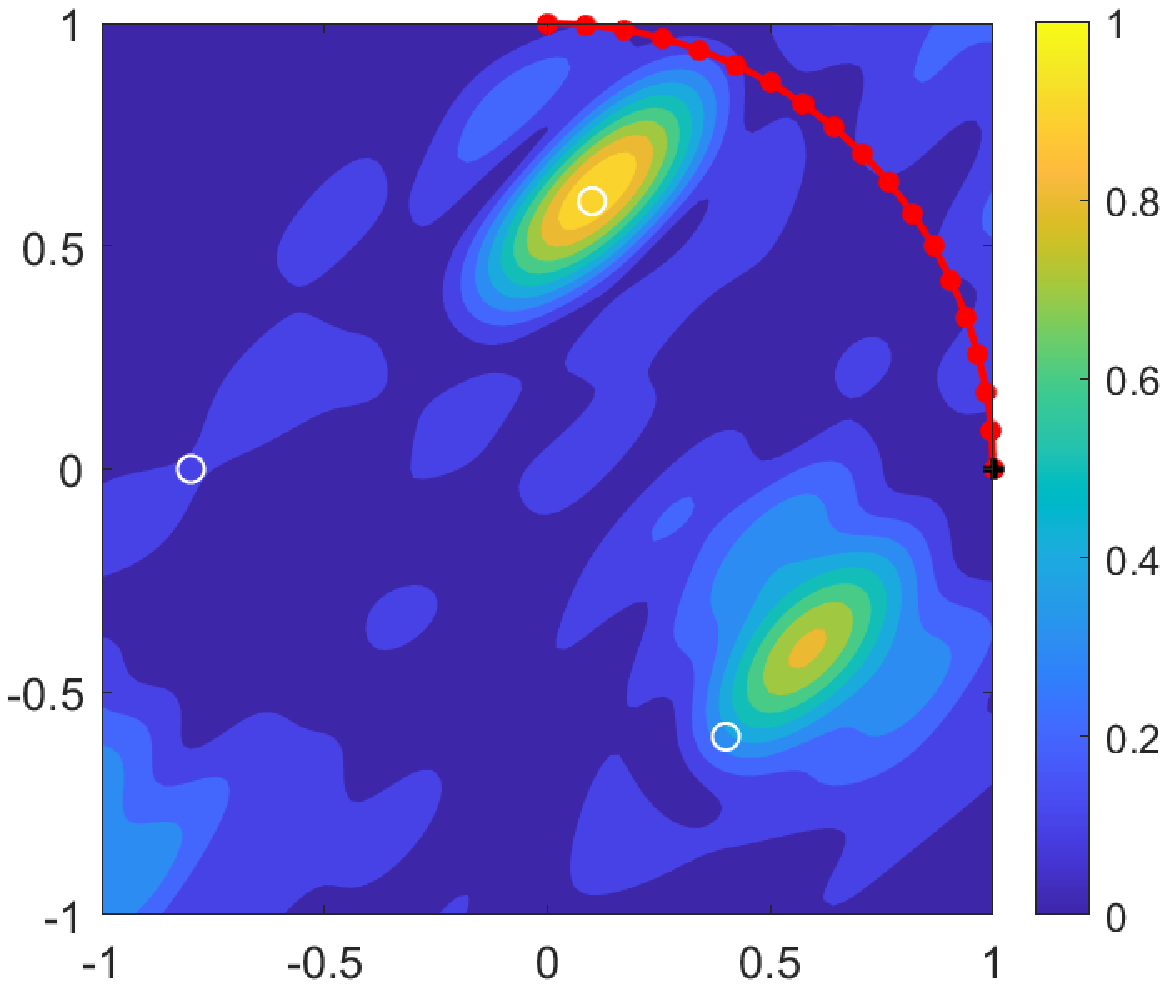}}
	\subfigure[\label{MDSM2-2}$\theta_{1}=0$ and $\theta_{N}=\pi$]{\includegraphics[width=0.32\textwidth]{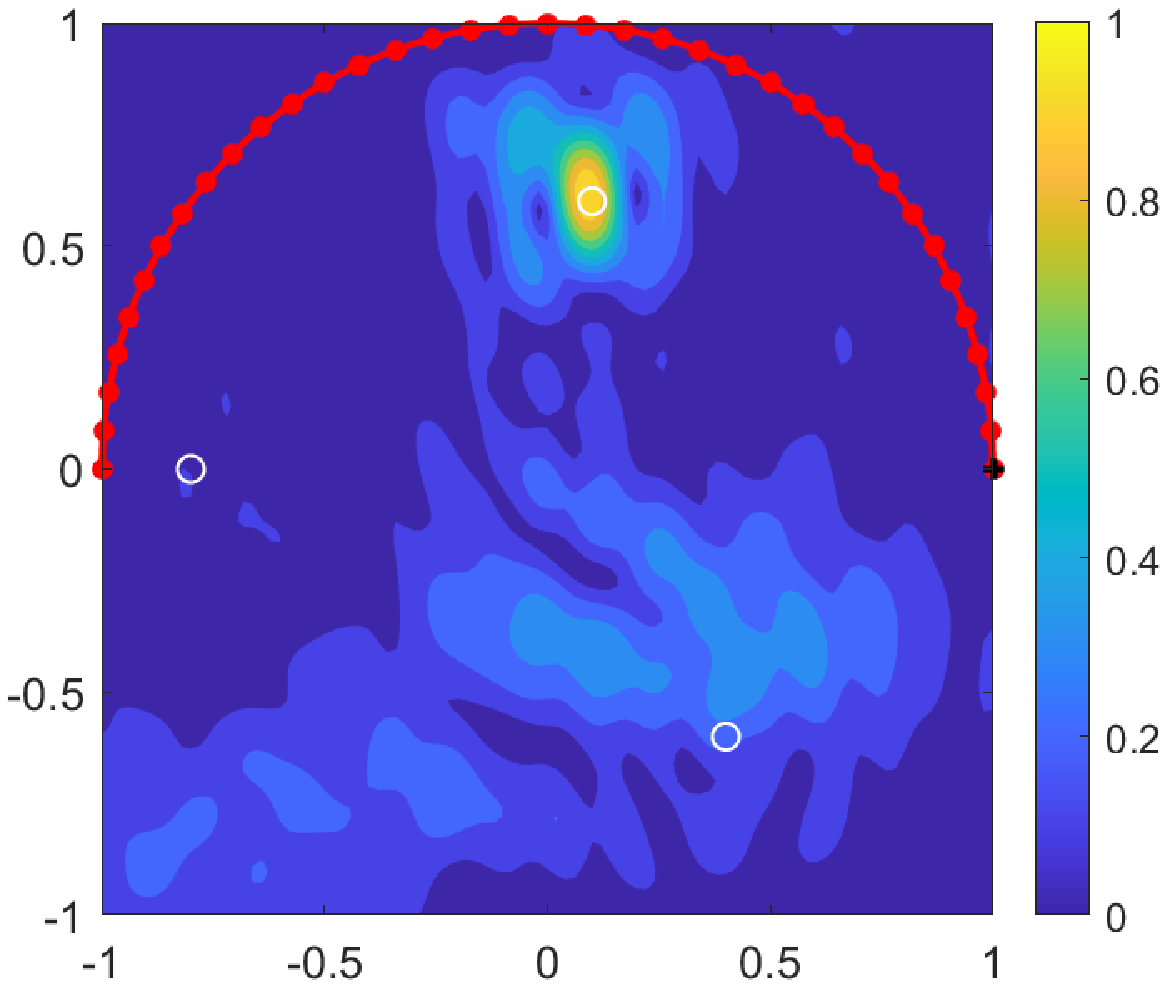}}
	\subfigure[\label{MDSM2-3}$\theta_{1}=0$ and $\theta_{N}=\frac{3}{2}\pi$]{\includegraphics[width=0.32\textwidth]{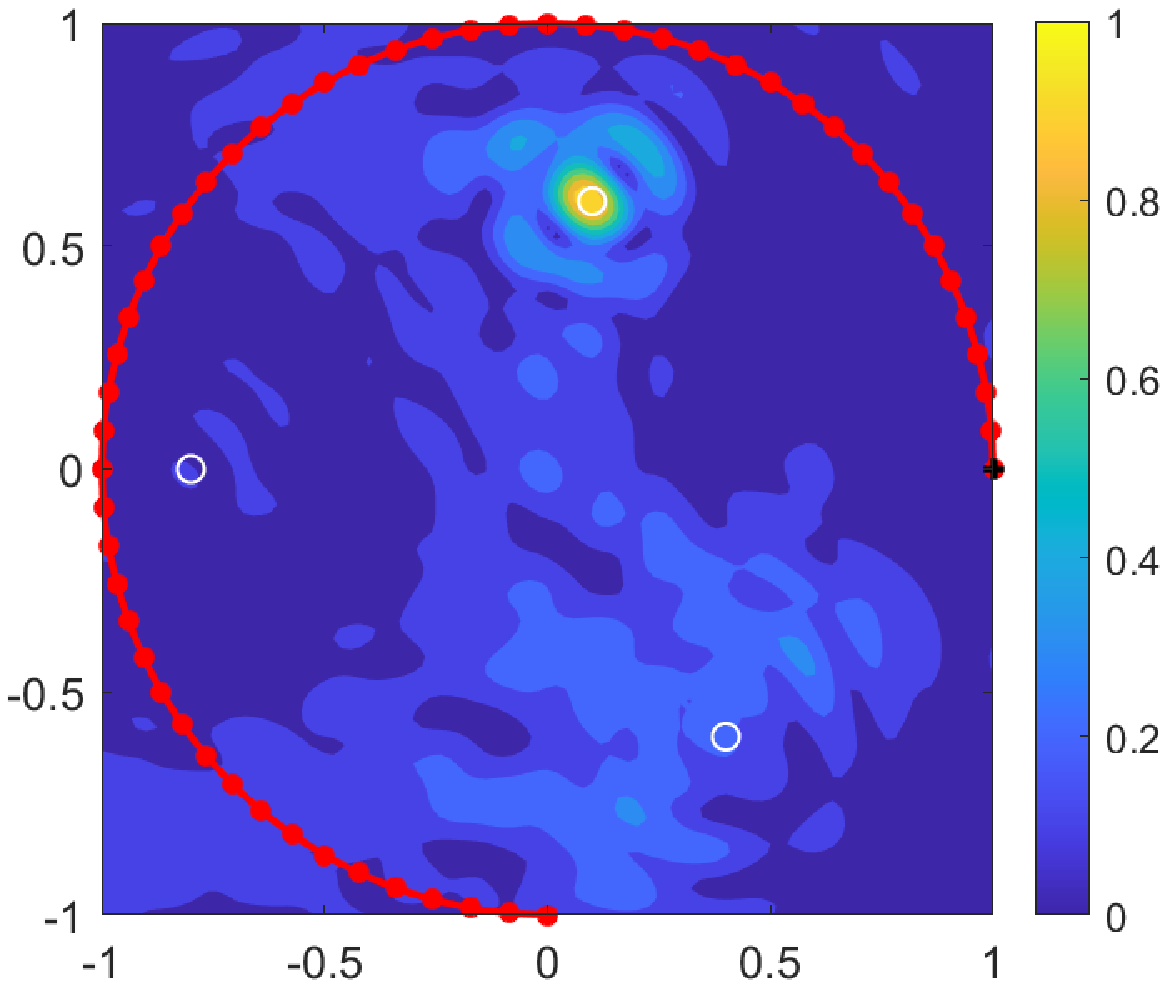}}
	\caption{DSM (top) and MDSM (bottom) for multiple inhomogeneities. The DSM identifies all inhomogeneities with the measurement angle $\geq\pi$, and the MDSM detects some, but not all, inhomogeneities.}
	\label{Ex2-DSMResult}
\end{figure}

\begin{figure}[h!]
	\centering
	\subfigure[\label{DSM3-1}$\theta_{1}=0$ and $\theta_{N}=\frac{\pi}{2}$]{\includegraphics[width=0.32\textwidth]{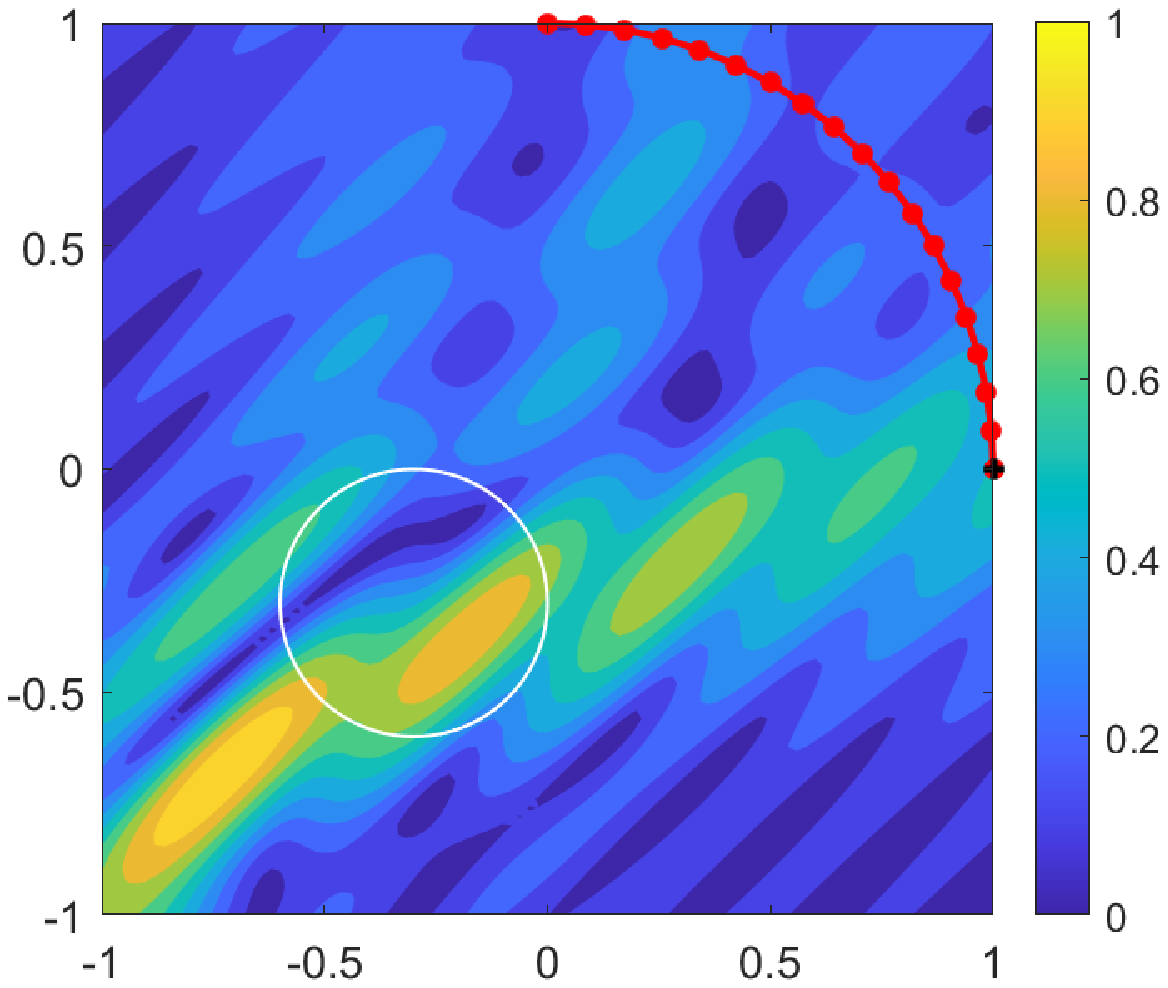}}
	\subfigure[\label{DSM3-2}$\theta_{1}=0$ and $\theta_{N}=\pi$]{\includegraphics[width=0.32\textwidth]{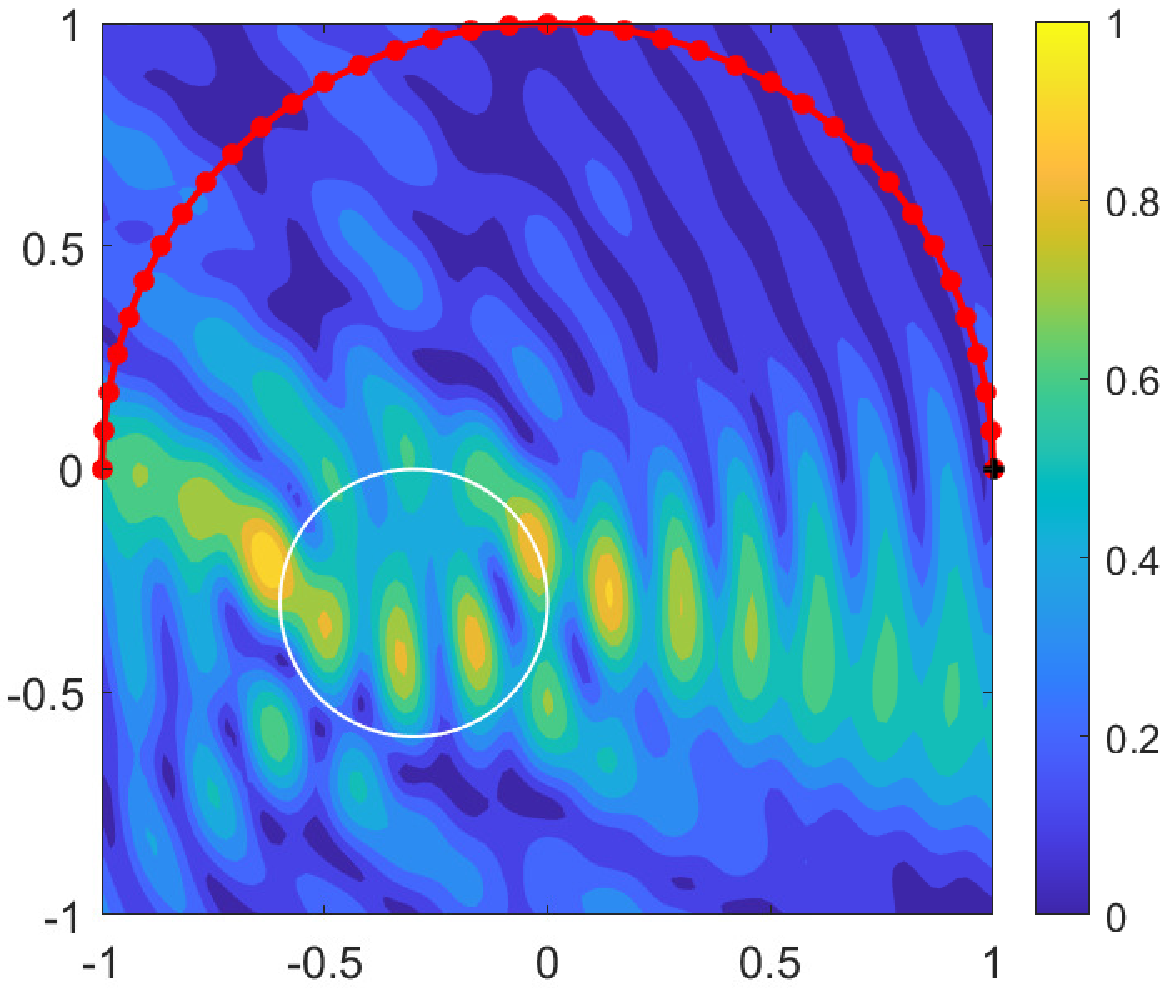}}
	\subfigure[\label{DSM3-3}$\theta_{1}=0$ and $\theta_{N}=\frac{3}{2}\pi$]{\includegraphics[width=0.32\textwidth]{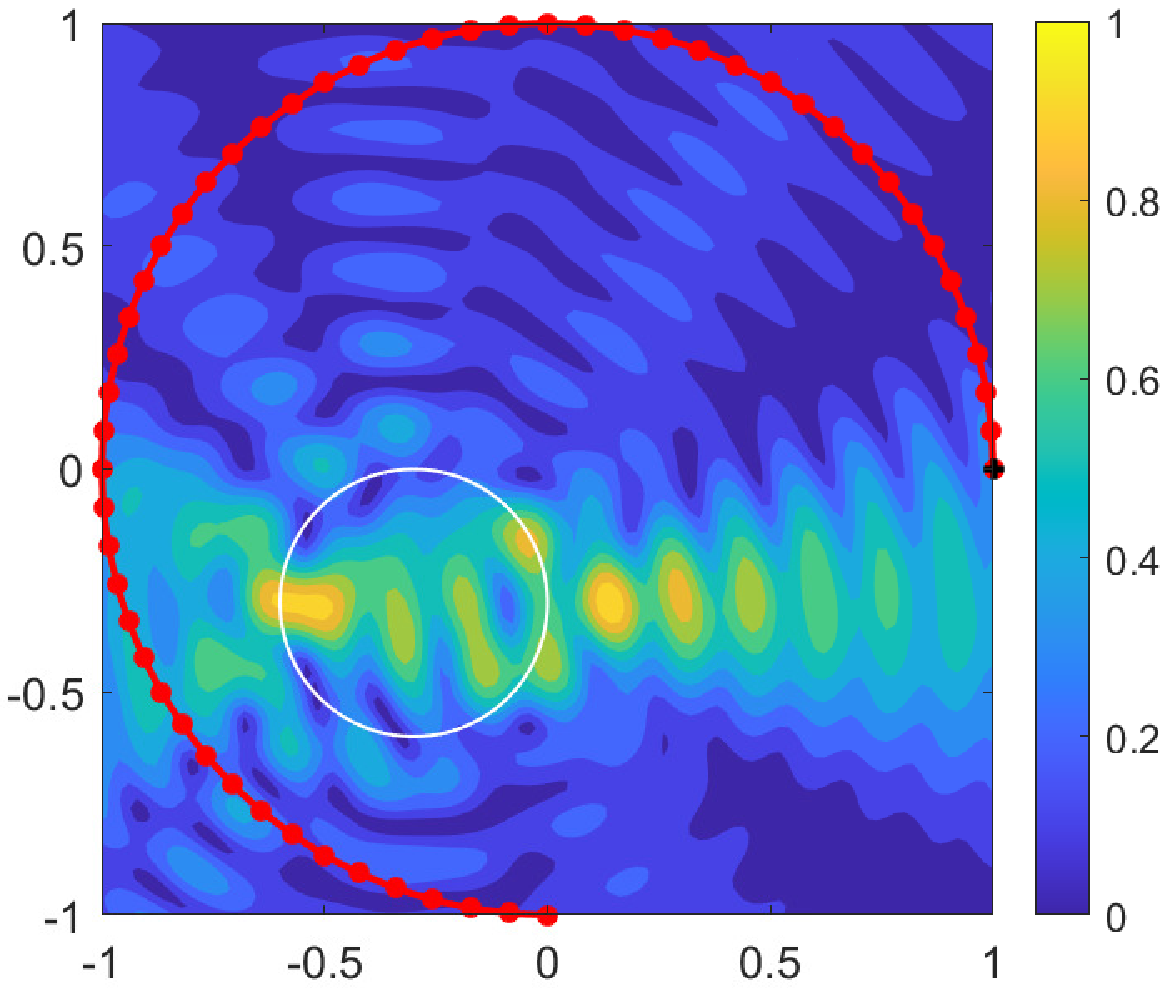}}
	\subfigure[\label{MDSM3-1}$\theta_{1}=0$ and $\theta_{N}=\frac{\pi}{2}$]{\includegraphics[width=0.32\textwidth]{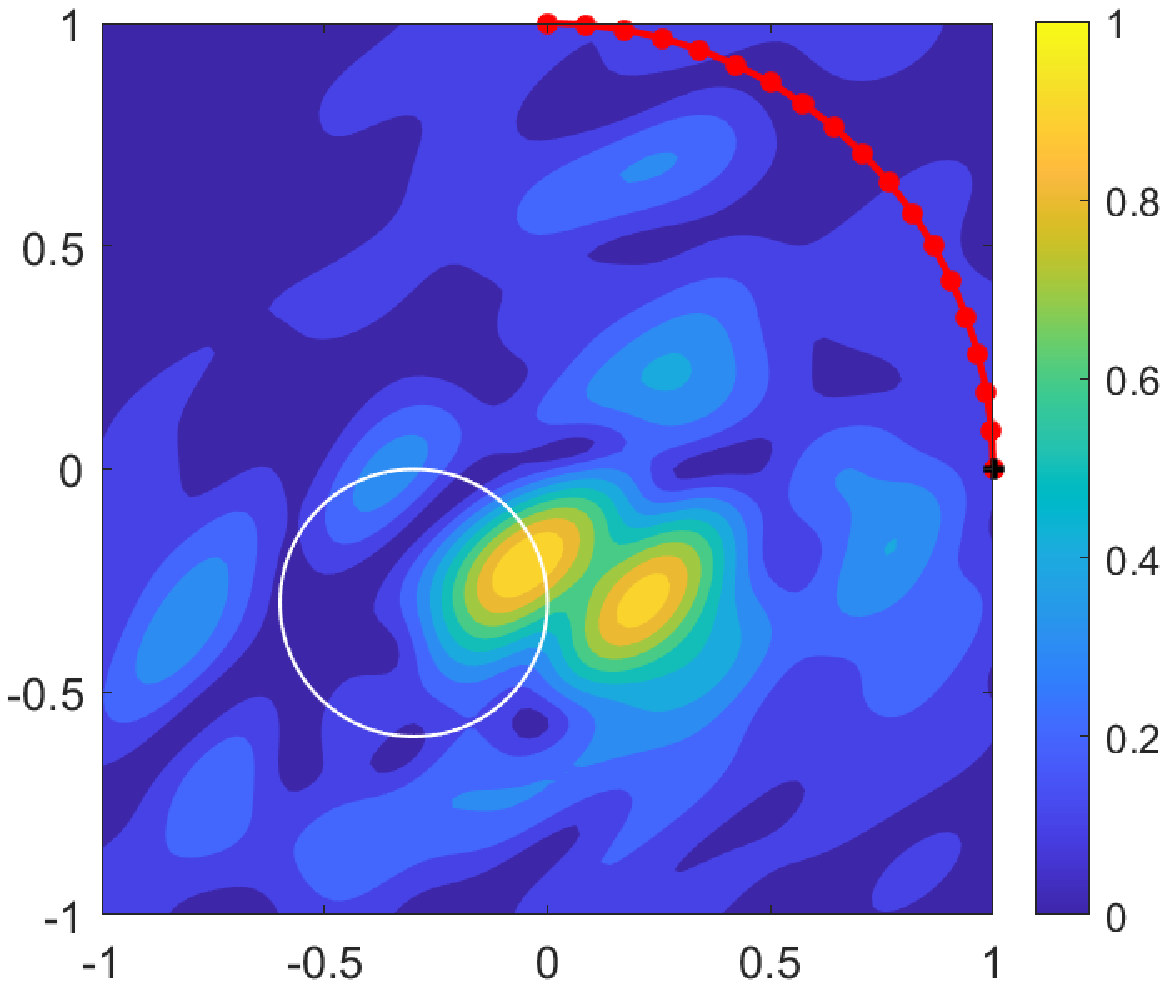}}
	\subfigure[\label{MDSM3-2}$\theta_{1}=0$ and $\theta_{N}=\pi$]{\includegraphics[width=0.32\textwidth]{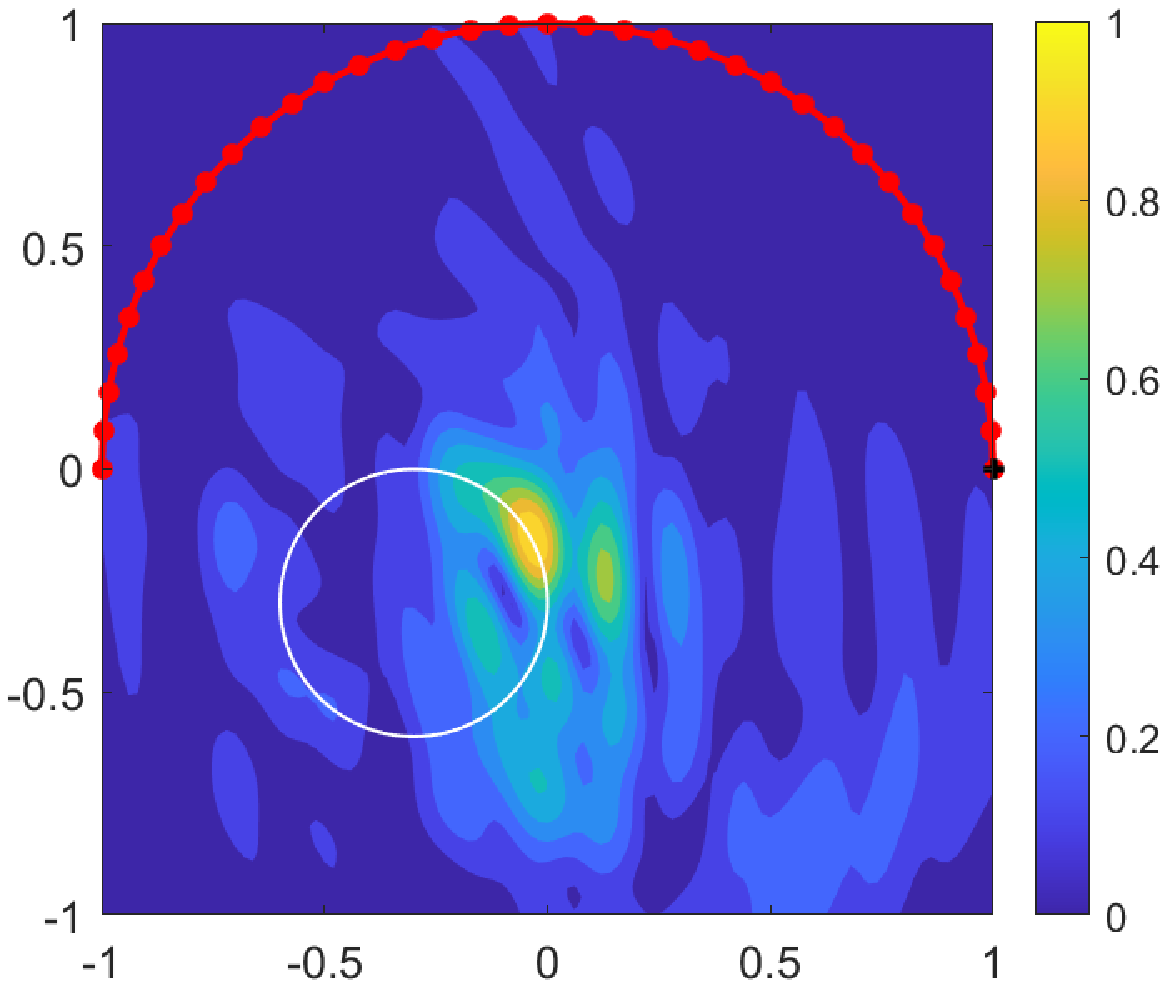}}
	\subfigure[\label{MDSM3-3}$\theta_{1}=0$ and $\theta_{N}=\frac{3}{2}\pi$]{\includegraphics[width=0.32\textwidth]{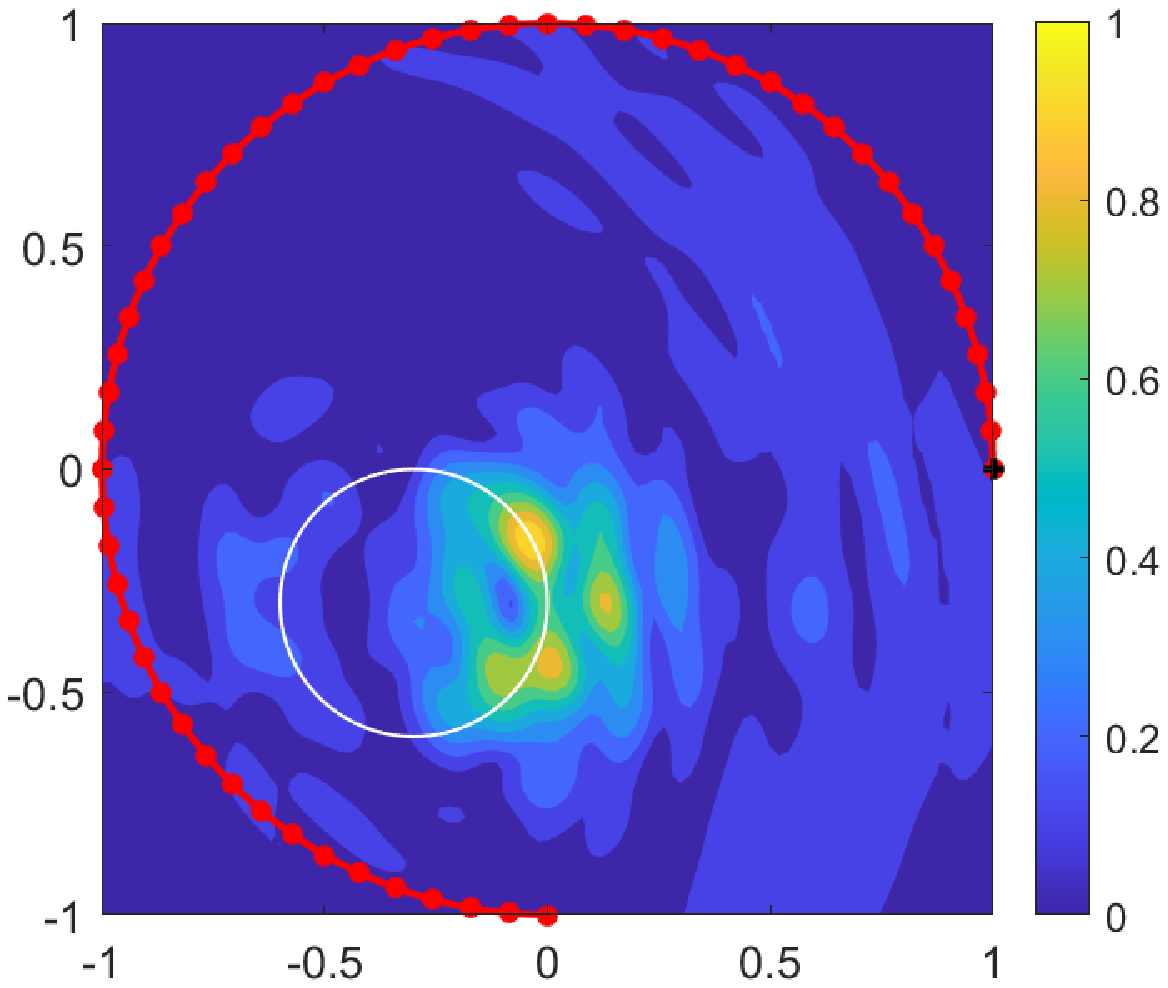}}
	\caption{DSM (top) and MDSM (bottom) for an extended target. The indicator functions of DSM and MDSM does not localize the extended target.}
	\label{Ex3-DSMResult}
\end{figure}


\section{Conclusion}\label{sec:6}
We propose monostatic sampling methods in limited-aperture problems in two dimensions. Thanks to the far-field feature of the scattering problem under the small and well-separated inhomogeneities hypothesis, the asymptotic structures of the proposed sampling methods are identified in terms of the geometric, electromagnetic features of inhomogeneities, the measurement angle of a monostatic system, and the Bessel functions of the first kind and the Struve functions. We compare the obtained asymptotic structures with the results of multistatic DSM.
Numerical simulations validate the theoretical results with noisy synthetic data. 
Interestingly, according to the numerical simulations, for an extended target the proposed imaging scheme with multi-frequency has less accurate imaging performance than the single-frequency one. 
Analysis and improvement of the proposed monostatic sampling methods for extended targets would be of interest in the future study.

	\section*{Acknowledgment}
	This research was supported by Basic Science Research Program through the National Research Foundation of Korea (NRF) funded by the Ministry of Education (NRF-2019R1A6A1A10073887 and NRF-2021R1A2C1011804).


%
	\bibliographystyle{IEEEtran}
	\bibliography{References.bib}

\end{document}